\newtheorem{theorem}{Theorem}[section]
\newtheorem{lemma}[theorem]{Lemma}
\newtheorem{remark}[theorem]{Remark}
\newtheorem{claim}[theorem]{Claim}
\newtheorem{proposition}[theorem]{Proposition}
\newtheorem{conj}[theorem]{Conjecture}
\newtheorem{corollary}[theorem]{Corollary}
\newtheorem{definition}[theorem]{Definition}
\numberwithin{equation}{section}
\newcommand{\A}{{\mathcal{A}}}
\newcommand{\eps}{\varepsilon}
\newcommand{\Balpha}{\mbox{$\hspace{0.1em}\rule[0.01em]{0.05em}{0.39em}\hspace{-0.21em}\alpha$}}
\begin{document}
\title[Minimal two-spheres in three-spheres]{Minimal two-spheres in three-spheres}
\author{Robert Haslhofer}\address{Department of Mathematics\\ University of Toronto\\Toronto, ON M5S 2E4}
\email{roberth@math.toronto.edu}

\author{Daniel Ketover}\address{Department of Mathematics\\Princeton
University\\Princeton, NJ 08544}
 \email{dketover@math.princeton.edu}
\maketitle\thanks

\begin{abstract}
We prove that any manifold diffeomorphic to $\mathbb{S}^3$ and endowed with a generic metric contains at least two embedded minimal two-spheres.  The existence of at least one minimal two-sphere was obtained by Simon-Smith in 1983.  Our approach combines ideas from min-max theory and mean curvature flow.  We also establish the existence of smooth mean convex foliations in three manifolds.   We apply our methods to solve a problem posed by S.T. Yau in 1987 on whether the planar two-spheres are the only minimal spheres in ellipsoids centered about the origin in $\mathbb{R}^4$.   Finally, considering the example of degenerating ellipsoids we show that the assumptions in the multiplicity one conjecture and the equidistribution of widths conjecture are in a certain sense sharp.
\end{abstract}

\section{Introduction}
The min-max method goes back to Birkhoff who in 1917 proved the following theorem:

\begin{theorem}[{G.D. Birkhoff 1917 \cite{Birkhoff}}]\label{birkhoff}
Any closed Riemannian two-sphere contains at least one closed geodesic. 
\end{theorem}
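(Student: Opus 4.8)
The plan is to run a one-parameter min-max scheme over sweepouts of $\bS^2$ by closed curves, combined with Birkhoff's curve-shortening process, to produce a nonconstant closed geodesic. Since $\bS^2$ is simply connected one cannot simply minimize length in a nontrivial free homotopy class, so a genuine min-max argument is needed. First I would fix the class of \emph{sweepouts}: continuous families $\{\gamma_t\}_{t\in[0,1]}$ of closed (say piecewise $C^1$) curves on $(\bS^2,g)$ with $\gamma_0$ and $\gamma_1$ constant curves, such that the family is topologically nontrivial, i.e.\ the associated map $[0,1]\times\bS^1\to\bS^2$ has degree one. Set the \emph{width}
\[
W=\inf_{\{\gamma_t\}}\ \sup_{t\in[0,1]} L(\gamma_t),
\]
the infimum over all sweepouts, where $L$ denotes length. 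The theorem will follow once we show $0<W<\infty$ and that $W$ is realized by a smooth closed geodesic.

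The finiteness $W<\infty$ is immediate by exhibiting one sweepout (e.g.\ latitude circles). The key point is the strict positivity $W>0$, which is where the topology of $\bS^2$ enters: by the degree-one property, a standard argument shows that in any sweepout there is a parameter $t_\ast$ for which $\gamma_{t_\ast}$ encloses a region of area exactly $\tfrac12\,\mathrm{Area}(\bS^2,g)$ on each side; by the isoperimetric inequality on $(\bS^2,g)$ this curve has length at least $\ell_0$, the value of the isoperimetric profile at half the total area, a positive constant depending only on $(\bS^2,g)$ and not on the sweepout. Hence $W\ge\ell_0>0$.

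Next I would carry out the variational step. Take a minimizing sequence of sweepouts $\{\gamma_t^n\}$ with $\sup_t L(\gamma_t^n)\to W$, and apply (iteratively) Birkhoff's curve-shortening map $\Phi$: replace a curve by the geodesic polygon through its values on a fine equidistributed mesh, alternating the mesh, and iterate. The map $\Phi$ is continuous on the space of curves, is length nonincreasing, preserves the sweepout class, fixes closed geodesics, and moves any curve that is far from being a geodesic a definite amount closer to one. Applying $\Phi$ and a diagonal/pull-tight argument produces parameters $t_n$ and curves $\sigma_n$ with $L(\sigma_n)\to W$ that are asymptotically fixed by $\Phi$. Since $\ell_0\le L(\sigma_n)\le W+1$, after reparametrizing by arclength Arzel\`a--Ascoli gives a subsequential $C^0$ limit $\gamma_\infty$, a closed curve of length $W$; the curve-shortening estimates upgrade this to $C^\infty$ convergence to a closed geodesic $\gamma_\infty$, and $L(\gamma_\infty)=W\ge\ell_0>0$ guarantees $\gamma_\infty$ is nonconstant.

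The main obstacle is this last step: making the shortening/pull-tight argument rigorous so that the min-max value is actually \emph{attained} by an honest smooth closed geodesic rather than being lost to oscillation of the min-max sequence or to a curve pinching off a point. The positivity $W\ge\ell_0>0$ established above is precisely what excludes degeneration to a constant, and the smoothness of the limit is forced by the regularizing properties of Birkhoff's shortening operator (equivalently, an arclength-parametrized limit that is stationary under the process satisfies the geodesic equation). Everything else — finiteness of $W$, continuity and length-monotonicity of $\Phi$, the compactness after arclength reparametrization — is routine.
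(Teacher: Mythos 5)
Your proposal is correct and follows essentially the same route the paper indicates for this classical result: the paper only cites Birkhoff and sketches exactly this strategy (one-parameter sweepouts of $\mathbb{S}^2$ by closed curves, a positive width, and a pull-tight/curve-shortening argument showing the maximal slice converges to a nonconstant closed geodesic). Your filling-in of the details — degree-one sweepouts, positivity of the width via an isoperimetric-type bound, and Birkhoff's shortening map with Arzel\`a--Ascoli compactness — is the standard rigorous implementation of that same approach.
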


The difficulty in proving Theorem \ref{birkhoff} is that a two-sphere is simply connected, and thus minimization methods to produce minimizing critical points for the length functional do not work.

Birkhoff's idea was to use Morse theoretic arguments to produce higher index (i.e. unstable) geodesics.  Loosely speaking, he considered sweep-outs of the two-sphere by closed curves, and argued that the longest slice in a sweep-out that is ``pulled tight" is a closed geodesic.   Birkhoff left open the question of whether the geodesic produced by Theorem \ref{birkhoff} is embedded (i.e. does not have self-intersections).   Using heat flow methods, Grayson \cite{Grayson} later proved the existence of an embedded closed geodesic in the setting of Theorem \ref{birkhoff}.

Birkhoff considered one parameter families of curves to produce his closed geodesic.  There are also higher non-trivial families of curves one can consider to produce more geodesics.  Lusternik and Schnirelmann later used these families to prove:

\begin{theorem}[{L. Lusternik and L. Schnirelmann 1947 \cite{LS}}]
Any closed Riemannian two-sphere contains at least $3$ simple closed geodesics. 
\end{theorem}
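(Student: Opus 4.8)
The plan is to run a Lusternik--Schnirelmann min-max scheme for the length functional $L$ over a suitable space of closed curves on $(\bS^2,g)$, using a curve-shortening process that preserves embeddedness so that the resulting geodesics are genuinely simple. The key topological input is that, although $\pi_1(\bS^2)=0$, closed curves sweep out $\bS^2$ in a topologically nontrivial way: the sweepouts one must consider detect the $\mathbb{Z}/2$-cohomology of $\mathbb{RP}^2$ --- heuristically because the unoriented great circles of the round metric are parametrized by $\mathbb{RP}^2$ --- and $H^*(\mathbb{RP}^2;\mathbb{Z}/2)$ has cup-length $2$, hence Lusternik--Schnirelmann category $3$. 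Carrying out the standard cohomological min-max over such families produces three values $0<W_1\le W_2\le W_3$, where $W_k$ is obtained by infimizing $\sup_x L\bigl(h(x)\bigr)$ over families $h$ lying in the $k$-th level of the associated filtration. Positivity of all three values relies on the elementary fact that a family of curves each shorter than the injectivity radius is topologically trivial, since it contracts into the set of point curves.

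The next step is to show that each $W_k$ is the length of a simple closed geodesic. I would do this by combining a standard deformation lemma with a curve-shortening deformation of the curve space built from Grayson's curve shortening flow --- the one-dimensional mean curvature flow: an embedded closed curve evolved under this flow either collapses to a point or, after rescaling, converges to a simple closed geodesic, and embeddedness is preserved along the way. If some $W_k>0$ were not attained by a closed geodesic, the deformation would push every competitor family below the level $W_k-\eps$, contradicting the definition of $W_k$. This identifies each $W_k$ with a closed geodesic length, and running the whole scheme within the space of \emph{embedded} curves (which Grayson's flow respects) guarantees the geodesics produced are simple.

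If $W_1<W_2<W_3$ we obtain three distinct simple closed geodesics and are done, so the main obstacle is the degenerate case in which two of the widths coincide, say $W_k=W_{k+1}$. Here the classical Lusternik--Schnirelmann argument shows there are then infinitely many closed geodesics of that common length, so again there are at least three (this is exactly what happens for the round metric, where all great circles have length $2\pi$). A further delicate point is to rule out that a min-max limit curve degenerates to a point, a multiply covered geodesic, or a non-embedded curve such as a figure-eight; this is precisely why the scheme should be run with Grayson's embeddedness-preserving flow rather than Birkhoff's curve shortening process, as this confines the entire construction to embedded curves. Assembling these ingredients yields at least three simple closed geodesics on $(\bS^2,g)$.
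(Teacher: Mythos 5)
The paper does not actually prove this statement --- it is quoted as a classical theorem of Lusternik--Schnirelmann, with simplicity of the geodesics attributed to Grayson --- so the only thing to assess is whether your min-max scheme is sound, and there is a genuine gap in its topological input: you run the Lusternik--Schnirelmann count on a space with the cohomology of $\mathbb{RP}^2$ (cup-length $2$, category $3$) and then claim three \emph{positive} widths $W_1\le W_2\le W_3$. With cup-length $2$ you only have the two nontrivial classes $\alpha,\alpha^2$, hence only two min-max values bounded below by a positive constant; the third critical point promised by ``category $3$'' is the absolute minimum of the length functional, which is a point curve of length zero and yields no geodesic. So the scheme as written guarantees at most two simple closed geodesics. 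The space of embedded circles in $\mathbb{S}^2$ is indeed homotopy equivalent to $\mathbb{RP}^2$ (by Smale's theorem it retracts onto the round circles, an interval bundle over the great circles), but the correct parameter space for the min-max is this space \emph{together with the point curves, with the point curves collapsed to a single point}; that quotient is homotopy equivalent to $\mathbb{RP}^3$, whose $\mathbb{Z}_2$-cohomology has cup-length $3$, and it is the classes $\alpha,\alpha^2,\alpha^3$ that give three positive widths and hence three geodesics. This is exactly the one-dimension-lower analogue of the count the paper makes for two-spheres in $\mathbb{S}^3$, where $\mathcal{S}\simeq\mathcal{G}/\partial\mathcal{G}\simeq\mathbb{RP}^4$ has cup-length $4$ and the trivial point minimum is discarded, leaving $4$ expected minimal spheres; in your setting the third, three-parameter family (centers and radii of circles, i.e.\ the full sweep-out of $\mathbb{S}^2$ by circles) is precisely what your $\mathbb{RP}^2$-based count omits.

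The remaining architecture of your argument is the standard and correct one --- positivity of the lowest width via the injectivity-radius argument, realization of each width by a critical point via a deformation lemma built from Grayson's embeddedness-preserving curve shortening flow (which is also how simplicity, left open by Lusternik--Schnirelmann, is handled), and the ``equal widths imply infinitely many geodesics'' dichotomy --- but it only delivers the theorem once the cohomological input is corrected to the three classes of $\mathbb{RP}^3$.
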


In the work of Lusternik and Schnirelmann, it was not clear if the geodesics obtained are indeed simple (embedded).  This was later shown to be true by Grayson \cite{Grayson} using his heat flow methods.  

In one higher dimension, one can consider sweep-outs of three-spheres by two-spheres, and hope to produce an embedded minimal two-sphere.  In 1983, Simon and Smith carried this out (adapting the more general min-max theory of Almgren and Pitts to the case of surfaces with fixed topology) and proved:

\begin{theorem}[L. Simon and F. Smith 1983 \cite{SS}]\label{simonsmith}
Let $M$ be a Riemannian three-manifold diffeomorphic to $\mathbb{S}^3$.  Then $M$ contains an embedded minimal two-sphere.
\end{theorem}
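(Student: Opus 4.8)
The plan is to run a min-max procedure over the space of sweep-outs of $M$ by two-spheres, in the spirit of Birkhoff's argument but one dimension higher. First I would fix a smooth Morse function $f\colon M\to[0,1]$ with exactly one minimum and one maximum; since $M$ is diffeomorphic to $\mathbb{S}^3$ such an $f$ exists and all of its regular level sets are embedded two-spheres. Let $\Lambda$ denote the set of all families $\{\Sigma_t\}_{t\in[0,1]}$ obtained from $\{f^{-1}(t)\}$ by ambient isotopy, where the endpoint slices and finitely many intermediate slices are allowed to degenerate to points or to transverse unions of two-spheres, and where $t\mapsto\Sigma_t$ varies continuously (say in the Hausdorff topology, with $t\mapsto\mathcal{H}^2(\Sigma_t)$ continuous). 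Define the width
\[
W \;=\; \inf_{\{\Sigma_t\}\in\Lambda}\ \sup_{t\in[0,1]}\ \mathcal{H}^2(\Sigma_t).
\]
The key preliminary point is that $W>0$: every sweep-out contains a slice $\Sigma_{t_0}$ bounding a region of volume exactly $\tfrac12\mathrm{Vol}(M)$, and the isoperimetric inequality on the compact manifold $M$ forces $\mathcal{H}^2(\Sigma_{t_0})\ge c(M)>0$.

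Next I would extract a critical sequence and pull it tight. Choose a minimizing sequence $\{\Sigma^j_t\}\subset\Lambda$ with $\sup_t\mathcal{H}^2(\Sigma^j_t)\to W$. Using a one-parameter family of ambient deformations generated by suitable vector fields --- decreasing area most efficiently on surfaces whose first variation is large --- one can modify the sequence so that any sequence of slices $\Sigma^j_{t_j}$ with $\mathcal{H}^2(\Sigma^j_{t_j})\to W$ converges, after passing to a subsequence, as varifolds to a stationary integral varifold $V$ with $\|V\|(M)=W$. In addition, by a combinatorial argument on dyadic subdivisions of the parameter interval in the style of Almgren--Pitts, the sequence can be arranged so that $V$ is \emph{almost minimizing in small annuli}: on every sufficiently small annulus one cannot decrease the area of the competing slices by more than $\eps$ through admissible deformations. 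The point where the Simon--Smith refinement enters is that all competitor surfaces may be taken to be smooth and, crucially, the genus of the slices is not increased by these deformations, so no topology is created in the limit.

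Finally I would upgrade $V$ to a smooth embedded minimal two-sphere. Combining stationarity with the almost-minimizing property and a replacement/blow-up argument shows that on each small ball $V$ agrees with a smooth, stable, embedded minimal surface; patching these local pieces yields that $\mathrm{spt}\,V$ is a smooth closed embedded minimal surface (possibly with integer multiplicities), so that $W=\sum_i m_i\,\mathcal{H}^2(\Sigma_i)$ with each $\Sigma_i$ a component of $\mathrm{spt}\,V$. It remains to control the topology: because the slices of a sweep-out in $\Lambda$ are two-spheres, a genus comparison (tracking the genus of the nearby slices through the varifold convergence) shows that every component $\Sigma_i$ is itself a two-sphere, giving the desired embedded minimal $\mathbb{S}^2$. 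I expect the main obstacle to be precisely this last stage --- the interior regularity together with the accompanying topological bookkeeping: establishing that the almost-minimizing stationary varifold is genuinely a \emph{smooth embedded} surface (rather than merely a stationary varifold with singularities or with nonintegral density), and simultaneously propagating the spherical topology of the slices to the limit rather than to a surface of higher genus.
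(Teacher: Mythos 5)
Theorem \ref{simonsmith} is not proved in this paper at all; it is quoted from Simon--Smith \cite{SS}, and your sketch reproduces exactly their min-max strategy (sweep-outs by spheres, positive width via the isoperimetric inequality, pull-tight to a stationary varifold, almost minimizing in annuli, replacement-based regularity, and topological control of the limit). Your outline is correct, and you rightly flag the last stage --- regularity together with propagating the spherical topology to the limit --- as the genuinely hard part, which is precisely where Smith's thesis (and, for general genus, the later work of De Lellis--Pellandini and Ketover \cite{K}) does the heavy lifting.
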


In analogy with the case of simple closed geodesics on two-spheres, there are higher parameter families of two-spheres on three-spheres that one can consider and hope to produce additional minimal two-spheres.  Let us briefly describe these families.  Let $\mathcal{G}$ denote the space of (possibly degenerate) geodesic spheres in round $\mathbb{S}^3$ and let $\mathcal{P}\subset\mathcal{G}$ denote the subset consisting of great spheres.  Note that $\mathcal{P}$ is homeomorphic to $\mathbb{RP}^3$.  There's a natural map
\begin{equation}
\pi: \mathcal{G}\rightarrow\mathcal{P},
\end{equation}
sending a point in $\mathcal{G}$ to the great sphere parallel to it.   Thus $\mathcal{G}$ is an interval bundle over $\mathcal{P}$.

Recall that $H_i(\mathcal{P},\mathbb{Z}_2)=\mathbb{Z}_2$ for $i=0,1,2,3$, and let $a_i$ be a generator of $H_i(\mathcal{P},\mathbb{Z}_2)=\mathbb{Z}_2$.  Then the families $\pi^{-1}(a_0),\pi^{-1}(a_1),\pi^{-1}(a_2),\pi^{-1}(a_3)$ give four non-trivial sweep-outs of $\mathbb{S}^3$.  One might hope that each of the four families produces via min-max a distinct minimal two-sphere.\footnote{In fact, White exhibited certain three-spheres (Theorem 4.5 in \cite{White_posricci}) containing precisely $4$ embedded minimal two-spheres.} The major difficulty is the phenomenon of \emph{multiplicity} in min-max theory.  Namely, it could happen that the min-max spheres associated with the second, third and fourth family,  just give the sphere associated to the first family counted with higher integer multiplicities. 

To explain the min-max approach in somewhat more detail, recall that by Hatcher's solution of the Smale conjecture the space of embedded two-spheres in $S^3$ together with trivial two-spheres consisting of a point deformation retracts onto $\mathcal{G}/\partial\mathcal{G}\simeq \mathbb{RP}^4$.  We can consider the area functional on this space, and a minimal sphere is precisely a critical point of this functional. The central theorem of Lusternik-Schnirelmann theory (see e.g. Theorem 1.15 in \cite{C}) says that the number of critical points of a smooth real-valued function defined on a manifold $M$ is bounded from below by one plus the maximal cup-length (with respect to some ring) of the cohomology ring of $M$.  Since the cohomology ring of $\mathbb{RP}^4$ with $\mathbb{Z}_2$ coefficients has cup-length $4$, it follows that one expects $5$ critical points of the area functional (in other words, minimal surfaces).  Since the absolute minimizer of the area functional has zero area and is achieved by a point, Lusternik-Schnirelmann theory thus predicts the existence of $4$ non-trivial minimal two-spheres.  Again, the issue in making this sketch rigorous is that from the point of view of min-max theory, minimal spheres with differing integer multiplicities count as distinct critical points of the area functional and thus may not be geometrically distinct.

The problem of bounding the number of critical points in terms of a topological count is in some ways a Riemannian analog to the Arnol'd conjecture in symplectic geometry.  \\

Let us summarize some progress on the problem of finding minimal two-spheres (besides the one from Simon-Smith) in three-spheres:

Using degree methods, i.e. an approach different from min-max theory, White \cite{White_posricci} proved the existence of at least $2$ minimal two-spheres in the situation that $M$ has positive Ricci curvature. 

More recently, Marques-Neves \cite{MN} applied Lusternik-Schnirelmann theory to obtain the existence of at least $4$ embedded minimal surfaces in any three-manifold.  Assuming positive Ricci curvature, they obtain infinitely many minimal surfaces as conjectured by Yau. In either case, nothing is known about the topological type of their minimal surfaces, i.e. their proof does not establish the existence of any embedded minimal two-sphere besides the one from Simon-Smith.\\

In this paper, using combined efforts from min-max theory and mean curvature flow we prove the existence of at least one additional embedded minimal two-sphere besides the one from Simon-Smith:
\begin{theorem}[Existence of minimal two-spheres] \label{main}
Let $M$ be a three-manifold diffeomorphic to $\mathbb{S}^3$ and endowed with a bumpy metric.
Then $M$ contains at least $2$ embedded minimal two-spheres. More precisely, exactly one of the following alternatives holds:
\begin{enumerate}
\item $M$ contains at least $1$ stable embedded minimal two-sphere, and at least $2$ embedded minimal two-spheres of index one.
\item $M$ contains no stable embedded minimal two-sphere, at least $1$ embedded minimal two-sphere $\Sigma_1$ of index one, and at least $1$ embedded minimal two-sphere $\Sigma_2$ of index one or two.\footnote{As will be clear from the course of the proof, the second minimal two-sphere $\Sigma_2$ arises with \emph{multiplicity one} from a two-parameter min-max procedure.  Marques-Neves have announced in the Almgren-Pitts setting that when the min-max surface is obtained with multiplicity one, the index is equal to the number of parameters.  It is likely their argument would thus apply to show that the index of $\Sigma_2$ is two. }  In this case
\begin{equation}
|\Sigma_1|<|\Sigma_2|\ < 2|\Sigma_1|.\footnote{For $A$ a set, throughout this paper $|A|$ or $\mathcal{H}^2(A)$ will denote the $2$-dimensional Hausdorff measure of $A$}
\end{equation}
\end{enumerate}
\end{theorem}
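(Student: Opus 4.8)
The plan is to run two min-max procedures. The first is the one-parameter Simon-Smith min-max over sweep-outs of $M$ by two-spheres (the family $\pi^{-1}(a_1)$ in the model case), producing an embedded minimal two-sphere $\Sigma_1$; by the standard Simon-Smith regularity theory together with bumpiness, $\Sigma_1$ has index at most one, and since $M$ is diffeomorphic to $\mathbb{S}^3$ it cannot be stable (a stable minimal two-sphere in a metric on $\mathbb{S}^3$ would by Frankel-type arguments and the structure of mean convex foliations be impossible in the role of the width-realizing surface — more precisely, if $\Sigma_1$ were stable one would run the flow from it and produce a foliation contradicting the one-parameter width being strictly positive and critical). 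So in fact $\Sigma_1$ has index exactly one. This is the ``easy'' half and is essentially already in the literature via Simon-Smith plus the bumpy metrics theorem of White.

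The second, and genuinely new, step is a \emph{two-parameter} min-max over the family $\pi^{-1}(a_2)$ (sweep-outs parametrized by a $2$-cycle in $\mathbb{RP}^3$, or equivalently over a suitable $2$-dimensional family of two-spheres). This produces a min-max value $\omega_2 \ge \omega_1 = |\Sigma_1|$ realized by some collection of minimal two-spheres counted with integer multiplicity. The heart of the argument is to \emph{rule out multiplicity}: one must show $\omega_2$ is realized by a single embedded minimal two-sphere $\Sigma_2$ with multiplicity one, and not by $\Sigma_1$ counted twice (which a priori could happen exactly when $\omega_2 = 2\omega_1$). This is where mean curvature flow enters. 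The idea is that if the two-parameter min-max sequence degenerated to $2\Sigma_1$, one could use the mean convex foliation produced by running mean curvature flow (with surgery, or level-set flow) starting from surfaces on either side of $\Sigma_1$ to construct a competitor sweep-out in the class $\pi^{-1}(a_2)$ whose maximal slice has area strictly less than $2|\Sigma_1|$ — essentially because the flow instantaneously decreases area and the foliation lets one ``slide'' one copy of $\Sigma_1$ past the other through regions of strictly smaller area. This forces $\omega_2 < 2\omega_1$, hence $\omega_2 = |\Sigma_2|$ for an honest single sphere with $|\Sigma_1| \le |\Sigma_2| < 2|\Sigma_1|$; a further argument (a strict mountain-pass / homotopy comparison, again using that the metric is bumpy so that minimal spheres are isolated and nondegenerate) upgrades this to the strict inequality $|\Sigma_1| < |\Sigma_2|$ and shows $\Sigma_2 \ne \Sigma_1$, with index one or two by the Lusternik-Schnirelmann-type index bound from the two-parameter family. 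In the alternative case where $M$ \emph{does} admit a stable embedded minimal two-sphere, one instead uses that stable sphere to bound a region, runs the one-parameter min-max on each side (or uses catenoid-type neck arguments), and gets two index-one spheres; the dichotomy between the two cases is exactly the dichotomy of whether the min-max process ``sees'' a stable sphere as a barrier.

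The main obstacle — and the technical core of the paper — is the multiplicity-one assertion for the two-parameter min-max, i.e. the exclusion of the degenerate possibility $\omega_2 = 2\omega_1$ with the limit being $2\Sigma_1$. General Almgren-Pitts min-max gives no control on multiplicity, and the Simon-Smith setting is not obviously better; the novelty is that one can import the smoothing and area-monotonicity properties of mean curvature flow to manufacture an explicit competitor family. Making this rigorous requires: (i) the existence of smooth mean convex foliations near $\Sigma_1$ (the foliation result advertised in the abstract), which itself needs a careful flow-with-surgery or avoidance-principle argument since $\Sigma_1$ may be unstable; (ii) gluing this local foliation into a global $2$-parameter sweep-out representing the correct homotopy class $\pi^{-1}(a_2)$, which is a topological bookkeeping step using Hatcher's theorem to know the relevant parameter space is $\mathbb{RP}^4$ and that $a_2$ is detected; and (iii) controlling areas uniformly along the constructed family so that the strict inequality $\max\text{-slice} < 2|\Sigma_1|$ actually holds. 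Secondary difficulties are the index estimates (needing that bumpy metrics make all minimal spheres nondegenerate, so Morse-theoretic index bounds apply) and ensuring the genus/topological-type control in the Simon-Smith limit so that what is produced is genuinely a two-\emph{sphere} rather than a higher-genus surface or a non-sphere component — this uses the Simon-Smith genus bound, which is available precisely because we sweep out by spheres.
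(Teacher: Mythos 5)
Your overall architecture matches the paper's: dichotomize on whether $M$ admits a stable embedded minimal two-sphere, handle the stable case by one-parameter min-max on the two balls it bounds (this is the Ketover--Liokumovich index-one result), and in the unstable case run a two-parameter min-max detecting $\alpha^2$, kill multiplicity by producing a competitor family with maximal slice strictly below $2|\Sigma_1|$ built from a mean convex foliation coming from mean curvature flow with surgery, and dispose of the degenerate possibility $\omega_2=\omega_1$ by a Lusternik--Schnirelmann argument combined with bumpiness. However, there is a genuine gap at the very point you identify as the heart of the matter: your stated mechanism for $\sup_{s,t}|\Gamma_{s,t}|<2|\Sigma_1|$ --- ``the flow instantaneously decreases area and the foliation lets one slide one copy of $\Sigma_1$ past the other through regions of strictly smaller area'' --- does not establish the strict inequality. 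The slices of a two-parameter family detecting $\alpha^2$ must be embedded spheres degenerating on the diagonal $s=t$, so the two disjoint leaves $\Sigma_s$ and $\Sigma_t$ must be joined by a neck and then the neck must be opened up as $t\to s$; near $(s,t)\approx(0,0)$ both leaves are essentially the width-realizing sphere, the foliation's area deficit is only quadratic in the separation, and the neck itself costs area. ``Sliding past'' is not available: the diagonal cannot be avoided, and at it the naive family has area exactly $2|\Sigma_0|$. The paper closes exactly this gap with the catenoid estimate of Ketover--Marques--Neves (opening the neck logarithmically saves an $\eps^2$ term that beats the $\eps^2/|\log\eps|$ cost), applied in the region where both parameters are near $0$; you list ``controlling areas uniformly'' as difficulty (iii) but supply no mechanism, and without the catenoid estimate the construction only yields $\sup|\Gamma_{s,t}|\approx 2|\Sigma_0|$, which is useless for ruling out multiplicity.

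Two smaller corrections. First, your claim that the Simon--Smith sphere ``cannot be stable because $M$ is diffeomorphic to $\mathbb{S}^3$'' (with the parenthetical flow argument) is unjustified and not how the paper proceeds: the dichotomy is about the existence of \emph{any} stable embedded minimal two-sphere in $M$, and in the unstable case the index-one and multiplicity-one statements for $\Sigma_1$ come from the fact that the $1$-width equals the smallest area of a minimal two-sphere (the paper's Lemma on realizing the $1$-width, via Marques--Neves), not from a Frankel-type argument. Second, to deduce multiplicity one for $\Sigma_2$ from $\omega_2<2\omega_1$ you also need the min-max varifold to be \emph{connected} and the comparison $|\Sigma_2|\geq|\Sigma_1|$; the former uses the lemma that in the absence of stable spheres any two embedded minimal two-spheres intersect (proved via the level set flow and the avoidance principle), and the latter uses that $\Sigma_1$ has smallest area among all minimal two-spheres --- neither step appears in your sketch, though both are short once stated.
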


In other words, Theorem \ref{main} proves the existence of at least 3 embedded minimal two-spheres if there is a stable one, and of at least 2 embedded minimal two-spheres if there is no stable one, both without any assumptions on the curvature. We recall that a metric is called \emph{bumpy} provided that its immersed minimal surfaces don't have nontrivial Jacobi fields. This is a generic property for metrics \cite{White_bumpy}. In particular, any metric can be perturbed slightly to be bumpy.

Regarding the intersection properties of the minimal two-spheres produced by Theorem \ref{main}, it is easy to see that the 3 embedded minimal two-spheres from case (1) are pairwise disjoint, but that the 2 embedded minimal two-spheres from case (2) always intersect each other.

A natural family of examples of three-spheres to test Theorem \ref{main} on are ellipsoids.  Namely, given $a> b> c> d>0$, consider the ellipsoid
\begin{equation}
E(a,b,c,d):= \left\{\frac{x_1^2}{a^2}+ \frac{x_2^2}{b^2}+\frac{x_3^2}{c^2}+\frac{x_4^2}{d^2}=1\right\}\subset\mathbb{R}^4.
\end{equation}
It contains at least 4 minimal `planar' two-spheres, which are obtained by the intersection of $E(a,b,c,d)$ with the coordinates hyperplanes $\{x_i=0\}$.  

In one of his early problem sections on minimal surfaces, S.T. Yau asked (c.f. Section 4 in \cite{Yau}):\\
\\
{\bf Yau's problem (1987):} Are the only minimal two-spheres in an ellipsoid centered about the origin in $\mathbb{R}^4$ the planar ones?
\\
\\
It is a classical theorem of Almgren \cite{Almgren} that in round spheres, the only immersed minimal two-spheres are planar.  White \cite{White_bumpy} proved that when $a$, $b$, $c$ and $d$ are close enough, then all minimal two-spheres in $E(a,b,c,d)$ are planar.  

We obtain the following negative answer to Yau's question:
\begin{theorem}[Answer to Yau's question]\label{yau}
For fixed $b,c$ and $d$, if $a$ is chosen sufficiently large, then the ellipsoid $E(a,b,c,d)$ contains a non-planar embedded minimal two-sphere.
\end{theorem}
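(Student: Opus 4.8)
The plan is to exploit a sharp dichotomy among the four planar spheres as $a\to\infty$: three of them degenerate while the fourth does not. Write $g$ for the induced metric on $E=E(a,b,c,d)$ and set $\Sigma_0:=E\cap\{x_1=0\}$. As $a\to\infty$ the three ``long'' planar spheres $E\cap\{x_j=0\}$, $j=2,3,4$, have area and Morse index tending to infinity, whereas $\Sigma_0$ stays totally geodesic of fixed area $A_0=A_0(b,c,d)$ and eventually becomes a \emph{non-degenerate} minimal two-sphere of index one. At the same time, sweeping $E$ out by the level sets $\{x_1=t\}$, $t\in[-a,a]$, capped off at the two tips, shows the one-parameter (Simon--Smith) min-max width satisfies $W_1(E(a,b,c,d))\le A_0$ for every $a$, and this width is continuous under $C^\infty$ perturbations of the metric. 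Hence the extra minimal two-sphere produced by Theorem \ref{main} (applied to a bumpy perturbation of $g$) has area $<2A_0$ and index $\le 2$; for $a$ large this rules out the three long planar spheres, and a short further argument rules out $\Sigma_0$ as well, so the sphere is non-planar.

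First I would record the geometry of the planar spheres. Each $E\cap\{x_i=0\}$ is the fixed-point set of the isometric reflection $x_i\mapsto -x_i$ of $(E,g)$, hence totally geodesic, with Jacobi operator $\Delta+\mathrm{Ric}_g(\nu,\nu)$ and $\nu=\partial_{x_i}$. Since $E\subset\mathbb{R}^4$ is strictly convex, $\mathrm{Ric}_g>0$, so the constant function is a negative direction and \emph{no planar sphere is stable}. A direct computation gives $|\Sigma_0|=A_0(b,c,d)$ and $|E\cap\{x_j=0\}|\ge\lambda(b,c,d)\,a$ for $j=2,3,4$. For such $j$ the principal curvature of $E$ in the $x_1$-direction is $O(1/a)$ along the long part of the surface, so there $\mathrm{Ric}_g(\nu,\nu)$ is bounded below by a positive constant on a region isometric to within $o(1)$ to $\gamma\times[-a/2,a/2]$ for a fixed closed convex planar curve $\gamma$; inserting the low-frequency Dirichlet modes of the long factor produces $\gtrsim a$ mutually orthogonal negative directions, so $\mathrm{index}\big(E\cap\{x_j=0\}\big)\to\infty$. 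Along $\Sigma_0$, by contrast, the same computation gives $\mathrm{Ric}_g(\nu,\nu)=O(1/a^2)$, so the Jacobi operator of $\Sigma_0$ converges to $-\Delta_{\Sigma_0}$ (spectrum $0<\mu_1(\Sigma_0)<\cdots$); hence for $a$ large its lowest eigenvalue is negative and its next eigenvalue positive, i.e.\ $\Sigma_0$ is non-degenerate of index one. Thus there is $a_0=a_0(b,c,d)$ so that for $a\ge a_0$ the sphere $\Sigma_0$ is the only planar sphere of area $<2A_0$ and the only one of index $\le 2$, it is non-degenerate, and no planar sphere is stable.

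Next I would combine Theorem \ref{main} with a compactness argument. Fix $a\ge a_0$. If $(E,g)$ already contains a stable embedded minimal two-sphere, it is non-planar and we are done. Otherwise choose bumpy metrics $g_i\to g$; since a stable minimal two-sphere cannot be ``long'' (the almost-cylindrical region found above carries many negative Jacobi directions), a stable minimal two-sphere in $(E,g_i)$ has area and curvature bounded uniformly in $i$, hence would subconverge to one in $(E,g)$ --- which we excluded. So $(E,g_i)$ is in case (2) of Theorem \ref{main} for $i$ large, giving $\Sigma_1^i$ of index one with $|\Sigma_1^i|\le W_1(g_i)\le A_0+o(1)$ and $\Sigma_2^i\neq\Sigma_1^i$ of index $\le 2$ with $|\Sigma_2^i|<2|\Sigma_1^i|\le 2A_0+o(1)$. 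Having uniformly bounded area and index, $\Sigma_1^i,\Sigma_2^i$ subconverge (by the compactness theorem for minimal surfaces of bounded area and index; the limit consists of embedded minimal \emph{spheres}, since the genus cannot increase in the limit and $E\cong\mathbb{S}^3$ is orientable) to finite unions $\bar\Sigma_1,\bar\Sigma_2$ of embedded minimal two-spheres with $|\bar\Sigma_1|\le A_0$ and $|\bar\Sigma_2|\le 2A_0$. If $\bar\Sigma_2$ has a non-planar component we are done; otherwise every component has area $<2A_0<\lambda a$, hence equals $\Sigma_0$, so $\bar\Sigma_2=m\Sigma_0$ with $m\in\{1,2\}$. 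If $m=2$, the connected spheres $\Sigma_2^i$ converge to $2\Sigma_0$ through configurations with a genuine neck, and a neck/index count using $\mathrm{index}(\Sigma_0)=1$ forces $\mathrm{index}(\Sigma_2^i)\ge 3$, contradicting $\mathrm{index}(\Sigma_2^i)\le 2$. If $m=1$, run the same argument on $\bar\Sigma_1$: either it has a non-planar component (done), or all its components are planar of area $\le A_0<\lambda a$ and its mass is $\le A_0$, forcing $\bar\Sigma_1=\Sigma_0$ with multiplicity one; then $\Sigma_1^i$ and $\Sigma_2^i$ both converge smoothly with multiplicity one to the non-degenerate sphere $\Sigma_0$, so by the implicit function theorem $\Sigma_1^i=\Sigma_2^i$ for $i$ large --- a contradiction. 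In every case $E(a,b,c,d)$ contains a non-planar embedded minimal two-sphere.

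The hard part is excluding the $m=2$ alternative above, i.e.\ ruling out that the second min-max sphere is merely the waist $\Sigma_0$ counted with multiplicity two. This is precisely the multiplicity phenomenon that Theorem \ref{main} (via its mean-curvature-flow input) is built to overcome, and degenerating ellipsoids are the extremal case in which it can just barely be overcome; excluding it here leans on the index information carried by Theorem \ref{main} together with index estimates for min-max sequences. Two further points to pin down are that the spheres $\Sigma_1^i,\Sigma_2^i$ produced by Theorem \ref{main} obey the stated area bounds (from the min-max construction in its proof) and that stable minimal two-spheres in $(E,g_i)$ have area bounded independently of $i$ (from the instability of long minimal surfaces).
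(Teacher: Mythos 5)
Your route is genuinely different from the paper's, and as written it has a gap at its decisive step. The paper does not perturb to bumpy metrics at all: it proves a non-bumpy variant of Theorem \ref{main} (Theorem \ref{nongeneric}), which applies because the ellipsoid has no stable minimal two-spheres and admits an explicit optimal foliation through $\Gamma_1(a)$, and then concludes by a pure area comparison -- the alternative ``infinitely many minimal spheres of least area'' cannot consist of planar spheres (there are only four), and in the other alternative the second sphere satisfies $|\Sigma_2(a)|<2|\Gamma_1(a)|$, while every planar sphere other than $\Gamma_1(a)$ has area comparable to $a\cdot G(c,d)\to\infty$, so $\Sigma_2(a)$ cannot be planar. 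No limit of min-max spheres, and hence no multiplicity analysis, is ever needed. Your plan instead applies Theorem \ref{main} to bumpy approximations $g_i\to g$ and passes to limits, which forces you to confront exactly the multiplicity phenomenon that the paper's formulation is built to sidestep.

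The genuine gap is your exclusion of the $m=2$ alternative: the claim that two-sheeted convergence of $\Sigma_2^i$ to the index-one sphere $\Sigma_0$, joined by a neck, forces $\mathrm{index}(\Sigma_2^i)\ge 3$ is asserted but neither proved nor reduced to a quotable result; this neck/index count is precisely the delicate point of the whole problem and cannot be waved through by appeal to ``index estimates for min-max sequences.'' It can, however, be repaired by a softer standard argument, the one the paper itself uses in Proposition \ref{ellipsoidsprop}: here the ambient manifold is fixed and only the metric varies, so multiplicity $\ge 2$ convergence to the two-sided $\Sigma_0$ would yield (from the renormalized difference of the two graphical sheets, with a removable-singularity argument at the finitely many neck points) a positive Jacobi field on $\Sigma_0$, forcing $\lambda_1(L_{\Sigma_0})=0$; this is impossible because the ellipsoid has positive Ricci curvature, so every two-sided minimal surface, in particular $\Sigma_0$, is strictly unstable. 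The same observation repairs the other soft spot in your write-up: you assert without proof that stable spheres in $(E,g_i)$ have uniformly bounded area in order to place $g_i$ in case (2) of Theorem \ref{main}; it is cleaner to choose the bumpy approximations $g_i$ with positive Ricci curvature (an open condition near $g$), which rules out stable two-spheres outright. With these repairs your compactness argument closes (the $m=1$ branch is fine, using non-degeneracy of $\Sigma_0$ for large $a$ and $|\Sigma_1^i|<|\Sigma_2^i|$), but the resulting proof is considerably heavier than the paper's direct area comparison via Theorem \ref{nongeneric}.
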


Fix $b,c,d$ and write $E(a):=E(a,b,c,d)$.   To prove Theorem \ref{yau}, we will show that for $a$ large enough (i.e. as the ellipsoids become sufficiently elongated) the second minimal two-sphere $\Sigma_2(a)\subset E(a)$ produced by Theorem \ref{main} is not planar. In particular, for $a\gg b$ the ellipsoid $E(a)$ contains at least $5$ embedded minimal two-spheres.\\ 

The degenerating ellipsoids also are interesting with respect to other conjectures in min-max theory.  Marques-Neves conjectured that generically, the min-max process does not produce unstable components with multiplicity \cite{MN_mult}. The elongated ellipsoids are a ``critical" family of examples where the conjecture comes closer and closer to failing as the metrics degenerate. Namely, we prove the following proposition:

\begin{proposition}\label{limit}
As $a\rightarrow\infty$, the minimal two-spheres $\Sigma_2(a)$ converge as varifolds to a minimal two-sphere with multiplicity two. 
\end{proposition}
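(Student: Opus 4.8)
The plan is to combine the a priori area bound supplied by Theorem~\ref{main} with the monotonicity formula, compactness for minimal surfaces of bounded index, and the maximum principle for stationary varifolds; the one genuinely delicate point is a uniform gap between the first two widths of the elongating ellipsoids. Write $E(a)=E(a,b,c,d)$ and let $\Sigma_\infty:=E(a)\cap\{x_1=0\}=\{x_1=0,\ x_2^2/b^2+x_3^2/c^2+x_4^2/d^2=1\}$, a subset of $\R^4$ that is independent of $a$, and set $\Lambda:=\mathcal{H}^2(\Sigma_\infty)$. One checks, in the setting of Theorem~\ref{yau}, that for $a$ large $E(a)$ lies in case (2) of Theorem~\ref{main}, with $W_1(E(a))=\Lambda$ and $\Sigma_1(a)=\Sigma_\infty$, so that $\Lambda<|\Sigma_2(a)|<2\Lambda$ and, since the two minimal spheres of case (2) always intersect, $\Sigma_2(a)\cap\{x_1=0\}\neq\emptyset$. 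As $a\to\infty$ the induced metrics on $E(a)\cap\{|x_1|\le R_0\}$ converge smoothly to the product metric on $S^2_{b,c,d}\times[-R_0,R_0]$, where $S^2_{b,c,d}:=\{x_2^2/b^2+x_3^2/c^2+x_4^2/d^2=1\}$; in particular this region has uniformly bounded geometry, so the monotonicity formula gives $\mathcal{H}^2(\Sigma_2(a)\cap B_r(p))\ge\tfrac12\pi r^2$ for $p\in\Sigma_2(a)$ with $|x_1(p)|\le R_0$ and $r\le r_0$. Since $\Sigma_2(a)$ is connected, has area below $2\Lambda$, and meets $\{x_1=0\}$, chaining disjoint such balls along the $x_1$-direction forces $\Sigma_2(a)\subset\{|x_1|\le R\}$ for a fixed $R$ and all large $a$.

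Hence $(\Sigma_2(a))$ is a sequence of closed minimal two-spheres in a fixed slab, of area $\le2\Lambda$ and Morse index $\le2$. By the compactness theorem for minimal surfaces of bounded area and index, together with the smooth convergence of ambient metrics, a subsequence $\Sigma_2(a_j)$ converges---as varifolds in $\R^4$ and smoothly away from finitely many points---to an integral varifold $V$, supported in $S^2_{b,c,d}\times\R$ and stationary there, with $\|V\|=\lim_j|\Sigma_2(a_j)|\in[\Lambda,2\Lambda]$; at any exceptional point a catenoidal neck forms, forcing the density of $V$ there to be $\ge2$. Each cross-section $S^2_{b,c,d}\times\{t\}$ is totally geodesic, hence minimal, in $S^2_{b,c,d}\times\R$; applying the maximum principle for stationary integral varifolds at the points of $\operatorname{spt}V$ where $x_1$ is extremal on a connected piece of $\operatorname{spt}V$, and then the constancy theorem, we get
\begin{equation}
V=\sum_{i=1}^{k}m_i\,\big[S^2_{b,c,d}\times\{t_i\}\big],\qquad t_1<\dots<t_k,\ \ m_i\in\bZ_{\ge1},
\end{equation}
so that $\big(\sum_i m_i\big)\Lambda=\|V\|\le2\Lambda$ and $\sum_i m_i\in\{1,2\}$.

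It remains to see $k=1$ and $m_1=2$. If $\operatorname{spt}V$ contained two consecutive slices $t_i<t_{i+1}$, then $V$ has no mass in $\{t_i+\eps<x_1<t_{i+1}-\eps\}$; but for $j$ large $\Sigma_2(a_j)$ has points with $x_1$ near $t_i$ and near $t_{i+1}$, so, being connected, it contains a point $p_j$ with $x_1(p_j)=\tfrac12(t_i+t_{i+1})$, and the monotonicity formula in a ball about $p_j$ of fixed radius $<\tfrac14(t_{i+1}-t_i)$ produces a definite amount of area of $\Sigma_2(a_j)$ at bounded distance from $\{x_1=\tfrac12(t_i+t_{i+1})\}$---contradicting that $V$ has no mass there. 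The same argument, applied at a point of $\Sigma_2(a_j)$ with $x_1=0$, forces $0\in\{t_1,\dots,t_k\}$. Hence $k=1$ and $V=m\,[\Sigma_\infty]$ with $m\in\{1,2\}$. Finally $m=1$ is impossible: then the convergence $\Sigma_2(a_j)\to\Sigma_\infty$ has no exceptional point (a neck would force density $\ge2$), hence is smooth, so $\Sigma_2(a_j)$ is a small normal graph over $\Sigma_\infty=\Sigma_1(a_j)$ for $j$ large. But for $a$ large $\Sigma_\infty$ is the only minimal two-sphere of $E(a)$ in a fixed $C^2$-neighbourhood of itself: its Jacobi operator in $E(a)$ has a single negative eigenvalue, of order $-1/a^2$, with eigenfunction close to a constant (the translational Jacobi field of the limiting cross-section) and the remaining spectrum bounded away from $0$ uniformly in $a$; a Lyapunov--Schmidt reduction of the area functional near $\Sigma_\infty$ then produces a one-parameter branch on which the area is, to leading order, the strictly concave function $s\mapsto(1-s^2/a^2)\Lambda$ (the area of $E(a)\cap\{x_1=s\}$), with unique critical point $s=0$. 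Thus $\Sigma_2(a_j)=\Sigma_\infty$ for $j$ large, contradicting $|\Sigma_2(a_j)|>\Lambda$. Therefore $V=2\,[\Sigma_\infty]$; since every subsequence has a further subsequence with this limit, $\Sigma_2(a)\to2\,[\Sigma_\infty]$ as varifolds as $a\to\infty$, and $\Sigma_\infty$ is a minimal two-sphere (a totally geodesic cross-section of the limiting cylinder, of area $\Lambda$).

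The main obstacle is the last step---excluding that $\Sigma_2(a)$ degenerates to the waist sphere with multiplicity one, equivalently proving the uniform gap $\liminf_{a\to\infty}W_2(E(a))>\Lambda$. A naive Morse-index argument does not suffice, since the instability of $\Sigma_\infty$ itself degenerates as $a\to\infty$ (its negative eigenvalue tends to $0$), so one is forced to use the precise geometry of the degenerating neck via the Lyapunov--Schmidt reduction above (or, alternatively, to try to extract the sharper asymptotics $W_2(E(a))=(2+o(1))\Lambda$ from the cup-square structure of the relevant cohomology class by a Lusternik--Schnirelmann argument); in either case the work is in making these estimates uniform in $a$.
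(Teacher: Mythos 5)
Your set-up (confinement of $\Sigma_2(a)$ to a slab by monotonicity and connectedness, compactness, classification of the limit varifold as an integer multiple of a cross-section $P\times\{t\}$, forcing $t=0$ and total multiplicity at most two by the area bound) runs parallel to the paper's argument and is fine in spirit, though two preliminary points need repair: you invoke Theorem~\ref{main} directly for $E(a)$, which is not legitimate since ellipsoids are not known to be bumpy (the paper works instead with Theorem~\ref{nongeneric} and with bumpy positive-Ricci approximations $g_i\to E(a)$ in Proposition~\ref{ellipsoidsprop}); and you assert $\Sigma_1(a)=\Sigma_\infty$, i.e.\ that the least-area minimal sphere is the planar slice, which is nowhere established (in the proof of Theorem~\ref{yau} this is only a harmless ``otherwise we are done'' reduction) -- fortunately your argument can be rephrased so as not to need it, as the paper's is.

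The genuine gap is the crux: excluding that the limit is $P\times\{0\}$ with multiplicity one. Your route rests on a uniqueness claim -- that for all large $a$ the planar slice is the only minimal two-sphere in a \emph{fixed} $C^2$-neighbourhood of itself in $E(a)$ -- which you propose to prove by a Lyapunov--Schmidt reduction whose reduced functional you only identify ``to leading order'' with $s\mapsto(1-s^2/a^2)\Lambda$. Precisely because the negative eigenvalue of the Jacobi operator degenerates like $a^{-2}$, the competition between this quadratic term and the reduction errors must be controlled uniformly in $a$ on a neighbourhood of fixed size; you concede this is ``the work'' and do not carry it out, so as written the multiplicity-one case is not excluded. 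The paper's exclusion is different and much shorter: approximating $E(a)$ by bumpy metrics of positive Ricci curvature and using Theorem~\ref{main} together with the announced Marques--Neves index result, it arranges that $\Sigma_2(a)$ has index two, or index one with nontrivial nullity, hence at least two nonpositive eigenvalues of its stability operator; if the varifold convergence $\Sigma_2(a)\to P\times\{0\}$ had multiplicity one it would be smooth, the eigenvalues would converge, and $P\times\{0\}\subset P\times\mathbb{R}$ (which is stable) would have nullity at least two, contradicting the fact that its nullity is one. If you want an unconditional and self-contained proof along your lines you must either complete the uniform Lyapunov--Schmidt/uniqueness estimate or find a substitute for the eigenvalue count; at present neither is in place.
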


For a more detailed discussion of this, as well as for the relation with the equidistribution of width conjecture, please see Section \ref{sec_ell}.
\\
\\
Related to the study of minimal two-spheres in the three-sphere is the study of the space of embedded projective planes in $\mathbb{RP}^3$.  By the version of Smale's conjecture\footnote{Bamler-Kleiner have recently announced a proof of this conjecture.} for $\mathbb{RP}^3$, the space of embedded $\mathbb{RP}^2$'s in $\mathbb{RP}^3$ is expected to retract onto $\mathbb{RP}^3$.  As the cup-length of $\mathbb{RP}^3$ with $\mathbb{Z}_2$ coefficients is three, by Lusternik-Schnirelmann theory one expects $4$ minimal projective planes (unlike in the case of two-spheres in the three-sphere, the area minimizing projective plane \emph{does} count as a non-trivial critical point).  We obtain the following theorem:\footnote{We thank Andr\'e Neves for bringing this problem to our attention.}

\begin{theorem}[Projective Planes in $\mathbb{RP}^3$]\label{thm_intro_proj}
Let $M$ be a $3$-manifold diffeomorphic to $\mathbb{RP}^3$ endowed with a metric of positive Ricci curvature.  Then $M$ admits at least $2$ embedded minimal projective planes.  
\end{theorem}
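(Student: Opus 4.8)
The plan is to run a two-parameter min-max procedure for the area functional on sweep-outs of $M$ by embedded projective planes, and to use the positive Ricci curvature hypothesis to rule out multiplicity, so that the two min-max values produce two genuinely distinct embedded minimal $\mathbb{RP}^2$'s. First I would set up the space of sweep-outs: by the version of the Smale conjecture for $\mathbb{RP}^3$ (or more elementarily, by constructing explicit sweep-outs adapted to a Morse function as in the Simon-Smith setting), the relevant parameter space retracts onto $\mathbb{RP}^3$, whose $\mathbb{Z}_2$-cohomology ring has cup-length three. I would single out the cohomology classes $a_1 \in H^1(\mathbb{RP}^3,\mathbb{Z}_2)$ and $a_1^2 \in H^2(\mathbb{RP}^3,\mathbb{Z}_2)$, giving a one-parameter family detecting $a_1$ and a two-parameter family detecting $a_1^2$, and define the corresponding min-max widths $\omega_1 \le \omega_2$. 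Running Simon-Smith/Colding-De Lellis style min-max in the projective-plane class — note the orientation-preserving diffeomorphism group of $\mathbb{RP}^2$ is what controls the topology, so the genus bound in the regularity theory forces the min-max limit to be a union of embedded minimal projective planes and two-spheres, each with some positive integer multiplicity — produces, for each width, a stationary integral varifold that is a smooth embedded minimal surface.

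The heart of the argument is the dichotomy $\omega_1 < \omega_2$ versus $\omega_1 = \omega_2$. In the first case, any minimal surface realizing $\omega_1$ and any realizing $\omega_2$ must be geometrically distinct (they have different areas once multiplicity is accounted for), and one checks that in a manifold with positive Ricci curvature an embedded minimal $\mathbb{RP}^2$ is two-sided nowhere — rather, its orientable double cover is an $\mathbb{S}^2$ and positive Ricci curvature forces each component of the min-max limit to be a minimal sphere or projective plane that is in fact the area-minimizer in its isotopy class or has index one, and crucially no component can appear with multiplicity $\ge 2$: a multiplicity-$k$ sheet would force, by the usual Lusternik-Schnirelmann/catenoid-neck analysis together with the non-existence of stable minimal surfaces under $\mathrm{Ric}>0$, a contradiction (a stable two-sided minimal surface cannot exist when $\mathrm{Ric}>0$, and the one-sided $\mathbb{RP}^2$ case is handled by passing to the double cover). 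Thus in the $\omega_1 < \omega_2$ case we directly get two distinct embedded minimal projective planes (or, if a min-max limit happened to be a sphere, one argues that the lowest width is realized by the area-minimizing $\mathbb{RP}^2$, which exists and is nontrivial here precisely because $\pi_2(M) = 0$ forces any sweep-out by spheres to be trivial, so $\omega_1$ is detected by an honest $\mathbb{RP}^2$). In the degenerate case $\omega_1 = \omega_2$, I would invoke the Lusternik-Schnirelmann argument (as in Marques-Neves \cite{MN}): when two consecutive widths coincide, the set of minimal surfaces at that level is not a single point — it sweeps out a positive-dimensional space — and in particular contains infinitely many, hence at least two, distinct embedded minimal surfaces; again positive Ricci curvature upgrades these to genuine minimal projective planes by excluding stable/multiplicity phenomena.

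The main obstacle I expect is exactly the multiplicity problem in the one-sided setting: a priori the second width $\omega_2$ could be realized by the first minimal projective plane $\Sigma_1$ covered with multiplicity two, which is the $\mathbb{RP}^3$-analogue of the central difficulty in Theorem \ref{main}. The resolution must exploit $\mathrm{Ric} > 0$ decisively — a minimal surface arising with multiplicity $\ge 2$ from a min-max sequence is (by the standard deformation arguments) forced to be stable on its orientable double cover, but $\mathrm{Ric}>0$ admits no closed stable minimal surface, ruling this out. One has to be careful that the double cover of an embedded $\mathbb{RP}^2$ in $M$ immerses (in fact embeds as a two-sided sphere only after unwrapping) correctly and that the second variation inequality transfers; this is where the bulk of the technical work lies. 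A secondary subtlety is ensuring the min-max limits are actually projective planes and not spheres: since $\mathbb{RP}^3$ is aspherical through dimension two in the relevant sense ($\pi_1 = \mathbb{Z}_2$ acts so that nontrivial sweep-outs cannot consist of nullhomotopic spheres), the nontriviality of the chosen cohomology classes in $H^*(\mathbb{RP}^3,\mathbb{Z}_2)$ guarantees the sweep-outs are essential and the limits carry the $\mathbb{RP}^2$ topology on at least one component.
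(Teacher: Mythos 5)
There is a genuine gap at the exact point you identify as "the heart of the argument": your mechanism for excluding multiplicity is the claim that a component arising with multiplicity $\geq 2$ from a min-max sequence is ``by the standard deformation arguments'' forced to be stable (on its orientable double cover), so that $\mathrm{Ric}>0$ rules it out. No such standard argument exists. Unstable components can a priori occur with integer multiplicity in min-max limits -- this is precisely the multiplicity-one difficulty emphasized throughout the paper (and formalized in the Marques--Neves conjecture discussed in Section \ref{sec_ell}); indeed Proposition \ref{limit} exhibits min-max limits converging to an \emph{unstable} sphere with multiplicity two, so multiplicity does not force stability. Positive Ricci curvature kills stable two-sided surfaces, but it does not by itself prevent $\omega_2$ from being realized by the first projective plane counted three times (note that for one-sided limits the relevant multiplicities are odd, so the threat is multiplicity $3$, not $2$). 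Without a quantitative mechanism to exclude this, both branches of your dichotomy ($\omega_1<\omega_2$ and $\omega_1=\omega_2$) stall: in the first branch the surface realizing $\omega_2$ could be $3\Sigma_0$, and in the second the Lusternik--Schnirelmann argument alone does not tell you the resulting surfaces are geometrically distinct from $\Sigma_0$ unless the areas are pinned down.

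The paper's proof (Theorem \ref{rp2s}) avoids this by a different, essentially one-parameter route with an explicit area bound. The first minimal $\mathbb{RP}^2$, $\Sigma_0$, is taken to be the \emph{area minimizer} among embedded projective planes (Proposition 5 in \cite{BBEN}) -- minimization does not degenerate here, unlike for spheres in $\mathbb{S}^3$. Then one passes to the double cover, where the lift $\tilde\Sigma_0$ is an unstable sphere bounding balls $B_\pm$; an optimal foliation of $B_+$ is projected down together with $\Sigma_0$ itself, and the catenoid estimate of \cite{KMN} is used to connect the pieces so that the resulting one-parameter family of projective planes satisfies the strict bound $\sup_t|\Gamma_t|<3|\Sigma_0|$. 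The genus bounds \cite{K} force the min-max limit to be a projective plane with \emph{odd} multiplicity, so the bound $<3|\Sigma_0|$ together with minimality of $|\Sigma_0|$ forces multiplicity one; and in the only remaining degenerate case, where the min-max surface is $\Sigma_0$ itself, an elementary Lusternik--Schnirelmann argument produces a second projective plane at the same area level, using again that $\Sigma_0$ minimizes area so the sweep-out cannot be pulled below $|\Sigma_0|$. If you want to salvage your two-parameter scheme, you would need to supply a comparable quantitative ingredient (an analogue of the $\omega_2<2\omega_1$ bound of Theorem \ref{twoparam}, here $<3|\Sigma_0|$, plus the parity constraint), rather than a stability-from-multiplicity claim.
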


Let us now sketch the main ideas of our proof of Theorem \ref{main}.

If $M$ admits a stable embedded minimal two-sphere, then the manifold is a kind of dumbbell. We can then consider 1-parameter sweep-outs of both halves. More precisely, using the the lower index estimates obtained by the second-named author and Y. Liokumovich \cite{KL} it follows that each of the two balls bounded by the stable two-sphere contains an unstable two-sphere of index one in its interior.

Let us now consider the case that $M$ does not contain any stable embedded minimal two-spheres. Using Simon-Smith's existence theorem we obtain $1$ embedded minimal two-sphere of index one. To obtain a second embedded minimal two-sphere we have to work much harder. To get started, sliding the Simon-Smith sphere a bit to both sides (using the lowest negative eigenfunction of the stability operator) we can decompose $M=D_1\cup Z \cup D_2$ where $Z$ is the short cylindrical region obtained by sliding the Simon-Smith sphere around, and $D_1$ and $D_2$ are smooth embedded $3$-discs with mean convex boundary. To proceed, we prove the following general theorem establishing the existence of smooth mean convex foliations in three-manifolds:

\begin{theorem}[Existence of smooth mean convex foliations in three-manifolds]\label{main_foliation_theorem}
Let $D\subset M^3$ be a smooth three-disc with mean convex boundary. Then exactly one of the following alternatives holds true:
\begin{enumerate}
\item There exists an embedded stable minimal two-sphere $\Sigma\subset \textrm{Int}(D)$.
\item There exists a smooth foliation $\{ \Sigma_t \}_{t\in [0,1]}$ of $D$ by mean convex embedded two-spheres. 
\end{enumerate}
\end{theorem}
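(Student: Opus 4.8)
The plan is to run a mean curvature flow with surgery starting from the boundary $\partial D$, or rather from the foliation near $\partial D$, and to track when and how it can fail to sweep out all of $\textrm{Int}(D)$. First I would observe that since $\partial D$ is mean convex (and, after a small perturbation using the structure of mean convex domains, we may assume strictly mean convex or at worst take a strictly mean convex exhaustion from inside), we may foliate a collar neighborhood of $\partial D$ by mean convex two-spheres $\{\Sigma_t\}_{t \in [0, \delta)}$ obtained by flowing $\partial D$ slightly inward; mean convexity is preserved under the flow by the maximum principle applied to the scalar mean curvature equation. The idea is then to continue this foliation as far as possible: let $\cT$ be the supremum of times $t$ such that $D$ admits a mean convex foliation $\{\Sigma_s\}_{s \in [0,t]}$ with $\Sigma_0 = \partial D$ and each $\Sigma_s$ an embedded two-sphere bounding a region $U_s \subset D$ with $U_s \subset U_{s'}$ for $s < s'$. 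If the complement $D \setminus \bigcup_t U_t$ has empty interior, we are in case (2) (after reparametrizing $[0,\cT) \to [0,1]$ and checking the foliation closes up smoothly). Otherwise, the flow develops a singularity, or the leaves converge to a limit surface that is no longer mean convex, before exhausting $D$.

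The key structural input is the nature of the obstruction to continuing. When the mean-convex-foliation flow degenerates, the leaves $\Sigma_t$ converge (as varifolds, and by the sheeting/regularity theory for stable or mean convex limits, smoothly away from finitely many points) to a surface $\Sigma_\infty$. Since each $\Sigma_t$ is mean convex, the limit $\Sigma_\infty$ is \emph{weakly mean convex}, i.e.\ has $H \geq 0$. By the strong maximum principle for the mean curvature operator, either $H > 0$ somewhere on $\Sigma_\infty$ — but then one can push $\Sigma_\infty$ inward a further definite amount while keeping $H \geq 0$, contradicting maximality of $\cT$ unless $\Sigma_\infty$ touches itself or pinches — or $H \equiv 0$, so $\Sigma_\infty$ is a minimal surface. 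In the latter case $\Sigma_\infty$ is a limit of a monotone family of embedded two-spheres, hence is itself an embedded minimal surface that is a stable (it is one-sided limit of mean convex barriers from one side, so variations decreasing area are obstructed) minimal sphere, giving case (1). The remaining possibility — that $\Sigma_t$ pinches off or the region $U_t$ disconnects into several components before becoming minimal — is handled by noting that each component is again a smooth three-disc (or ball) with mean convex boundary inside $D$, so one applies the argument componentwise and extracts a stable minimal sphere from the component that does not get foliated.

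The main obstacle will be the second case: ruling out, or rather correctly handling, the formation of \emph{neck singularities} and the disconnection of the foliated region, and ensuring that the extracted limit is genuinely an embedded \emph{stable} minimal two-sphere lying in the open interior $\textrm{Int}(D)$ (not coinciding with $\partial D$, and not a higher-genus surface). For the topological type, I would use that $D$ is a three-disc together with the fact that a minimal surface appearing as a one-sided varifold limit of embedded spheres bounding nested balls must be a sphere (or project to one under a lamination argument); for embeddedness and stability, the key is Schoen–Simon–Yau / White's sheeting theorem for the limit of a mean convex flow, which forces the limit to be a smooth embedded stable minimal surface with multiplicity. Finally, the dichotomy being \emph{exclusive} — exactly one alternative holds — follows because a mean convex foliation of $D$ acts as a family of barriers that precludes any closed minimal surface in $\textrm{Int}(D)$ by the maximum principle (sweep any hypothetical minimal $\Sigma$ by the leaves; the first leaf touching it gives a contradiction with $H > 0$ of the leaf unless $\Sigma$ is a leaf, but leaves are mean convex, not minimal).
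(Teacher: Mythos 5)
Your proposal correctly identifies the easy half of the dichotomy (a non-extinct, monotone mean convex limit produces a stable minimal sphere, and a completed foliation acts as a barrier excluding interior minimal surfaces, so the alternatives are exclusive), but it has a genuine gap precisely at the point where the actual work of the theorem lies: the passage through neck singularities in case (2). You write that if the swept-out region pinches or disconnects, one ``applies the argument componentwise.'' This does not prove alternative (2). A foliation of $D$ by \emph{embedded two-spheres} cannot simply continue through a neck-pinch: at the singular time the leaf degenerates, and immediately afterwards the natural continuation consists of two or more disjoint spheres, so the separate foliations of the components do not assemble into a foliation of $D$ by single embedded spheres. In other words, the level-set (or any weak) flow only yields a \emph{singular} foliation, and converting it into a smooth one is exactly the content of the theorem. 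The paper's proof spends essentially all of its technical effort on this point: it runs mean curvature flow with surgery (extended to general ambient manifolds, including a canonical neighborhood theorem, in Section \ref{sec_surgery_ambient}), and then, following the Buzano--Haslhofer--Hershkovits scheme, it keeps the surfaces connected through the surgery regions by thin ``strings,'' deforming each capped tube and each discarded component to a marble tree via isotopies that are carefully arranged to be \emph{strictly monotone} and mean convex (Proposition \ref{prop_replacement2}, Corollary \ref{prop_replacement1}, and the backward induction over surgery times). Nothing in your sketch provides a mechanism for crossing a surgery/pinching time while retaining a genuine foliation by embedded spheres, nor for continuing the smooth foliation all the way to extinction (in the paper the foliation terminates in the one-dimensional skeleton of a marble tree, not merely at the first singular time).

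Two secondary points. First, in case (1) your justification that the limit is an embedded \emph{stable} minimal \emph{two-sphere} is heuristic: a varifold limit of embedded spheres need not obviously be a sphere (genus can concentrate at the points of bad convergence), and the paper instead uses White's structure theorem for the long-time limit $K_\infty$ of the level-set flow of a mean convex domain (finitely many stable minimal components) together with the genus monotonicity of the level-set flow to conclude the components are spheres. Second, the dichotomy in the paper is organized not around a maximal foliation time $\mathcal{T}$ but around the level-set flow: either $K_\infty\neq\emptyset$ (case (1)) or the flow becomes extinct in finite time, which bounds the extinction time of any flow with surgery (Ilmanen's set-theoretic subsolution property) and opens the way for the surgery construction; this cleanly separates the two alternatives without your ``push $\Sigma_\infty$ inward a further definite amount'' step, which as stated is not a complete argument at a singular limit.
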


Note that in our application we are always in case (2). We have stated Theorem \ref{main_foliation_theorem} in its more general form since it is clearly of independent interest, and will likely have several other applications in the geometry of three-manifolds. In fact, we've been informed recently that it even has applications in the field of inverse problems \cite{ABN}.

To obtain some intuition for Theorem \ref{main_foliation_theorem}, imagine (without worrying too much about the details for the moment) that the disc $D$ evolves by mean curvature flow \cite{Hui84}. Recall that mean-convexity is preserved under mean curvature flow. In the simplest possible scenario, the mean curvature flow of $D$ remains smooth and either becomes extinct in finite time in a round point, giving the foliation from (2), or converges for $t\to\infty$ to a minimal embedded two-sphere, giving (1).

Of course, in general the situation is much more complicated since the mean curvature flow typically develops local singularities. One way to continue the flow through singularities is given by the level set method \cite{ES,CGG}, and in fact our proof shows that case (2) happens if and only if the level set flow becomes extinct in finite time.
The main issue however is that the foliation produced by the level set flow is in general singular.

To produce a smooth foliation instead of a singular foliation we use mean curvature flow with surgery. Mean curvature flow with surgery in general ambient manifolds has been constructed by Brendle-Huisken \cite{BrendleHuisken}. However, since we also need a canonical neighborhood theorem for our application we instead extend the approach from Haslhofer-Kleiner \cite{HK} to the setting of general ambient manifolds. This extension, and in particular the canonical neighborhood theorem for mean curvature flow with surgery in general ambient manifolds, which is again of independent interest, are discussed in Section \ref{sec_surgery_ambient}. We then combine the existence theorem, the canonical neighborhood theorem, and methods from the recent topological application of mean curvature flow with surgery by Buzano-Haslhofer-Hershkovits \cite{BHH} to produce the desired smooth foliation.\\

Having discussed Theorem \ref{main_foliation_theorem} in quite some detail, let us now sketch how it can be used to finish the proof of Theorem \ref{main}. Recalling that $M=D_1\cup Z \cup D_2$ and using the foliations of $D_1$ and $D_2$ produced by Theorem \ref{main_foliation_theorem} we can build an optimal foliation of $M$, by which we mean a foliation $\{\Sigma_t\}_{t\in [-1,1]}$ of $M$ by two-spheres so that the Simon-Smith sphere sits in the middle of the foliation as $\Sigma_{0}$ and all other slices have less area. From the one parameter family $\{\Sigma_t\}$ we can then form a two parameter family $\{\Sigma_{s,t}\}$. Loosely speaking, $\Sigma_{s,t}$ consist of the two surfaces $\Sigma_s$ and $\Sigma_t$ joined by a very thin tube.

As long as the tube in the surface $\Sigma_{s,t}$ is very thin, it contributes negligible area and thus we obtain
\begin{equation}\label{badareas}
\sup_{s,t} |\Sigma_{s,t}| \approx 2|\Sigma_{0}|.
\end{equation}
Using the catenoid estimate from Ketover-Marques-Neves \cite{KMN}, by opening up the neck near $(s,t)\approx(0,0)$ (where the area of $\Sigma_{s,t}$ is close to $2|\Sigma_{0}|$) we can improve \eqref{badareas} to show
\begin{equation}\label{upperbound}
\sup_{s,t} |\Sigma_{s,t}| < 2|\Sigma_{0}|.
\end{equation}
The bound \eqref{upperbound} guarantees that the minimal surface produced from two-parameter min-max is not simply twice the minimal sphere $\Sigma_{0}$ realizing the width of $M$. Finally, by Lusternik-Schnirelmann theory, the minimal surface produced cannot be $\Sigma_{0}$ with multiplicity one. This finishes the sketch of our existence proof for the second embedded minimal two-sphere.

Finally, we make a few remarks about the necessity of using the mean curvature flow in this paper.  Assuming the manifold has no stable spheres, we can use min-max arguments to obtain an optimal \emph{sweepout} with the Simon-Smith sphere $\Sigma_0$ sitting with largest area in the middle.  To form the two-parameter family $\Sigma_{s,t}$ however we need to take the connect sum of $\Sigma_s$ and $\Sigma_t$, and for this it is crucial that $\Sigma_s$ and $\Sigma_t$ are disjoint for $s\neq t$.  For this reason we cannot use an optimal \emph{sweepout} but rather need an optimal \emph{foliation}, which the mean curvature flow provides.  \\

This article is organized as follows. In Section \ref{sec_topo}, we discuss the topology of the space of embedded two-spheres. In Section \ref{sec_optimal}, we show how to produce optimal foliations using Theorem \ref{main_foliation_theorem} as a black-box. In Section \ref{sec_twopar}, we construct the two-parameter sweep-out $\{\Sigma_{s,t}\}$. In Section \ref{sec_main}, we show how all the ingredients can be combined to prove Theorem \ref{main} (using Theorem \ref{main_foliation_theorem}). In Section \ref{sec_fol_eucl}, we prove the existence of smooth mean convex foliations in Euclidean space. In Section \ref{sec_surgery_ambient}, we extend the surgery construction from Haslhofer-Kleiner to general ambient manifolds. In Section \ref{sec_fol_amb}, we extend the construction of smooth mean convex foliations to general ambient manifolds and thus conclude the proof of Theorem \ref{main_foliation_theorem}. In Section \ref{sec_ell}, we analyze the examples of ellipsoids to answer Yau's question and discuss the relationship with the multiplicity one conjecture and the equidistribution of width conjecture of Marques-Neves.
Finally, in Section \ref{section_projective} we prove Theorem \ref{thm_intro_proj}.
\\

{\bf Acknowledgements:} R.H. was partially supported by NSERC grant RGPIN-2016-04331, NSF grant DMS-1406394 and a Connaught New Researcher Award. D.K. was partially supported by an NSF Postdoctoral Research fellowship as well as ERC-2011-StG-278940. D.K. would like to thank A. Neves and F. Cod\'a-Marques for their encouragement and support, and for conversations from which some ideas in this paper first arose.   We would also like to thank Y. Liokumovich and N. Sarquis for several useful conversations.

\section{Topology of Space of embedded two-spheres}\label{sec_topo}

In order to apply min-max theory to the space of embedded two-spheres in a three-sphere, we need to first understand the topological type of this space.  

 Letting $\mathbb{S}^3$ denote the round three-sphere in $\mathbb{R}^4$, let us consider the geodesic spheres
\begin{equation}
\mathcal{G}(a,b,c,d,e):=\{ax_0+bx_1+cx_2+dx_3=e\}\cap\mathbb{S}^3,
\end{equation}
and the space of all (also degenerate) geodesic spheres:
\begin{equation}
\mathcal{G}:=\bigcup_{a,b,c,d,e\in\mathbb{R}} \mathcal{G}(a,b,c,d,e).
\end{equation}
Note that $\partial\mathcal{G}$ consists of the degenerate (i.e. point) spheres and thus is identified naturally with $\mathbb{S}^3$.

We will now define a space $\mathcal{S}$ of embedded two-spheres (including some controlled degenerations) in $S^3$. To this end, consider the spaces
\begin{equation}
\mathcal{X}:= \{\phi(\mathbb{S}^2)\;|\; \phi:\mathbb{S}^2\rightarrow\mathbb{S}^3 \mbox{ is a smooth embedding}\}
\end{equation}
and
$$\mathcal{Y}:= \{\phi(\mathbb{S}^2)\;|\; \phi:\mathbb{S}^2\rightarrow\mathbb{S}^3 \mbox{ is a smooth map whose image is a $1$d graph}\}.$$

Let us endow $\mathcal{X}\cup\mathcal{Y}$ with the flat metric.  To turn $\mathcal{X}\cup\mathcal{Y}$ into a metric space, we thus identify points of $\mathcal{Y}$ to a single point to obtain the quotient space $\mathcal{S}:=(\mathcal{X}\cup\mathcal{Y})/\mathcal{Y}$.

It follows from Hatcher's proof of Smale's conjecture (see item (14) in the Appendix \cite{Hatcher}) that the space $\mathcal{S}$ is homotopy equivalent to $\mathcal{G}/\partial\mathcal{G}$.

The space $\mathcal{G}$ is homeomorphic to $\mathbb{RP}^4\setminus B$ where $B$ is an open ball.  One can see this as follows.  Consider the subset of $\mathcal{P}\subset\mathcal{G}$ consisting of the great spheres, i.e., the intersection of $\mathbb{S}^3$ with a $3$-plane in $\mathbb{R}^4$ passing through the origin.  The space $\mathcal{P}$ is homeomorphic to $\mathbb{RP}^3$.  Recall from the Introduction that there is a natural map
\begin{equation}
\pi:\mathcal{G}\rightarrow\mathcal{P}
\end{equation}
sending a geodesic sphere to the great sphere it is parallel to.  This exhibits $\mathcal{G}$ as a twisted interval bundle over $\mathbb{RP}^3$, which gives that $\mathcal{G}$ is homeomorphic to $\mathbb{RP}^4\setminus B$. 

Combining the above, we see that $\mathcal{G}/\partial\mathcal{G}$ is obtained from $\mathbb{RP}^4\setminus B$ by identifying $\partial B$ to a point, and thus that $\mathcal{S}$ is homotopy equivalent to $\mathbb{RP}^4$. It follows that the homology groups are given by
\begin{equation}
H_k(\mathcal{S}, \mathbb{Z}_2)=\mathbb{Z}_2\mbox{ for each } k=1,2,3,4.
\end{equation}

For the purposes of Lusternik-Schnirelmann theory, it will be useful to also consider the cohomology ring, which is given by
\begin{equation}
H^\ast( \mathcal{S}, \mathbb{Z}_2)=\mathbb{Z}_2[\alpha]/(\alpha^5),
\end{equation}
where $\alpha$ denotes a generator of $H^1(\mathcal{S},\mathbb{Z}_2)$.

The group $H_1(\mathcal{S}, \mathbb{Z}_2)$ is generated by choosing a great sphere and considering the geodesic spheres parallel to it. The group $H_2(\mathcal{S}, \mathbb{Z}_2)$ is generated by the two-parameter family of spheres consisting of a non-trivial loop of great spheres in $\mathcal{P}$ together with the parallel geodesic spheres to each great sphere in the loop.  More explicitly, these two groups are represented by the one-parameter family
\begin{equation}
\mathbb{RP}^1\rightarrow \mathcal{G}, \qquad
[a_0,a_1]\mapsto \mathcal{G}(a_0,0,0,0,a_1),
\end{equation}
and the two-parameter family
\begin{equation}
\mathbb{RP}^2\rightarrow \mathcal{G},\qquad
[a_0,a_1,a_2]\mapsto \mathcal{G}(a_0,a_1,0,0,a_2).
\end{equation}

Now we will consider general sweep-outs into a three-sphere.  Let  $X$ be a simplicial complex and let
\begin{equation}
\Phi:X\rightarrow\mathcal{S}
 \end{equation}
be a sweep-out.
 Let $\omega$ denote a cohomology class in $H^*(\mathcal{S},\mathbb{Z}_2)$. Then we say $\Phi$ \emph{detects} $\omega$ if 
 \begin{equation}
\Phi^*(\omega)\neq 0.
 \end{equation}
 
Recall that $\alpha$ denotes a generator of $H^1(\mathcal{S},\mathbb{Z}_2)$.  The sweep-out $\Phi$ detecting $\alpha^i\in H^i(\mathcal{S}, \mathbb{Z}_2)$ is equivalent to the following (c.f. Section 4.1 in \cite{MN}):  there exists a cohomology class $\lambda$ in $H^1(X,\mathbb{Z}_2)$ with $\lambda^i$ non-zero so that if $\gamma:S^1\rightarrow X$ is a closed curve, then 
 $\Phi\circ\gamma:S^1\rightarrow\mathcal{S}$ is a nontrivial sweep-out of $M$ if and only if $\lambda(\gamma)\neq 0$.
 
 In particular, a smooth family of surfaces parameterized by $\mathbb{S}^1$ detects the generator $\alpha$ if and only if it is a non-trivial sweep-out of $M$.  
  
 For each $i\in\{1,2,3,4\}$ let $S_i$ be the set of all sweep-outs (parameterized by some complex) which detect $\alpha^i$.
  For each $i\in\{1,2,3,4\}$, we then define the min-max widths:
 \begin{equation}\label{def_width}
 \omega_i(M):= \inf_{\Phi\in S_i} \sup_{x\in Dom(\Phi)} |\Phi(x)|,
 \end{equation} 
 where $\mbox{Dom}(\Phi)$ denotes the domain of $\Phi$. 
Note from the definition that if a sweep-out detects $\alpha^i$, it detects $\alpha^j$ for any $1\leq j\leq i$.  It follows from this that $\omega_1(M)\leq \omega_2(M)\leq\omega_3(M)\leq \omega_4(M)$.  By the Isoperimetric Inequality (c.f. Appendix A in \cite{CD}) it follows that $\omega_1(M)>0$. 

 \begin{theorem}[Min-Max Theorem]\label{minmax}
 Let $M$ be a Riemannian $3$-sphere.  For each $i\in\{1,2,3,4\}$ is associated a stationary integral varifold $V_i$ 
  \begin{equation}
 V_i = \sum_{j=1}^{k_i} m_i^j \Sigma_i^j,
 \end{equation}
 \noindent
where for each $i$, $\{\Sigma_i^j\}_{j=1}^{k_i}$ is a collection of pairwise disjoint embedded minimal two-spheres and $m_i^j$ are positive integer multiplicities.

Moreover, there holds
 \begin{equation}\label{multi}
\omega_i(M)=|V_i|= \sum_{j=1}^{k_i} m_i^j |\Sigma_i^j|.
 \end{equation}
 and
 \begin{equation}
 0<\omega_1(M)\leq \omega_2(M)\leq\omega_3(M)\leq \omega_4(M).
 \end{equation}
We have the following index bounds for each $i\in\{1,2,3,4\}$:
\begin{equation}\label{indexbound}
\sum_{j=1}^{k_i} \mbox{index}(\Sigma^j_i) \leq i.
\end{equation}
\end{theorem}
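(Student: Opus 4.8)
The plan is to obtain each $V_i$ by applying the Simon--Smith min-max construction to the family $S_i$ of admissible sweep-outs, and then to quote the standard regularity, genus, and index estimates; concretely I would organize the argument in four stages. \textbf{Stage 1 (extracting the min-max varifold).} Fix $i\in\{1,2,3,4\}$ and pick a minimizing sequence $\{\Phi_n\}\subset S_i$ with $\sup_{x\in\mathrm{Dom}(\Phi_n)}|\Phi_n(x)|\to\omega_i(M)$. Running the pull-tight procedure together with the Almgren--Pitts combinatorial argument in the smooth Simon--Smith setting (as carried out by Colding--De Lellis \cite{CD}, in the multi-parameter form adapted to detecting $\alpha^i$ in the sense of Section~\ref{sec_topo}, cf. \cite{MN}), one produces an almost-minimizing min-max sequence of slices $\Sigma_n=\Phi_n(x_n)$ with $|\Sigma_n|\to\omega_i(M)$ that converges as varifolds to a stationary integral varifold $V_i$ with $\|V_i\|(M)=\omega_i(M)$; this is exactly \eqref{multi} once the structure of $V_i$ is known.

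\textbf{Stage 2 (structure of $V_i$ and the width inequalities).} By the Simon--Smith regularity theorem \cite{SS} (see also \cite{CD}), $\mathrm{spt}(V_i)$ is a smooth closed \emph{embedded} minimal surface in $M$, so its connected components $\Sigma_i^1,\dots,\Sigma_i^{k_i}$ are automatically pairwise disjoint; since $V_i$ is integral its density along each $\Sigma_i^j$ is a positive integer $m_i^j$, giving $V_i=\sum_{j=1}^{k_i}m_i^j\Sigma_i^j$. The genus bound for Simon--Smith min-max (the slices being two-spheres) forces every component to have genus zero, and as $M\cong\mathbb{S}^3$ contains no closed non-orientable surface, each $\Sigma_i^j$ is an embedded minimal two-sphere. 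For the ordering, a sweep-out detecting $\alpha^{i+1}$ also detects $\alpha^i$, hence $S_{i+1}\subseteq S_i$ and $\omega_i(M)\le\omega_{i+1}(M)$; and $\omega_1(M)>0$ by the isoperimetric inequality (Appendix~A of \cite{CD}), which in particular forces $k_i\ge 1$. Both points were essentially recorded before the statement.

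\textbf{Stage 3 (index bounds).} It remains to establish $\sum_{j=1}^{k_i}\mathrm{index}(\Sigma_i^j)\le i$, and this is the step I expect to demand the most care. I would invoke the upper index estimates for multi-parameter min-max minimal hypersurfaces of Marques--Neves, adapted to the Simon--Smith setting: since $\Phi$ is cohomologically an $i$-parameter family (there is $\lambda\in H^1(X;\mathbb{Z}_2)$ with $\lambda^i\neq 0$), any $(i+1)$-dimensional space of normal Jacobi fields along $\mathrm{spt}(V_i)$ could be used to deform the nearly-optimal family and strictly decrease the maximal area, contradicting the minimality defining $\omega_i(M)$. The delicate point is that the asserted bound is on the \emph{unweighted} sum $\sum_j\mathrm{index}(\Sigma_i^j)$, with no contribution from the multiplicities $m_i^j$; making this rigorous requires the localized deformation argument of Marques--Neves, and the main obstacle is to check that this argument goes through verbatim for sweep-outs of $\mathbb{S}^3$ by two-spheres — here the fact (Section~\ref{sec_topo}) that $\mathcal{S}\simeq\mathbb{RP}^4$ with cohomology ring $\mathbb{Z}_2[\alpha]/(\alpha^5)$ is what guarantees that the deformed families remain admissible competitors for $\omega_i(M)$.
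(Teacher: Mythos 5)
Your proposal is correct and follows essentially the same route as the paper, which does not prove Theorem \ref{minmax} in detail but attributes it to exactly the ingredients you assemble: Simon--Smith existence and regularity (with the $\mathbb{Z}_2$-coefficient and higher-parameter adaptations as in the appendix of \cite{CGK}) together with the Marques--Neves index upper bounds \cite{MN}. Your Stage 3 correctly identifies the only delicate point (the unweighted index bound for the support), and this is precisely the part the paper delegates to \cite{MN}.
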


The existence of the minimal surface realizing $\omega_1(M)$ in Theorem \ref{minmax} was proved by Simon-Smith \cite{SS} in 1983.  Simon-Smith explicitly considered smooth sweepouts with integer coefficients but the situation considered here with $\mathbb{Z}_2$ coefficients is the same.  The necessary changes for the spheres arising from higher parameter sweepouts are straightforward (see for instance the Appendix of \cite{CGK}).   The upper bound on the Morse index \eqref{indexbound} follows from work of Marques-Neves \cite{MN}.

\begin{remark}
Because of the possibility of integer multiplicities, note that Theorem \ref{minmax} only guarantees the existence of one minimal two-sphere $\Sigma$, as it may happen that $V_1=\Sigma$, $V_2=2\Sigma$, $V_3=3\Sigma$ and $V_4=4\Sigma$. 
It is natural to conjecture that for generic metrics, the multiplicities in \eqref{multi} are all $1$, and that Theorem \ref{minmax} should produce at least $4$ distinct embedded minimal two-spheres as predicted by Lusternik-Schnirelmann theory.
\end{remark}

\section{Optimal foliations}\label{sec_optimal}
Let $M$ be a manifold diffeomorphic to the three-sphere. 
We call a one parameter family of sets $\{\Sigma_t\subset M\}_{t\in [-1,1]}$ an \emph{optimal sweep-out} of $M$ by two-spheres if
\begin{enumerate}
\item $\Sigma_t$ is a smooth embedded two-sphere for all $t\in (-1,1)$
\item $\Sigma_{-1}$ and $\Sigma_1$ are one-dimensional graphs
\item $\Sigma_{0}$ is an unstable minimal two-sphere whose area realizes the $1$-width $\omega_1(M)$
\item $|\Sigma_t| < |\Sigma_{0}|$ for all $t\neq 0$
\item $\Sigma_t=\exp_{x\in \Sigma_0}(t\phi(x)\nu(x))$ for $t$ near $0$, where $\phi$ is the normalized first eigenfunction of the stability operator $L_{\Sigma_0}$. In particular, $|\Sigma_t| \leq |\Sigma_{0}|-ct^2$ for some $c>0$ and $t$ near $0$
\item $\Sigma_t$ varies smoothly for $t\in (-1,1)$ and
\item $\Sigma_t\rightarrow\Sigma_{\pm 1}$ in the Hausdorff topology as $t\rightarrow \pm 1$.
\end{enumerate}

We call an optimal sweep-out by two-spheres an \emph{optimal foliation} by two-spheres if in addition $\Sigma_s$ and $\Sigma_t$ are disjoint whenever $s\neq t$.\\

The goal of this section is to prove the following theorem:

\begin{theorem}[Existence of optimal foliations]\label{optimal}
Let $M$ be a Riemannian three-manifold diffeomorphic to $\mathbb{S}^3$.  Suppose $M$ admits no stable minimal two-spheres.  Then $M$ admits an optimal foliation by two-spheres. 
\end{theorem}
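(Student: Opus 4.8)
The plan is to build the optimal foliation in three stages, using Theorem \ref{main_foliation_theorem} as a black box to handle the hardest part. \textbf{Stage 1 (getting started from the Simon-Smith sphere).} By Theorem \ref{minmax} applied with $i=1$, and since by hypothesis $M$ has no stable embedded minimal two-spheres, the min-max procedure produces a varifold $V_1$ supported on embedded minimal two-spheres, all unstable; I claim one may extract from this a single embedded minimal two-sphere $\Sigma_0$ of index one realizing the width $\omega_1(M)$. (If $V_1$ were a multiple of a single sphere, that sphere realizes $\omega_1$; if $V_1$ is a sum, a standard argument using that $\omega_1>0$, the isoperimetric inequality, and the index bound $\sum \mathrm{index}(\Sigma_1^j)\le 1$ shows there is exactly one component with multiplicity one — this is the genuinely ``unstable'' slice of the width sweepout.) Since $\Sigma_0$ is unstable, the stability operator $L_{\Sigma_0}$ has a negative first eigenvalue with eigenfunction $\phi$; pushing $\Sigma_0$ off to either side along $\pm t\phi\nu$ for small $t$ produces two-spheres $\Sigma_t$ with $|\Sigma_t|\le|\Sigma_0|-ct^2$. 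This handles properties (3), (5), and the local part of (1), (4), (6).

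\textbf{Stage 2 (decomposing $M$ into mean convex discs).} Fix $t_0>0$ small. The spheres $\Sigma_{\pm t_0}$ bound a thin cylindrical region $Z\simeq \mathbb{S}^2\times[-t_0,t_0]$ containing $\Sigma_0$, and $M\setminus Z$ has two connected components whose closures $D_1,D_2$ are smooth embedded three-discs; by construction (pushing off along the first eigenfunction) the boundary spheres $\partial D_i=\Sigma_{\mp t_0}$ are mean convex, with the mean curvature vector pointing into $D_i$, provided $t_0$ is chosen small enough that the second-order expansion of area controls the sign of $H$. Thus each $D_i$ is a smooth three-disc with mean convex boundary, so Theorem \ref{main_foliation_theorem} applies: either $\mathrm{Int}(D_i)$ contains a stable embedded minimal two-sphere — impossible by hypothesis — or $D_i$ admits a smooth foliation $\{\Sigma^i_\tau\}_{\tau\in[0,1]}$ by mean convex embedded two-spheres, with $\Sigma^i_0=\partial D_i$ and $\Sigma^i_1$ a point (a one-dimensional graph, in the language of $\mathcal{Y}$). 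So we are always in case (2).

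\textbf{Stage 3 (assembling and reparametrizing into an optimal foliation).} Concatenate: for $t\in[-1,-t_0]$ run the foliation of $D_1$ backwards from the point to $\partial D_1=\Sigma_{-t_0}$, for $t\in[-t_0,t_0]$ use the eigenfunction push-off through $\Sigma_0$, and for $t\in[t_0,1]$ run the foliation of $D_2$ from $\partial D_2=\Sigma_{t_0}$ to the point. After a smooth reparametrization in $t$ near $\pm t_0$ (to match derivatives at the gluing, using that the mean convex leaves meet $\Sigma_{\pm t_0}$ transversally) this is a smooth family satisfying (1), (2), (6), (7), and the disjointness condition — disjointness holds because a smooth foliation has disjoint leaves, and the three pieces are nested by construction. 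It remains to upgrade (4) to the global strict inequality $|\Sigma_t|<|\Sigma_0|$ for all $t\ne 0$: mean convexity of each leaf means that the first variation of area under the outward normal flow is strictly negative (the mean curvature is nowhere zero on a leaf unless the leaf is minimal, which is excluded since there are no stable spheres and — if needed — one can perturb to ensure the foliation leaves have nonvanishing $H$), so area is strictly monotone along the foliation of each $D_i$, giving $|\Sigma_t|<|\partial D_i|\le|\Sigma_0|$; combined with the $ct^2$ estimate on $Z$ this yields (4) on all of $[-1,1]\setminus\{0\}$.

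\textbf{Main obstacle.} The real content is Theorem \ref{main_foliation_theorem}, which we are permitted to assume. Granting that, the chief technical points here are: (i) extracting a single index-one sphere of multiplicity one realizing $\omega_1$ from the min-max varifold $V_1$ — this needs the no-stable-spheres hypothesis together with the index bound and positivity of the width; and (ii) matching the foliation of $Z$ with the foliations of $D_1,D_2$ in a $C^\infty$ fashion and arranging strict area monotonicity globally, which requires care that the mean curvature of the foliation leaves does not vanish. Neither is deep, but (ii) is where one must be slightly careful with the reparametrization and with a possible small perturbation of the leaves near $\partial D_i$.
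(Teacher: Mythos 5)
Your Stages 2 and 3 (pushing the center sphere off along the first eigenfunction to get $M=D_1\cup Z\cup D_2$, applying Theorem \ref{main_foliation_theorem} to the mean convex discs, and concatenating) are exactly the paper's argument and are fine. The genuine gap is in Stage 1. You claim that from the min-max varifold $V_1$ one can extract a single embedded minimal two-sphere of index one, \emph{with multiplicity one}, realizing $\omega_1(M)$, and you justify this by ``$\omega_1>0$, the isoperimetric inequality, and the index bound $\sum_j \mathrm{index}(\Sigma_1^j)\le 1$.'' Those ingredients do give that $V_1$ is supported on a single component of index one (no stable spheres plus the index bound), but they say nothing about its integer multiplicity: a priori $V_1=m\Sigma$ with $m\ge 2$ is perfectly consistent with all of them, and in that case $|\Sigma|=\omega_1(M)/m<\omega_1(M)$, so your $\Sigma_0$ would fail property (3) of an optimal sweep-out. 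Your parenthetical ``if $V_1$ were a multiple of a single sphere, that sphere realizes $\omega_1$'' is simply false when the multiple is $\ge 2$. Ruling out multiplicity is precisely the central difficulty emphasized throughout the paper (see the Remark after Theorem \ref{minmax} and the multiplicity one conjecture), so it cannot be waved away as a ``standard argument.''

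The paper circumvents this by \emph{not} taking the center sphere from the min-max varifold. Instead (Lemma \ref{lemma_width}) one considers $\gamma(M)=\inf\{|\Sigma|:\Sigma\ \text{embedded minimal two-sphere}\}$; a minimizing sequence has uniformly bounded area, hence subconverges to a smooth minimal two-sphere, and the limit multiplicity is forced to be one because multiplicity $k\ge2$ would give a sphere of area $\le|\Sigma_i|/k$, contradicting minimization. This least-area sphere has index one by Lemma 3.5 of \cite{MN2} (using the no-stable-spheres hypothesis), and the foliation is built around \emph{it}. The identification $|\Sigma_0|=\omega_1(M)$ is then obtained a posteriori: the constructed foliation is itself a sweep-out detecting $\alpha$ whose maximal slice has area $\gamma(M)$, so $\omega_1(M)\le\gamma(M)$, while $\omega_1(M)\ge\gamma(M)$ is immediate since the min-max varifold consists of minimal spheres each of area at least $\gamma(M)$. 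If you want to keep your Stage 1, you would need to restructure it in this way (choose a least-area or otherwise multiplicity-free sphere first, build the foliation, and only then conclude it realizes the width); as written, the multiplicity-one and width-realization claims are unsupported.
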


Recall that the 1-width $\omega_1(M)$ is defined in \eqref{def_width} as min-max value over sweep-outs detecting $\alpha$. Let us also consider the quantity
\begin{equation}\label{mintwosphere}
\gamma(M):= \inf_{\Sigma\in\mathcal{S}_{\min}} |\Sigma |,
\end{equation}
where $\mathcal{S}_{\min}$ denotes the space of embedded minimal two-spheres in $M$.

We need the following lemma (c.f. Lemma 3.5 in \cite{MN2}):
\begin{lemma}[{Realizing the 1-width}]\label{lemma_width}
Let $M$ be a three-sphere containing no stable embedded two-spheres.  
Then the infimum in \eqref{mintwosphere} is achieved by an index one minimal two-sphere $\Sigma$ with multiplicity one. Moreover, $\Sigma$ realizes the $1$-width of $M$, in particular $\gamma(M)=\omega_1(M)$.
\end{lemma}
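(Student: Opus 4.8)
The plan is to argue in two directions: first produce a minimal two-sphere achieving $\gamma(M)$ by a direct minimization argument within $\mathcal{S}_{\min}$, then identify it with the min-max sphere realizing $\omega_1(M)$ via a comparison of widths. For the first part, I would take a minimizing sequence $\Sigma_k \in \mathcal{S}_{\min}$ with $|\Sigma_k| \to \gamma(M)$. Since $M$ is compact, the areas are bounded, so by standard compactness for minimal surfaces with bounded area and bounded index (here all the $\Sigma_k$ have index at most one, as a minimizer of area among minimal spheres cannot have index $\geq 2$ — one can decrease area by a two-parameter deformation, contradicting $\gamma(M)$ being the infimum), the sequence converges (subsequentially, in the varifold sense and smoothly away from finitely many points) to a minimal surface $\Sigma_\infty$, possibly with multiplicity and possibly with some area lost. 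The key structural input is that there are \emph{no} stable embedded minimal two-spheres: this rules out the limit being a stable sphere, and more importantly, if the convergence had multiplicity $\geq 2$ on a sphere component, or if a stable sphere bubbled off, we would contradict the no-stable-sphere hypothesis. So $\Sigma_\infty$ is a single embedded minimal two-sphere with multiplicity one, $|\Sigma_\infty| = \gamma(M)$, and it has index one (it cannot be stable, and it cannot have index $\geq 2$ by the area-minimality among minimal spheres). This establishes that the infimum in \eqref{mintwosphere} is achieved by an index one minimal two-sphere $\Sigma$ counted with multiplicity one.

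For the second part, I would show $\gamma(M) = \omega_1(M)$ by proving both inequalities. For $\gamma(M) \geq \omega_1(M)$: given the index one minimal sphere $\Sigma$ realizing $\gamma(M)$, its instability lets us push it off to both sides using the first eigenfunction of the stability operator, decreasing area; continuing this into a sweep-out of $M$ (completing the family until each side degenerates to a point, which is possible since each complementary region is a topological ball) produces a sweep-out detecting $\alpha$ with $\sup$-area equal to $|\Sigma|$, hence $\omega_1(M) \leq |\Sigma| = \gamma(M)$. The delicate point here is arranging the continuation of the sweep-out so that no slice exceeds $|\Sigma|$ — this is precisely the kind of statement that Theorem \ref{main_foliation_theorem} / Theorem \ref{optimal} will eventually supply, but for this lemma one only needs a sweep-out with $\sup$-area \emph{at most} $|\Sigma|$, which follows from continuity of area once one knows $|\Sigma_t| < |\Sigma|$ for $t$ near $0$ and area-minimizing (or just mean-convex shrinking) deformations on each side. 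For $\gamma(M) \leq \omega_1(M)$: apply the Min-Max Theorem \ref{minmax} with $i = 1$, which produces a stationary integral varifold $V_1 = \sum_j m_1^j \Sigma_1^j$ with $\omega_1(M) = \sum_j m_1^j |\Sigma_1^j|$ and $\sum_j \mathrm{index}(\Sigma_1^j) \leq 1$. Every component $\Sigma_1^j$ is an embedded minimal two-sphere, hence lies in $\mathcal{S}_{\min}$, so $|\Sigma_1^j| \geq \gamma(M)$; since the $\Sigma_1^j$ are nonempty and the multiplicities positive, $\omega_1(M) \geq \gamma(M)$. Combining, $\gamma(M) = \omega_1(M)$, and moreover the two inequalities being equalities forces $V_1$ to be a single sphere with multiplicity one realizing $\gamma(M)$ — consistent with the sphere $\Sigma$ found above, which (being index one, multiplicity one, of least area) we can take as the min-max sphere.

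The main obstacle I anticipate is the compactness/no-bubbling argument in the first part: carefully excluding multiplicity and excluding the formation of a stable bubble in the varifold limit of the minimizing sequence $\Sigma_k$. The index bound (index $\leq 1$ along the sequence) gives smooth convergence away from at most one point, and a standard removable-singularity plus bubbling analysis shows any lost area must concentrate as a stable minimal sphere (or a higher-multiplicity piece whose multiplicity-one leaf would itself be stable or low-index) — and the hypothesis that $M$ contains no stable embedded minimal two-sphere is exactly what closes this off. One must also handle the borderline case where the limit is a single sphere with multiplicity two: there the "one-sided" minimal sphere obtained as the multiplicity-one leaf would be one-sided, i.e. its double cover; but embedded minimal two-spheres are two-sided, so this cannot occur. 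I would cite Lemma 3.5 in \cite{MN2} for the precise compactness statement and adapt it to the two-sphere setting, noting that the topological control (all surfaces are spheres) actually simplifies the argument.
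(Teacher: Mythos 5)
Your overall skeleton matches the paper's (minimizing sequence plus compactness to achieve $\gamma(M)$; then the two inequalities between $\gamma(M)$ and $\omega_1(M)$, with the easy one coming from Theorem \ref{minmax}), but two of your steps have genuine gaps. First, the index-one claim. Your justification --- that a least-area minimal sphere cannot have index $\geq 2$ because ``one can decrease area by a two-parameter deformation, contradicting $\gamma(M)$ being the infimum'' --- is a non sequitur: $\gamma(M)$ in \eqref{mintwosphere} is an infimum over \emph{minimal} spheres, so deforming $\Sigma$ to nearby non-minimal spheres of smaller area contradicts nothing. To get a contradiction one must convert such a deformation into a \emph{minimal} sphere of area strictly less than $|\Sigma|$, which requires a genuine min-max/replacement argument; this is exactly the content of Lemma 3.5 in \cite{MN2}, which the paper cites for the index statement (it is not a compactness lemma, as your closing paragraph suggests). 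Relatedly, your asserted index bound along the minimizing sequence is both unjustified (the $\Sigma_k$ are not minimizers) and unnecessary: compactness follows from the uniform area bound together with genus zero (Choi--Schoen type, as in \cite{ChoiSchoen}), and multiplicity $k\geq 2$ in the limit is excluded most simply by the area argument $|\Sigma|\leq \gamma(M)/k<\gamma(M)$, which is the paper's route; your Jacobi-field/stability exclusion also works, but the ``one-sided leaf'' scenario you worry about cannot arise for spheres in a three-sphere and is a red herring.

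Second, and more seriously, the inequality $\omega_1(M)\leq\gamma(M)$. You correctly identify that one must extend the normal push-off of $\Sigma$ to a sweep-out of each complementary ball with all slices of area at most $|\partial D_i|$, but your claim that this ``follows from continuity of area once one knows $|\Sigma_t|<|\Sigma|$ for $t$ near $0$ and area-minimizing (or just mean-convex shrinking) deformations on each side'' is not a proof: this is precisely where the hard work and the no-stable-sphere hypothesis enter a second time. Without that hypothesis the statement is false (a mean convex ball containing a stable minimal sphere can have width larger than its boundary area --- compare the dichotomy in Theorem \ref{main_foliation_theorem}); smooth mean convex shrinking develops singularities, and an area-minimizing sequence does not yield a sweep-out. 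The paper closes this step either via the mean curvature flow with surgery foliation (Theorem \ref{main_foliation_theorem}, through the proof of Theorem \ref{optimal}, which is quotable here without circularity since it does not use the `moreover' part of the lemma), or, as in the remark after Theorem \ref{optimal}, via min-max for mean convex boundary (Theorem 2.1 in \cite{MN2}) combined with Lemma \ref{mustintersect}, whose proof itself uses the level set flow. Your deferral to Theorem \ref{optimal} would be acceptable; the proposed elementary shortcut is not.
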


\begin{proof}
By Simon-Smith's Theorem (i.e. the $i=1$ case in Theorem \ref{minmax}) the set $\mathcal{S}_{\min}$ is non-empty and thus \eqref{mintwosphere} is well-defined. Since the monotonicity formula gives a lower bound for the area of minimal surfaces, we see that $\gamma(M)>0$.

If $\mathcal{S}_{\min}$ is a finite set, then the infimum \eqref{mintwosphere} is achieved by a minimal two-sphere $\Sigma$.  
If $\mathcal{S}_{\min}$ is infinite we claim the infimum is still attained.  To see this, let $\Sigma_i\in\mathcal{S}_{\min}$ be a minimizing sequence with $|\Sigma_i|\rightarrow\gamma(M)$.  Since the areas of $\Sigma_i$ are uniformly bounded, after passing to a subsequence they converge to a smooth minimal two-sphere $\Sigma$, potentially with multiplicity $k$.  If $k>1$, then $|\Sigma|\leq |\Sigma_i|/k$ which contradicts the fact that $\Sigma_i$ was a minimizing sequence.  Thus $k=1$ and $\Sigma_i\rightarrow \Sigma$ smoothly.   It follows that $|\Sigma|=\gamma(M)$.

Since $M$ contains no stable minimal two spheres, $\Sigma$ is unstable.  It then follows from Lemma 3.5 in \cite{MN2}, that $\Sigma$ must have index one.\\
Observe that $\omega_1(M)\geq \gamma(M)$, since otherwise by Simon-Smith's Theorem (i.e. the $i=1$ case in Theorem \ref{minmax}) we could produce a minimal-two sphere with smaller area contradicting the definition of $\gamma(M)$.

Finally, the argument below (which is proven without using the `moreover'-part of the lemma) shows that $\Sigma$ is the maximal slice of an optimal sweep-out of $M$ by two spheres, and thus $\Sigma$ realizes $\omega_1(M)$, and in particular $\omega_1(M)\leq \gamma(M)$.
\end{proof}

\begin{proof}[{Proof of Theorem \ref{optimal} (using Theorem \ref{main_foliation_theorem})}]
By assumption $M$ does not contain any stable minimal two-spheres. Thus, by Lemma \ref{lemma_width} there exists a minimal two-sphere $\Sigma$ which realizes the infimum in \eqref{mintwosphere}, and moreover has index one and multiplicity one.

Choose a unit normal $\nu$ on $\Sigma$. Let $\phi$ denote the lowest eigenfunction of the stability operator $L_\Sigma$ with eigenvalue $\lambda$, and normalized in $L^2$. Since the lowest eigenfunction doesn't change sign, we can assume that $\phi$ is positive.
Consider the family
\begin{equation}
\Sigma_t:= \exp_{x\in\Sigma}(t\phi(x)\nu(x)),
\end{equation}
where the parameter $t$ is small enough. Note that for $\eps$ small enough $\{\Sigma_t\}_{t\in [-\eps,\eps]}$ gives a foliation by smooth embedded two-spheres of a cylindrical neighborhood $Z$ of $\Sigma$. Moreover, Taylor expansion gives
\begin{equation}\label{areas2}
|\Sigma_t|=|\Sigma|-\frac{\lambda}{2} t^2+\mathcal{O}(t^3),
\end{equation}  
and
\begin{equation}
H_{\Sigma_t}= \lambda \phi t+\mathcal{O}(t^2).
\end{equation}
Thus, if we choose $\eps$ small enough we obtain a decomposition
\begin{equation}
N=D_1 \cup Z \cup D_2,
\end{equation}
where $Z$ is foliated by smooth embedded two-spheres $\{\Sigma_t\}_{t\in [-\eps,\eps]}$ with
\begin{equation}
|\Sigma_t|\leq |\Sigma_0| - \frac{\lambda}{4} t^2,
\end{equation}
in particular $|\Sigma_t|< |\Sigma_0|$ for $t\neq 0$,
and where $D_1$ and $D_2$ are smooth embedded three-discs with mean convex boundary.

Theorem \ref{main_foliation_theorem} provides smooth foliations $\{\hat{\Sigma}^i_t\}_{t\in [0,1]}$ of the the discs $D_i$ ($i=1,2$) by a smooth family of mean convex embedded two-spheres, which in particular satisfies
\begin{equation}
|\hat{\Sigma}^i_t|\leq |\partial D_i|,
\end{equation}
$\hat{\Sigma}^i_0=\partial D_i$, and for $t\to 1$ terminates in one-dimensional graphs. Concatenating these foliations with the foliation of $Z$ we conclude that
\begin{equation}
\Sigma_t'= \left\{\begin{array}{ll}
         \hat{\Sigma}^1_{(-t-\eps)/(1-\eps)}, & \text{for }  -1\leq t\leq -\eps\\
        \Sigma_{t}, & \text{for } -\eps\leq t \leq \eps\\
        \hat{\Sigma}^2_{(t-\eps)/(1-\eps)}, & \text{for } \eps\leq t\leq 1
        \end{array}\right.
\end{equation}
(or to be technically precise, rather a smoothed out concatenation of that) gives the desired optimal foliation of $M$ by two-spheres.
\end{proof}

\begin{remark}
An optimal sweep-out can also be produced via min-max methods as follows:
Since $D_{i}$ is mean convex, we can then consider sweep-outs of $D_i$ by two-spheres and consider the min-max problem for manifolds with mean convex boundary.  If the width $W(D_i)$ of $D_i$ was greater than $|\partial D_i|$ then we could infer (c.f. Theorem 2.1 in \cite{MN2}) that $D_i$ contains in its interior a minimal two-sphere, necessarily disjoint from $\Sigma$. This contradicts the fact that in a three-sphere without stable minimal two-spheres any two embedded minimal two-spheres must intersect (see Lemma \ref{mustintersect}).  It follows that $W(D_i)\leq |\partial D_i|$.   Thus we can concatenate a nearly optimal sweep-outs of $D_i$ (that increases area above $W(D_i)$ by less than $|\Sigma_0|-|\Sigma_{\pm\epsilon}|)$ with the cylindrical family on $Z$, to obtain an optimal sweep-out of $M$. However, it can happen that different slices of this optimal sweep-out intersect each other, i.e. the min-max approach in general only produces an optimal sweep-out, but not an optimal foliation.
\end{remark}

Let us end this section with the following standard lemma, which has been used in the above remark. The lemma will also be used in Section \ref{sec_main}, and can also be used to check that the spheres in Theorem \ref{main} indeed have the intersection properties as stated in the introduction.

\begin{lemma}\label{mustintersect}
Let $M$ be a three-sphere containing no stable embedded minimal two-spheres.  Then any two minimal embedded two-spheres in $M$ intersect.
\end{lemma}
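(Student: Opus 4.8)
The plan is to argue by contradiction: suppose $\Sigma_1$ and $\Sigma_2$ are two disjoint embedded minimal two-spheres in $M$. Since $M$ is diffeomorphic to $\mathbb{S}^3$, each embedded two-sphere separates $M$ into two three-balls. Because $\Sigma_1$ and $\Sigma_2$ are disjoint, one of the two components of $M\setminus\Sigma_1$ — call it $U$ — has the property that $\Sigma_2$ lies in its closure, and in fact $\Sigma_1$ and $\Sigma_2$ together bound a region $\Omega\subset M$ whose boundary is $\Sigma_1\cup\Sigma_2$ (i.e., $\Omega$ is the closure of the component of $M\setminus(\Sigma_1\cup\Sigma_2)$ adjacent to both). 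Topologically $\Omega$ is a cylinder $\mathbb{S}^2\times[0,1]$. The first step is to set up this topological picture carefully and fix the region $\Omega$.

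Next I would run a minimization argument inside $\Omega$. Since $\partial\Omega=\Sigma_1\cup\Sigma_2$ consists of minimal surfaces, both boundary components are (weakly) stable barriers in the sense needed for geometric measure theory minimization; in particular the minimal surfaces $\Sigma_1,\Sigma_2$ are their own mean-curvature-zero boundaries, so one can minimize area in the isotopy class of a central slice $\mathbb{S}^2\times\{1/2\}$ within $\Omega$ — or, more robustly, minimize in the homology class $[\Sigma_1]=[\Sigma_2]\in H_2(\Omega,\mathbb{Z})$, which is nontrivial since $\Omega$ is a product. Standard results (Meeks–Simon–Yau, or the Simon–Smith-type minimization) produce an embedded minimal surface $\Sigma$ that minimizes area in this class. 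By the maximum principle $\Sigma$ cannot touch $\Sigma_1$ or $\Sigma_2$ unless it coincides with one of them; in either degenerate case $\Sigma_i$ itself would be stable (being area-minimizing in its homology class, at least on one side), contradicting the hypothesis. So $\Sigma$ lies in the interior of $\Omega$ and is a genuine interior minimizer, hence stable. The only remaining point is that $\Sigma$ is a two-sphere: since it is area-minimizing in the homology class of the $\mathbb{S}^2$-fiber in the product $\mathbb{S}^2\times[0,1]$, it is incompressible and therefore a two-sphere (a higher-genus incompressible surface cannot represent the fiber class). This produces a stable embedded minimal two-sphere in $M$, contradicting the assumption, and completes the proof.

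The main obstacle I anticipate is the regularity and topology control in the minimization step: one must invoke a version of the Meeks–Simon–Yau theorem (minimization among embedded surfaces in a fixed isotopy/homology class within a region with minimal, hence mean-convex-type, boundary) and verify that the minimizer is smooth, embedded, has the correct topological type ($\mathbb{S}^2$ rather than a surface of higher genus or a non-separating component), and does not run off to the boundary. The boundary behaviour is handled by the strong maximum principle together with the observation that $\Sigma_1$, $\Sigma_2$ are two-sided minimal surfaces; the topology is handled by the incompressibility that area minimization forces in a product region. An alternative, perhaps cleaner, route avoiding some of these technicalities: note that $\Sigma_1$ is unstable, so there is a normal variation strictly decreasing area; push $\Sigma_1$ into $\Omega$ to get a surface of area $<|\Sigma_1|$, then run mean curvature flow (or the level-set flow) starting from this pushed-in sphere inside the mean-convex region between the two minimal spheres — by the avoidance principle the flow stays in $\Omega$, and it must converge to a stable minimal two-sphere in the interior, since it cannot become extinct (it is trapped between $\Sigma_1$ and $\Sigma_2$) and cannot converge to $\Sigma_1$ or $\Sigma_2$ (which would force stability of those). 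Either way the contradiction is the same.
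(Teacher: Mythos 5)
Your ``alternative, perhaps cleaner, route'' at the end is essentially the paper's own proof: push the unstable sphere $\Sigma_1$ off toward $\Sigma_2$ and run the level set flow, which by avoidance can never reach $\Sigma_2$ and hence can never become extinct, while the absence of stable minimal two-spheres (via White's structure theory for mean convex level set flow, together with genus monotonicity so that any stable limit component would be a sphere) forces extinction --- contradiction. Two small points of hygiene there: the push-off should be done with the first eigenfunction of the stability operator, since what you really need is not just a surface of smaller area but a \emph{mean convex} initial condition for White's theory (it is the surface, not the region, that must be mean convex), and the claim ``it must converge to a stable minimal two-sphere'' is exactly where genus monotonicity of the flow enters. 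With those remarks, this half of your proposal is correct and is the same argument as in the paper.

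Your primary route (minimization in the product region $\Omega\cong\mathbb{S}^2\times[0,1]$) has a genuine gap at the topology-control step. Since the hypothesis only excludes stable \emph{spheres}, a stable minimal torus in the interior of $\Omega$ would be no contradiction, so you must prove the interior minimizer is a two-sphere. Your justification --- that an area minimizer in the fiber homology class is incompressible --- is not established and is in fact circular in this setting: $\Omega$ is simply connected, so ``incompressible closed surface'' already means ``two-sphere'', i.e.\ you are asserting what needs to be proved. The standard incompressibility theorems (Schoen--Yau, Freedman--Hass--Scott, Meeks--Simon--Yau) are about minimization in homotopy or isotopy classes, not homology classes, and the naive argument fails for homology minimizers because compressing along a compressing disc $D$ replaces an annular neighborhood of $\partial D$ by two parallel copies of $D$ and therefore \emph{increases} area by roughly $2|D|$; nothing a priori prevents the homological minimizer from being a compressible stable surface of positive genus. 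The fix is the option you mention only in passing: minimize in the \emph{isotopy} class of the central sphere via Meeks--Simon--Yau, whose $\gamma$-reduction performs surgeries on spheres and so yields limit components that are spheres; one then handles the boundary exactly as you indicate (strong maximum principle, and if the limit coincides with $\Sigma_i$ then $\Sigma_i$ is one-sided minimizing, hence stable because the first eigenfunction has a sign) and invokes the appropriate form of the Meeks--Simon--Yau theorem to get stability of the interior limit, rather than plain GMT minimization in homology. With that repair the minimization route works, but as written the incompressibility step is a real gap.
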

\begin{proof}
Let $\Sigma_1$ and $\Sigma_2$ denote two embedded minimal two-spheres, necessarily unstable.  Assume they are disjoint from each other.  One can push off $\Sigma_1$ using the lowest eigenfunction of the stability operator in the direction of $\Sigma_2$, and then consider the level set flow from this mean convex initial condition. Since $M$ contains no stable two-spheres, by the work of White \cite{White_topology,White_size} the flow becomes extinct in finite time and produces a singular foliation of the region enclosed by the initial condition. On the other hand, by the avoidance principle the flow must stay disjoint from $\Sigma_2$; this gives the desired contradiction.
\end{proof}

\section{Two parameter families}\label{sec_twopar}
In this section, we apply the catenoid estimate \cite{KMN} together with the results of the previous sections to show:
\begin{theorem}\label{twoparam}
Let $M$ be a Riemannian $3$-sphere not containing any stable embedded minimal spheres, and let $\{\Sigma_t\}_{t\in [-1,1]}$ be an optimal foliation of $M$ with the 1-width $\omega_1(M)$ realized by the minimal two-sphere $\Sigma_0$.  Then there exists a two parameter sweep-out $\{\Gamma_x\}_{x\in\mathbb{RP}^2}$ of $M$ detecting $\alpha^2$ and with the property that
\begin{equation}\label{totalarea}
\sup_{x\in\mathbb{RP}^2}|\Gamma_{x}| < 2|\Sigma_0|.
\end{equation}
In particular
\begin{equation}
\omega_2(M)<2\omega_1(M).
\end{equation}
\end{theorem}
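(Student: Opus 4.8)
The plan is to construct the two-parameter family $\{\Gamma_x\}_{x\in\mathbb{RP}^2}$ by taking, for a generic pair of parameters $(s,t)$, a connected sum of the two (disjoint!) slices $\Sigma_s$ and $\Sigma_t$ of the optimal foliation, joined by a thin tube, and then to carefully control the area near the diagonal $s=t$ and near the degenerate ends $s,t\to\pm1$, using the catenoid estimate of Ketover–Marques–Neves \cite{KMN} to beat the naive bound $\sup|\Gamma_x|\approx 2|\Sigma_0|$.

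First I would set up the domain. Parametrize the foliation by $t\in[-1,1]$, so that $\Sigma_{\pm1}$ are one-dimensional graphs and $\Sigma_0$ is the min-max sphere. Given $s\neq t$, since $\{\Sigma_t\}$ is a \emph{foliation}, $\Sigma_s\cap\Sigma_t=\emptyset$, so there is a well-defined (up to isotopy) embedded sphere $\Sigma_s\#\Sigma_t$ obtained by removing a small disc from each and gluing in a thin neck along a fixed path in $M$ transverse to the foliation; call the isotopy parameter of the neck-width $\delta$. When $s=t$ the two spheres coincide; when one of $s,t$ equals $\pm1$ the corresponding slice degenerates to a graph and the connected sum degenerates to the other sphere (a $1$-d graph contributes no area). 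This produces, after quotienting appropriately by the symmetry $(s,t)\mapsto(t,s)$ and collapsing the degenerate locus, a two-parameter family parametrized by $\mathbb{RP}^2$. I would check the topological detection statement — that $\{\Gamma_x\}$ detects $\alpha^2$ — by using the description of $H^2(\mathcal{S},\mathbb{Z}_2)$ from Section \ref{sec_topo}: a loop in the parameter $\mathbb{RP}^2$ that links the degenerate locus is sent to a nontrivial sweep-out of $M$ (roughly, it sweeps out the region "below" the neck together with a full sweep by the foliation), so $\Phi^*\alpha\neq0$ on that loop, and squaring gives $\Phi^*\alpha^2\neq0$; this is the standard connected-sum construction used in \cite{MN, KMN}, adapted to the foliation setting.

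Next comes the area estimate, which is the heart of the matter. For $\delta$ small the tube has negligible area, so
\begin{equation}
|\Gamma_{(s,t)}| \leq |\Sigma_s| + |\Sigma_t| + o_\delta(1) \leq 2|\Sigma_0| + o_\delta(1),
\end{equation}
with strict inequality $|\Sigma_s|+|\Sigma_t|<2|\Sigma_0|$ whenever $(s,t)$ stays away from $(0,0)$, by property (4) of the optimal foliation. So the only danger is near $(s,t)=(0,0)$, where $|\Sigma_s|+|\Sigma_t|$ is close to $2|\Sigma_0|$ and the $o_\delta(1)$ tube contribution could in principle spoil the strict inequality. This is exactly the situation the catenoid estimate is designed for: instead of a cylindrical neck, one opens the neck near $(0,0)$ using a catenoidal profile scaled appropriately with the distance to $(0,0)$; because $\Sigma_0$ is a \emph{strictly unstable} minimal sphere (index one, by Lemma \ref{lemma_width}), property (5) gives the quadratic area deficit $|\Sigma_t|\leq|\Sigma_0|-ct^2$, and the catenoid estimate shows the area gained by inserting the catenoidal neck of the appropriate width is strictly smaller than this deficit. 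Quantitatively: near the origin, $|\Gamma_{(s,t)}| \leq 2|\Sigma_0| - c(s^2+t^2) + (\text{neck area})$, and the catenoid estimate produces a neck whose area is $o(s^2+t^2)$ (in fact controlled by $(s^2+t^2)/|\log(\cdot)|$-type terms), so the whole thing is $<2|\Sigma_0|$ in a neighborhood of the origin; on the complement of that neighborhood the strict inequality already holds with room to spare, so one fixes $\delta$ small enough there. Combining the two regions gives \eqref{totalarea}.

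I expect the main obstacle to be making the interpolation between the "catenoid region" near $(0,0)$ and the "thin tube region" elsewhere genuinely continuous in the parameter while keeping the area strictly below $2|\Sigma_0|$ \emph{uniformly} — i.e. choosing the neck width as a continuous function of $(s,t)$ that equals the catenoid-optimal width near the origin and tapers to a fixed small $\delta$ away from it, verifying that no intermediate scale causes the area to jump back up. This is precisely the kind of gluing bookkeeping carried out in \cite{KMN} and \cite{MN}, and I would lean on those constructions. The final conclusion $\omega_2(M)<2\omega_1(M)$ is then immediate: $\{\Gamma_x\}$ detects $\alpha^2$, so $\omega_2(M)\leq\sup_x|\Gamma_x|<2|\Sigma_0|=2\omega_1(M)$, using $|\Sigma_0|=\omega_1(M)$ from Lemma \ref{lemma_width}.
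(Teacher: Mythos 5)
Your proposal follows essentially the same route as the paper's proof: join the disjoint slices $\Sigma_s$ and $\Sigma_t$ of the optimal foliation by thin necks, let the family degenerate along the diagonal and quotient by the flip $(s,t)\mapsto(t,s)$ to obtain an $\mathbb{RP}^2$-parametrized family, detect $\alpha^2$ by restricting to a loop whose image is the foliation itself, and use the catenoid estimate near $(s,t)=(0,0)$, where the quadratic deficit $|\Sigma_t|\leq|\Sigma_0|-ct^2$ must beat the neck contribution, to obtain the strict bound $\sup_x|\Gamma_x|<2|\Sigma_0|$. The paper implements exactly this scheme via a retraction map, a neck-size function $\phi(s,t)$, and a logarithmic cutoff, with the transition-region bookkeeping that you correctly identify as the main remaining work.
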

\begin{proof}

It follows from the definition of optimal foliation (see Section \ref{sec_optimal}) that there exists a $c>0$ so that
\begin{equation}\label{away2}
|\Sigma_t| \leq |\Sigma_{0}|- ct^2
\end{equation}
at first for $|t|$ small, but after decreasing $c$, for all $t\in [-1,1]$.

By following two points on the foliation via normal motion we get two arcs $\alpha:[-1,1]\to M$ intersecting each surface $\Sigma_t$ precisely once, and  disjoint for $t\in (-1,1)$.  There exists a retraction map 
\begin{equation}
R:(M\setminus\alpha) \times [0,1]\rightarrow M\setminus\alpha
\end{equation}
\noindent
satisfying the following properties (where $\Sigma_t':=\Sigma_t\setminus \alpha(t)$):
\begin{enumerate}
\item $R(\Sigma_t', \mu)\subset \Sigma_t'$ for all $t$ and $\mu$
\item $R(\Sigma_t',\mu_2)\subset R(\Sigma_t',\mu_1)$ for all $\mu_2\geq\mu_1$ and $t$
\item $R(a,0)=a$ for all $a\in M\setminus\alpha$
\item $R(\Sigma_t',1)=\beta(t)$
\item for $\mu$ small enough (depending on $t\neq \pm 1$), $R(\Sigma_t',\mu)$ is a disc-type surface with boundary $\{c\;|\; \mbox{dist}_{\Sigma_t'}(\alpha(t),c)=\mu\}$
\end{enumerate}

Let us also define the annuli
\begin{equation}
C(s,t,\mu)= \bigcup_{r\in [s,t]}\partial R(\Sigma_r',\mu),
\end{equation}
where $\partial R(\Sigma_r',\mu)$ is the (circle) boundary of $R(\Sigma_r',\mu)$.

Note that by continuity, for all $a>0$ there exists $b_1(a)>0$ so that whenever $|s-t|\leq b_1(a)$  then 
\begin{equation}\label{b1choice}
|C(s,t,\mu)| < a \mbox{  for any } \mu .
\end{equation}

Moreover, for all $a>0$ there exists $b_2(a)>0$ so that whenever $\mu< b_2(a)$ for any $s,t\in [-1,1]$ there holds
\begin{equation}
|C(s,t,\mu)|\leq |C(-1,1,\mu)|<a.
\end{equation} 

Finally we can define first the sets
\begin{equation}
\Gamma_{s,t}^\vee:= \Sigma_s\cup\Sigma_t,
\end{equation}
\noindent
and then the surfaces
\begin{equation}
\Gamma'_{s,t}:= R(\Sigma_s',\phi(s,t))\cup R(\Sigma_t',\phi(s,t))\cup C(s,t,\phi(s,t)),
\end{equation}
\noindent
where the function $0\leq\phi(s,t)\leq 1$ will be specified in the course of the proof.  
Roughly speaking, the set $\Gamma'_{s,t}$ consists of $\Sigma_s'$ and $\Sigma_t'$ retracted a bit and joined by the annular ``neck" $C(s,t,\phi(s,t))$.
We will open up the neck near the diagonal. In particular, we will chose $\phi(s,t)=1$ for $s=t$, i.e. $\Gamma'_{s,t}$ will be a trivial surface for $s=t$.

Let $\mu>0$ and set $\eps := \min(\mu/2,b_1(C\mu^2/8))$.   Let us define the $\eps$-diagonal
\begin{equation}
D_\eps :=\{(s,t)\in [-1,1]^2 \;:\; |t-s|\leq \eps\} .
\end{equation}

Let us set $\phi(s,t)$ to be the following on $D_\eps$:
\begin{equation*}
    \phi(s,t)|_{D_\eps} = \begin{cases}
 		 1-(s-t)/\eps              & t\leq s \\   
    
                  1-(t-s)/\eps               & s\leq t \\
              
           \end{cases}
\end{equation*}

The function $\phi(s,t)$ on $[-1,1]^2\setminus D_\eps$ can be any non-negative function always less than $b_2(C\eps^2/8)$ that vanishes on $\partial([-1,1]^2)\setminus D_\eps$.  Here, $\phi<b_2$ determines the thickness of the neck in $\Gamma'_{s,t}$ between $\Sigma_s$ and $\Sigma_t$ where the necks are very thin, so the precise form of the function is not important in that region. The most delicate region is near $(s,t)\approx (0,0)$, and in fact we will use the catenoid estimate in the region 
\begin{equation}
B_{\eps,\mu}:=\{(s,t)\in D_\eps: |s|\leq \mu\}.
\end{equation}

Arguing region by region, let us first estimate the area of $\Gamma'_{s,t}$ for $(s,t)\in [-1,1]^2\setminus D_\eps$.
For $(s,t)\in [-1,1]^2\setminus D_\eps$ it follows that either $|t|$ or $|s|$ is at least $\eps/2$ (since by definition on this set $|t-s|>\eps$).   Thus from \eqref{away2} we obtain
\begin{equation}\label{away}
|\Gamma_{s,t}^\vee| \leq 2|\Sigma_{0}| - C\eps^2/4.
\end{equation}

Since we choose $\phi(s,t) < b_2(C\eps^2/8)$ on $[-1,1]^2\setminus D_\eps$ we have 
\begin{equation}\label{collar}
|C(s,t,\phi(s,t))| \leq C\eps^2/8.
\end{equation}

Since $\Gamma'_{s,t}$ is obtained from $\Gamma_{s,t}^\vee$ by removing disks and adding the annular neck $C(s,t,\phi(s,t))$, combining \eqref{away} and \eqref{collar} we infer that
\begin{equation}\label{away}
|\Gamma'_{s,t}| \leq 2|\Sigma_{0}| - C\epsilon^2/8
\end{equation}
for $(s,t)\in [-1,1]^2\setminus D_\eps$.

Let us now estimate the areas of $\Gamma'_{s,t}$ for $(s,t)\in D_\eps\setminus B_{\eps,\mu}$. Since $|s|>\mu$ using the Taylor expansion \eqref{away2} we obtain
\begin{equation}\label{nohole}
|\Gamma_{s,t}^\vee| \leq 2|\Sigma_0| - C\mu^2/4.
\end{equation}
Using that $\eps\leq b_1(C\mu^2/8)$, and arguing as above, we infer that
\begin{equation}\label{defamount}
|\Gamma'_{s,t}| \leq 2|\Sigma_0| - C\mu^2/8
\end{equation}
for $(s,t)\in D_\epsilon\setminus B_{\epsilon,\mu}$.

So far we have established that $|\Gamma'_{s,t}| < 2|\Sigma_0|$ for $(s,t)\in [-1,1]^2\setminus B_{\eps,\mu}$.  It remains to use the catenoid estimate to adjust the family $\Gamma'_{s,t}$ in the region $B_{\eps,\mu}$ so that the areas of surfaces corresponding to this region in parameter-space are also below $2|\Sigma_0|$.  Note that everything above remains valid if we need to shrink $\eps$ further, which applying the catenoid estimate may require.

Let us further divide the region $B_{\eps,\mu}$ into its part ``above" the diagonal, and its part ``below."  Namely, set
\begin{equation}
B^+_{\eps,\mu}:=B_{\eps,\mu}\cap \{s\leq t\}
\end{equation}
and
\begin{equation}
B^-_{\eps,\mu}:=B_{\eps,\mu}\cap \{t\leq s\}.
\end{equation}

We will handle $B^-_{\eps,\mu}$ as the argument works mutatis mutandis for $B^+_{\eps,\mu}$.  

Set $r(x)=\mbox{dist}_{\Sigma_0}(x,\alpha(0))$.  Let us define the following logarithmic cutoff function:
\begin{equation*}
    \eta_\tau(x) = \begin{cases}
 		 1              & r(x)\geq \tau \\   
    
                  (1/\log(\tau))(\log \tau^2 - \log r(x))               & \tau^2\leq r(x)\leq \tau \\
                  0& r(x)\leq \tau^2
           \end{cases}
\end{equation*}
Let $\phi$ denote the normalized lowest eigenfunction of the stability operator so that $\Sigma_t=\exp_{p\in\Sigma_0}(t\phi(p) \nu(p))$ for $|t|<2\mu$, where we now choose the constant $\mu$ small enough.

Assume $s\in[-\mu,-\mu]$.  Let us denote the surface
\begin{equation}
\Lambda'_{s,\tau}:=\{\exp_p((s-\eps+\eps\eta_\tau(p))\phi(p)\nu(p))\;|\; p\in \Sigma_0\setminus B_{\tau^2}(\alpha(0))\}
\end{equation}
and the surface
\begin{equation}
\Lambda''_{s-\eps,\tau}:=\{\exp_p((s-\eps)\phi(p)\nu(p))\;|\; p\in \Sigma_0\setminus B_{\tau^2}(\alpha(0))\}.
\end{equation}
Finally for $s\in[-\mu,-\mu]$) denote the closed surface
\begin{equation}
\Lambda_{s,\tau}:=\Lambda'_{s,\tau}\cup \Lambda''_{s-\eps,\tau}.
\end{equation}
Loosely speaking, $\Lambda_{s,\tau}$ consists of $\Sigma_{s-\eps}$ together with $\Sigma_s$ with one disk removed from each and in their place a ``logarithmic" neck obtained from the cutoff function $\eta_\tau(x)$ that joins the two layers.

It follows from Theorem 2.4 in \cite{KMN} (the Catenoid Estimate) that if we shrink $\eps$ enough, there exists $\bar{\tau}>0$ so that for all $\tau\in [0,\bar{\tau}]$, and $s\in [-\mu,\mu]$ we can estimate the area of $\Lambda_{s,\tau}$ as:
\begin{equation}\label{cat}
|\Lambda_{s,\tau}|\leq 2|\Sigma_0|-C\eps^2.
\end{equation}
and
\begin{equation}\label{cat2}
|\Lambda_{s,\bar{\tau}}|\leq 2|\Sigma_0|-C\bar{\tau}^2.
\end{equation}
The only difference in establishing \eqref{cat} and \eqref{cat2} from the setting of Theorem 2.4 in \cite{KMN} is that here the components of $\Lambda_{s,\tau}$ are not symmetrically placed about the minimal surface $\Sigma_0$.  The point is that for $s\in[-\mu,\mu]$, the sum of the squared distance of $s-\eps$ from zero and the squared distance of $s$ from zero is at least $\eps^2/2$, and thus upon Taylor expanding the areas for $\Lambda_{s,\tau}$, one still obtains a negative $\eps^2$ term, which overpowers the $\eps^2/(-\log\eps)$ coming from ``opening the neck".   

Thus we can replace the sweep-out $\Gamma'_{s,t}$ in the region 
\begin{equation}
B_{\eps,\mu}^-\cap\{s-\eps\leq t\leq s-\eps/2\}
\end{equation}
by the surface $\Lambda_{s,(2\bar{\tau}/\eps)(t-(s-\eps))}$.

Using the globally defined retractions $R$, (c.f. lines 2.58-2.65 in \cite{KMN}) one can replace the sweep-out $\Gamma'_{s,t}$ in 
\begin{equation}
B_{\eps,\mu}^-\cap\{s-\eps/2\leq t\leq s\}
\end{equation}
while still maintaining the bound \eqref{cat} so that $\Gamma'_{s,s}$ is a trivial surface consisting of the point $\beta(s)$ for each $s\in [-\mu,\mu]$.  This completes the construction of the required amendments to $\Gamma'_{s,t}$ in the region $B^-_{\eps,\mu}$, and thus also in $B_{\eps,\mu}$.

Note that we have adjusted the sweep-out $\Gamma'_{s,t}$ in $B_{\eps,\mu}$ and thus the resulting sweep-out is not continuous over the two intervals $\partial B_{\eps,\mu}\cap\{s=\pm\mu\}$ (it is continuous over the other parts of $\partial B_{\eps,\mu}$ since we have chosen that the neck-size $\phi(s,t)=0$ on these intervals).
But because of \eqref{nohole}, by shrinking $\eps$ if necessary it is clear that we can make a transition region between the logarithmically cut-off surfaces in the amended sweep-outs and retracted surfaces while maintaining the area bound \eqref{totalarea}.  Indeed, by \eqref{defamount} area of $\Gamma'_{s,t}$ is less than $2|\Sigma_0|$ by a definite amount independent of $\eps$, and ``opening the neck" logarithmically, linearly using retractions $R$, or any interpolation between these two functions only adds area of order $\eps^2$.  
This completes the construction of the family $\Gamma_{s,t}$ satisfying the area bound \eqref{totalarea}.
\\
\\
It remains to show that we get a two-parameter sweep-out which detects $\alpha^2$.  Note that because we are using $\mathbb{Z}_2$ coefficients, the sweep-out $\Gamma_{s,t}$ is invariant under flipping through the diagonal:
\begin{equation}\label{identification}
\Gamma_{s,t}=\Gamma_{t,s}.
\end{equation}

Making the identifications of \eqref{identification}, we obtain a sweep-out parameterized by a triangle in $[-1,1]^2$, which after appropriate boundary identifications gives a sweep-out parameterized by $\mathbb{RP}^2$.  We will now show that this sweep-out $\{\Gamma_x\}_{x\in\mathbb{RP}^2}$ detects $\alpha^2$.   
To this end, let $\lambda$ represent a generator of $H^1(\mathbb{RP}^2,\mathbb{Z}_2)$ so that $\lambda^2\neq 0$.  As $\pi_1(\mathbb{RP}^2)=\mathbb{Z}_2$, consider the curve $\gamma$ in $\mathbb{RP}^2$ given by $s\rightarrow (s,-1)$ for $s\in[-1,1]$ which represents the only non-trivial element of the fundamental group of $\mathbb{RP}^2$.   The surfaces $\{\Gamma_{s,-1}\}_{s\in [-1,1]}$ corresponding to $\gamma$ give a $1$-sweep-out of $M$ as they are equal to the optimal foliation $\{\Sigma_s\}_{s\in [-1,1]}$.  Moreover, $\lambda(\gamma)=1$.  On other hand, any homotopically trivial closed curve $\gamma\subset\mathbb{RP}^2$ retracts to a point and thus $\lambda(\gamma)=0$.  
Thus by the criterion in Section \ref{sec_topo}, the sweep-out $\{\Gamma_x\}_{x\in\mathbb{RP}^2}$ detects $\alpha^2$.
\end{proof}

\section{Proof of Theorem \ref{main}}\label{sec_main}

In this section, we prove Theorem \ref{main}, which we restate.
\begin{theorem}[Existence of minimal two-spheres] 
Let $M$ be a three-manifold diffeomorphic to $\mathbb{S}^3$ and endowed with a bumpy metric.
Then $M$ contains at least $2$ embedded minimal two-spheres. More precisely, exactly one of the following alternatives holds:
\begin{enumerate}
\item $M$ contains at least $1$ stable embedded minimal two-sphere, and at least $2$ embedded minimal two-spheres of index one.
\item $M$ contains no stable embedded minimal two-sphere, at least $1$ embedded minimal two-sphere $\Sigma_1$ of index one, and at least $1$ embedded minimal two-sphere $\Sigma_2$ of index one or two.  In this case
\begin{equation}
|\Sigma_1|<|\Sigma_2|\ < 2|\Sigma_1|.
\end{equation}
\end{enumerate}
\end{theorem}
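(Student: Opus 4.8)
The plan is to branch on whether or not $M$ carries a stable embedded minimal two-sphere. The two resulting cases are precisely alternatives (1) and (2), and since they are mutually exclusive and exhaustive, it suffices to exhibit the asserted minimal two-spheres in each. \textbf{Case 1: $M$ contains a stable embedded minimal two-sphere $\Sigma$.} Since $M$ is diffeomorphic to $\mathbb{S}^3$, the sphere $\Sigma$ separates it into two closed three-discs $B_1$ and $B_2$. As the metric is bumpy, $\Sigma$ has no nontrivial Jacobi field and is therefore strictly stable, so a small inward push of $\Sigma$ along its first eigenfunction produces, inside each $B_i$, a three-disc $B_i'\subset B_i$ with mean convex boundary and with $\Sigma$ in its exterior (the relevant computation already appears in the proof of Theorem \ref{optimal}). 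I would then run the one-parameter min-max for manifolds with mean convex boundary (cf. Theorem 2.1 in \cite{MN2}) inside each $B_i'$: the lower index estimate of Ketover--Liokumovich \cite{KL} together with the upper index bound \eqref{indexbound} of \cite{MN} forces the output to be a single embedded minimal two-sphere with multiplicity one and index exactly one, and by the strong maximum principle against the mean convex barrier $\partial B_i'$ it lies in the interior of $B_i'$, hence is disjoint from $\Sigma$. This yields one stable and at least two index-one embedded minimal two-spheres, which is alternative (1).

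\textbf{Case 2: $M$ contains no stable embedded minimal two-sphere.} By Lemma \ref{lemma_width} there is an embedded minimal two-sphere $\Sigma_1$ of index one, with multiplicity one, such that $|\Sigma_1|=\gamma(M)=\omega_1(M)$. Theorem \ref{optimal} then supplies an optimal foliation $\{\Sigma_t\}_{t\in[-1,1]}$ whose maximal slice is $\Sigma_1$, and Theorem \ref{twoparam} produces a two-parameter sweep-out detecting $\alpha^2$ all of whose slices have area strictly below $2|\Sigma_1|$, so that
\begin{equation}
\omega_1(M)\le\omega_2(M)<2\,\omega_1(M).
\end{equation}
The first substantive step is to upgrade the left inequality to $\omega_1(M)<\omega_2(M)$. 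If equality held, the Lusternik--Schnirelmann theorem in the min-max setting (cf. \cite{MN}) would force the set of embedded minimal two-spheres of area $\omega_1(M)=\gamma(M)$ to be infinite; but by Lemma \ref{lemma_width} each such sphere has index one, so this family is compact by the usual area-and-index compactness for minimal surfaces, whereas bumpiness (no nontrivial Jacobi fields, hence every minimal two-sphere is nondegenerate and isolated) forces it to be finite --- a contradiction. Hence $\omega_1(M)<\omega_2(M)<2\omega_1(M)$.

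It remains to read off $\Sigma_2$ from the $i=2$ case of the Min-Max Theorem \ref{minmax}, which provides a stationary integral varifold $V_2=\sum_{j=1}^{k_2}m_2^j\Sigma_2^j$ with the $\Sigma_2^j$ pairwise disjoint embedded minimal two-spheres, $\omega_2(M)=\sum_j m_2^j|\Sigma_2^j|$, and $\sum_j\mathrm{index}(\Sigma_2^j)\le 2$. Since $M$ has no stable minimal two-sphere, every component is unstable, so $k_2\le 2$ and $|\Sigma_2^j|\ge\gamma(M)=|\Sigma_1|$ for each $j$; combining $\omega_2(M)\ge\big(\sum_j m_2^j\big)|\Sigma_1|$ with $\omega_2(M)<2|\Sigma_1|$ forces $\sum_j m_2^j=1$. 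Thus $V_2$ is a single embedded minimal two-sphere $\Sigma_2:=\Sigma_2^1$ occurring with multiplicity one, and of index one or two. Finally $|\Sigma_2|=\omega_2(M)$ satisfies $|\Sigma_1|=\omega_1(M)<\omega_2(M)=|\Sigma_2|<2\omega_1(M)=2|\Sigma_1|$, so in particular $\Sigma_2\neq\Sigma_1$, and this is precisely alternative (2).

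The step I expect to be the main obstacle is upgrading $\omega_1(M)\le\omega_2(M)$ to the strict inequality: the optimal foliation, the catenoid-improved two-parameter sweep-out with its strict bound $<2|\Sigma_1|$, and the regularity and index bounds of the Min-Max Theorem are all already available from the earlier sections, but ruling out $\omega_1(M)=\omega_2(M)$ genuinely requires the interplay between the Lusternik--Schnirelmann multiplicity phenomenon and the bumpiness hypothesis, and it is exactly this interplay --- rather than the area bookkeeping --- that makes the strict area inequalities in alternative (2) possible. A secondary point requiring care is verifying in Case 1 that the relative min-max minimal surfaces are honest two-spheres lying in the interiors of the $B_i'$ rather than (a stable sphere or) $\Sigma$ itself, which is where the lower index estimate of \cite{KL} is essential.
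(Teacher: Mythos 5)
Your Case 2 argument is essentially the paper's proof: Lemma \ref{lemma_width} gives $\Sigma_1$ of index one with $|\Sigma_1|=\omega_1(M)$; Theorems \ref{optimal} and \ref{twoparam} give $\omega_2(M)<2\omega_1(M)$; the possibility $\omega_1(M)=\omega_2(M)$ is excluded by the Lusternik--Schnirelmann statement (Theorem \ref{ls}, which the paper proves for completeness but which you may cite from \cite{MN}) combined with bumpiness, since infinitely many spheres of bounded area would subconverge and produce a Jacobi field; and then the $i=2$ case of Theorem \ref{minmax} with the index bound \eqref{indexbound} yields $\Sigma_2$. The only deviation is harmless and in fact slightly leaner: you obtain connectedness and multiplicity one of $V_2$ directly from the per-component bound $|\Sigma_2^j|\geq\gamma(M)=|\Sigma_1|$ together with $\omega_2(M)<2|\Sigma_1|$, whereas the paper first invokes Lemma \ref{mustintersect} to see that $V_2$ is connected and only then runs the area bookkeeping; both routes are valid.

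Case 1, however, contains a genuine sign error in the geometric setup. For a \emph{strictly stable} sphere $\Sigma$, the surfaces $\exp_{x\in\Sigma}(t\phi(x)\nu(x))$ obtained by pushing along the positive first eigenfunction have mean curvature vector pointing back \emph{toward} $\Sigma$ (the expansion \eqref{areas2} in the proof of Theorem \ref{optimal} is used there with the eigenvalue of the opposite sign, which is exactly what makes $D_1,D_2$ mean convex in the unstable case; for a stable $\Sigma$, think of the neck of a dumbbell, where the parallel spheres curve back toward the neck). Consequently the shrunken discs $B_i'\subset B_i$ you construct have mean \emph{concave} boundary; it is the collar around $\Sigma$, equivalently the slightly enlarged ball on the other side, that is mean convex, and in that enlarged ball the one-parameter min-max limit could a priori be the stable sphere $\Sigma$ itself (possibly with multiplicity), so the step ``by the strong maximum principle against the mean convex barrier $\partial B_i'$ the surface lies in the interior of $B_i'$, hence is disjoint from $\Sigma$'' does not exist as stated. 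The correct reduction is to run the one-parameter min-max in the balls $B_i$ themselves, with the minimal boundary $\Sigma$ serving as a (weak) barrier, and to use the lower index estimate of Ketover--Liokumovich to rule out that the limit is a multiple of the stable boundary; this is precisely how the paper handles Case 1, by quoting \cite{KL} for the statement that each ball contains an index-one minimal two-sphere in its interior. Your appeal to \cite{KL} identifies the right ingredient, but the mean-convex-disc wrapper you place around it would fail and should be removed or corrected.
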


\begin{proof}
Suppose first $M$ contains an embedded stable minimal two-sphere $\Sigma$.  Then by \cite{KL}, we obtain that each of the two balls in $M$ bounded by $\Sigma$ contains in its interior an index $1$ minimal sphere.   Thus in this case $M$ contains three minimal two-spheres, more precisely 1 stable and 2 index one spheres, and thus case (1) is satisfied.

Suppose now that $M$ contains no embedded stable minimal two-spheres.  Let us apply the Min-Max Theorem \ref{minmax} to the families detecting $\alpha$ and $\alpha^2$ to obtain stationary integral varifolds $V_1$ and $V_2$, which are collections of pairwise disjoint embedded minimal two-spheres, possibly with multiplicity.  Since by Lemma \ref{mustintersect} any two embedded minimal two-spheres intersect, it follows that $V_1$ and $V_2$ are in fact connected minimal two-spheres $\Sigma_1$, $\Sigma_2$ occurring with multiplicity $k_1$ and $k_2$, respectively.  

By Lemma \ref{lemma_width}  it follows that $k_1=1$ and thus that $|\Sigma_1|=\omega_1(M)$.  Moreover, $\Sigma_1$ is a smallest area minimal two-sphere in $M$, and its index is one.  

By Theorem \ref{twoparam}, we obtain
\begin{equation}\label{widthbound2}
k_2 |\Sigma_2|=\omega_2(M)  <  2\omega_1(M) = 2 |\Sigma_1|.
\end{equation}
Combining this with the property that $\Sigma_1$ is a smallest area minimal two-sphere, and thus in particular $|\Sigma_1|\leq |\Sigma_2|$, we infer that $k_2=1$.

Suppose first that $\omega_2(M) > \omega_1(M)$. This implies in particular that $\Sigma_1\neq \Sigma_2$. By \eqref{indexbound} it follows that the index of $\Sigma_2$ is at most $2$.  This completes the proof in the case that $\omega_2(M)>\omega_1(M)$.




Suppose instead that $\omega_2(M) =\omega_1(M)$.  By Lusternik-Schnirelmann theory (see Theorem \ref{ls} below) $M$ contains infinitely many embedded minimal two-spheres of area $\omega_1(M)$.  But this infinite family of two-spheres, as they have uniformly bounded areas, must have a convergent subsequence.  This results in a Jacobi field, violating the bumpiness of the metric.  It follows that this case cannot occur.  
 
Thus case (2) is established when $M$ contains no stable embedded minimal two-spheres. This completes the proof. 
\end{proof}

It remains to prove that (in the notation of the above proof)

\begin{theorem}\label{ls}
If $\omega_2(M)=\omega_1(M)$, then $M$ contains infinitely many embedded minimal two-spheres of area $\omega_1(M)$.
\end{theorem}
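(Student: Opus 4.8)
The plan is to run a Lusternik–Schnirelmann argument in the space $\mathcal{S}$ of embedded two-spheres, using the cohomology ring $H^\ast(\mathcal{S},\mathbb{Z}_2)=\mathbb{Z}_2[\alpha]/(\alpha^5)$ established in Section \ref{sec_topo}. Set $W:=\omega_1(M)=\omega_2(M)$ and let $\mathcal{C}_W$ denote the set of embedded minimal two-spheres in $M$ of area exactly $W$, viewed as stationary varifolds realizing the width. The first step is to observe that since $M$ has no stable embedded minimal two-spheres, Lemma \ref{mustintersect} forces any two elements of $\mathcal{S}_{\min}$ to intersect; hence (as in the proof of Theorem \ref{main}) each min-max varifold $V_i$ detecting $\alpha^i$ is a \emph{connected} minimal two-sphere with some integer multiplicity, and by Lemma \ref{lemma_width} the one detecting $\alpha$ has multiplicity one and area $W=\gamma(M)$. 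The goal is to show $\mathcal{C}_W$ is infinite, and the obstruction to argue by contradiction: assume $\mathcal{C}_W=\{\Sigma^{(1)},\dots,\Sigma^{(N)}\}$ is finite.

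The core of the argument is the standard LS deformation lemma adapted to the Simon–Smith min-max setting (as in Marques–Neves \cite{MN}, Section 4, or \cite{MN2}): if $\omega_1(M)=\omega_2(M)=W$ and $\mathcal{C}_W$ is finite, then one produces, for the $\alpha^2$-detecting families, a minimizing sequence of sweep-outs $\Phi_k\in S_2$ whose max-slices concentrate near $\mathcal{C}_W$; one then chooses pairwise disjoint neighborhoods $U_1,\dots,U_N$ of the $\Sigma^{(j)}$ in $\mathcal{S}$ and, using that on the complement of $\bigcup_j U_j$ the area functional has no critical points at level $W$, performs a deformation (negative gradient flow of area, in the Simon–Smith regularity class — this is the step where one must invoke the interpolation and pull-tight machinery) pushing the $\alpha^2$ family below $W$ outside the $U_j$. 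Because $\Phi_k$ detects $\alpha^2$ and $\alpha$ restricts nontrivially to each sufficiently small neighborhood of a point of $\mathcal{C}_W$ only in a one-parameter way, a cup-product/cup-length count shows that the part of the family landing in a \emph{single} $U_j$ can detect at most $\alpha^1$, not $\alpha^2$; hence if all of $\mathcal{C}_W$ consisted of finitely many spheres the $\alpha^2$ class could be pushed strictly below $W$, contradicting $\omega_2(M)=W$. This is essentially the mechanism by which LS-theory converts ``cup-length $\geq 2$ but the first two widths coincide'' into ``infinitely many critical points at the common level.''

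Concretely I would structure it as: (i) reduce to connected multiplicity-one min-max spheres via Lemma \ref{mustintersect} and Lemma \ref{lemma_width}; (ii) assume $\mathcal{C}_W$ finite and fix disjoint open sets $U_j\subset\mathcal{S}$ around its elements; (iii) quote/adapt the Simon–Smith deformation theorem to flow any sweep-out with max-area near $W$ downward in area away from $\bigcup_j U_j$, keeping it a valid sweep-out of two-spheres; (iv) apply the cup-length estimate of LS-theory: the restriction $\alpha|_{U_j}$ satisfies $(\alpha|_{U_j})^2=0$ since $U_j$ deformation-retracts onto a small ball in $\mathcal{S}$ around a single embedded sphere (one needs here that near a nondegenerate — bumpiness is not yet assumed in this theorem, but we only need finiteness, which we have assumed — minimal sphere the space of competitors looks locally like a disc in the normal eigenfunction direction, so the relevant local cohomology has trivial cup-square); (v) conclude that an $\alpha^2$-detecting family cannot have all its high-area slices trapped in $\bigcup_j U_j$, so the deformation lowers its max-area below $W$, giving $\omega_2(M)<W$, a contradiction.

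The main obstacle I expect is step (iii)–(iv): making the deformation argument rigorous in the Simon–Smith category (fixed topology, Hausdorff-continuous sweep-outs of two-spheres rather than general varifold sweep-outs) requires the interpolation results that let one go back and forth between the discrete/varifold picture where the Almgren–Pitts deformation theory applies and the smooth two-sphere picture — precisely the technical heart of \cite{SS} and its extensions. Once that is in place, the cup-length bookkeeping (ensuring a neighborhood of a single minimal sphere can only ``carry'' the class $\alpha$, not $\alpha^2$) is routine algebraic topology: it comes down to the fact that near an isolated minimal two-sphere the parameter space of nearby sweep-out slices is contractible after collapsing, so its reduced cohomology vanishes in degree $\geq 1$ — hence no $\alpha^2$. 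I would also remark that the conclusion of Theorem \ref{ls} is exactly what is needed in the proof of Theorem \ref{main}: the resulting infinite family of area-$W$ minimal two-spheres has uniformly bounded area, subconverges smoothly (multiplicity one, by the area-infimum argument of Lemma \ref{lemma_width}) to a minimal two-sphere supporting a nontrivial Jacobi field, contradicting bumpiness.
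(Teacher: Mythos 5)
Your overall route is the same as the paper's: assume there are only finitely many embedded minimal two-spheres of area $W=\omega_1(M)=\omega_2(M)$, take a minimizing sequence of $\alpha^2$-detecting sweep-outs, split the parameter space into the part $A$ mapping $\mathcal{F}$-close to the finite critical set and the part $B$ staying a definite distance away, use that $\alpha|_A=0$ together with the Lusternik--Schnirelmann vanishing lemma (Lemma \ref{vanishing}) to force $\alpha|_B\neq 0$, and then deform (pull tight) away from the critical set. Two repairs are needed, one cosmetic and one genuine. Cosmetically: what the vanishing lemma requires is that the degree-one class $\alpha$ itself pulls back trivially from a small $\mathcal{F}$-neighborhood of finitely many spheres (as your parenthetical about the reduced cohomology vanishing in degrees $\geq 1$ correctly indicates, and as the paper quotes from \cite{MN2}); the weaker statement $(\alpha|_{U_j})^2=0$ is not enough to run Lemma \ref{vanishing} with the two classes $\alpha,\alpha$. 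Also, your step (i) (no stable spheres, Lemma \ref{mustintersect}, multiplicity one) is neither a hypothesis of Theorem \ref{ls} nor used in the argument.

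The genuine issue is the final inference. You deform the family below $W$ only \emph{outside} $\bigcup_j U_j$ and then conclude that ``the deformation lowers its max-area below $W$, giving $\omega_2(M)<W$.'' This does not follow: the slices parametrized by $A$ are $\mathcal{F}$-close to minimal spheres of area exactly $W$, so after your deformation the global maximum of area is still essentially $W$, and no contradiction with $\omega_2(M)=W$ results. The contradiction must instead be run at the level of $\omega_1$, on the restricted family: since $\alpha|_B\neq 0$, the family $\Phi_i|_{B}$ is a nontrivial one-parameter sweep-out, hence $\sup_{B}|\Phi_i|\geq \omega_1(M)=W$; on the other hand, since $B$ stays a fixed $\mathcal{F}$-distance from every minimal sphere of area $W$ and $\omega_2(M)=\omega_1(M)$ forces the min-max level over $B$ to equal $W$, the restricted families admit no almost-minimizing subsequence, so the Almgren--Pitts pull-tight argument yields homotopic families on $B$ with all areas strictly below $W$, contradicting $\sup_B\geq\omega_1(M)$. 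With the conclusion rerouted in this way, your proposal coincides with the paper's proof.
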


Theorem \ref{ls} is well-known to experts (c.f. Theorem 6.1 in \cite{MN}) but we include a brief proof for the reader's convenience. The following vanishing lemma is the essential topological ingredient needed in the proof (for a short proof of the lemma, see the Appendix of \cite{G}):

\begin{lemma}[{Lusternik-Schnirelmann vanishing lemma}]\label{vanishing}
Let $X$ be a CW complex and $A$ and $B$ be open subsets of $X$.  Let $\alpha,\beta\in H^*(X,R)$ be cohomology classes, where $R$ is any ring.  If $\alpha |_{A}=0$ and $\beta |_{B}=0$, then $\alpha\cup\beta$ vanishes on $A\cup B$.
\end{lemma}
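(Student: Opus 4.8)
The plan is to deduce the lemma from naturality of the cup product together with the long exact cohomology sequences of the pairs $(X,A)$, $(X,B)$, and $(X,A\cup B)$, the one substantive input being the existence of the \emph{relative} cup product for the triad $(X;A,B)$ --- which is precisely where the openness of $A$ and $B$ enters.

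First, I would lift $\alpha$ and $\beta$ to relative classes. The hypothesis $\alpha|_A=0$ says that $\alpha$ lies in the kernel of the restriction $i_A^*\colon H^*(X;R)\to H^*(A;R)$, so by exactness of the long exact sequence of the pair $(X,A)$ there is a class $\tilde\alpha\in H^*(X,A;R)$ with $j_A^*(\tilde\alpha)=\alpha$, where $j_A^*$ is induced by the inclusion of pairs $(X,\emptyset)\hookrightarrow(X,A)$. Likewise, choose $\tilde\beta\in H^*(X,B;R)$ with $j_B^*(\tilde\beta)=\beta$.

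Second, I would invoke the relative cup product. Since $A$ and $B$ are open, they cover $A\cup B$ by open sets, so the subcomplex $C_\ast(A)+C_\ast(B)$ of ``small'' singular chains computes $H_\ast(A\cup B)$ (barycentric subdivision, Lebesgue number); this is exactly what makes the relative cup product
\[ H^p(X,A;R)\otimes H^q(X,B;R)\longrightarrow H^{p+q}(X,A\cup B;R) \]
well defined, and it is compatible with the absolute product in the sense that $j_{A\cup B}^*(\tilde\alpha\smile\tilde\beta)=j_A^*(\tilde\alpha)\cup j_B^*(\tilde\beta)=\alpha\cup\beta$, where $j_{A\cup B}^*\colon H^*(X,A\cup B;R)\to H^*(X;R)$ is induced by $(X,\emptyset)\hookrightarrow(X,A\cup B)$. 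Equivalently, one can argue entirely with cochains: choose cocycles $a$ representing $\alpha$ and $b$ representing $\beta$ that vanish as cochains on $A$ and on $B$ respectively --- possible after subtracting coboundaries, since the restriction of singular cochains along $A\hookrightarrow X$ is surjective --- and then $a\cup b$ vanishes on every singular simplex whose image lies in $A$ or in $B$, hence represents $0$ in the cohomology of the subcomplex of $\{A,B\}$-small simplices of $A\cup B$, i.e.\ in $H^*(A\cup B;R)$.

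Finally, the conclusion drops out of one more exact sequence: in the long exact sequence of the pair $(X,A\cup B)$ the composition $H^*(X,A\cup B;R)\xrightarrow{\,j_{A\cup B}^*\,}H^*(X;R)\xrightarrow{\,i_{A\cup B}^*\,}H^*(A\cup B;R)$ is zero, and since $\alpha\cup\beta=j_{A\cup B}^*(\tilde\alpha\smile\tilde\beta)$ lies in the image of $j_{A\cup B}^*$, we get $i_{A\cup B}^*(\alpha\cup\beta)=(\alpha\cup\beta)|_{A\cup B}=0$, which is the assertion. The main obstacle here is not the diagram chase, which is routine, but having the relative cup product and its compatibility with restriction available; in a self-contained write-up one either cites a standard reference (e.g.\ Hatcher, \S 3.2, or Spanier) or spends a short paragraph on the small-simplices argument above --- and that step is exactly the one that fails without the openness (or, in the CW setting, subcomplex) hypothesis on $A$ and $B$.
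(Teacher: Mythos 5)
Your proof is correct. Note that the paper does not actually prove Lemma \ref{vanishing}; it only cites the Appendix of \cite{G} for a short proof, and your argument --- lifting $\alpha$ and $\beta$ to relative classes via the long exact sequences of $(X,A)$ and $(X,B)$, multiplying with the relative cup product $H^p(X,A;R)\otimes H^q(X,B;R)\to H^{p+q}(X,A\cup B;R)$ (or, at cochain level, choosing representatives vanishing on simplices in $A$, resp.\ $B$, and invoking the small-simplices theorem for the open cover $\{A,B\}$ of $A\cup B$), then restricting along $(X,\emptyset)\hookrightarrow(X,A\cup B)$ --- is precisely the standard argument that such a reference supplies, with the role of the openness hypothesis correctly identified.
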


\begin{proof}[{Proof of Theorem \ref{ls}}]
Suppose $M$ contains only finitely many minimal embedded two spheres of area $\omega_1(M)$.   Denote by $\mathcal{S}_{\min}$ the space of such two-spheres.

Consider a minimizing sequence of sweep-outs $\Phi_i$ defined on $X_i$ the area of whose maximal slice is approaching $\omega_2(M)=\omega_1(M)$.    Thus we have
\begin{equation}\label{minimizing}
\sup_{x\in X_i} |\Phi_i(x)|\leq \omega_1(M)+\epsilon_i,
\end{equation}
where $\epsilon_i\rightarrow 0$.

Let $\eps>0$. For each $i$, consider the following subsets of $X_i$:
\begin{equation}
A_i:= \{x\in X_i \;|\; \mathcal{F}(\Phi_i(x),\mathcal{S}_{\min})<\eps\}
\end{equation}
and
\begin{equation}
B_i:= \{x\in X_i \;|\; \mathcal{F}(\Phi_i(x),\mathcal{S}_{\min})>\eps/2\}.
\end{equation}

Since by assumption $A_i$ is a neighborhood in the $\mathcal{F}$-metric of finitely many minimal surfaces, it follows that $\alpha |_{A_i}=0$ provided $\eps$ is chosen small enough, c.f. \cite{MN2}.

If $\alpha |_{B_i}=0$, then since $X_i=A_i\cup B_i$, it follows from Lemma \ref{vanishing} that $\alpha\cup\alpha |_{X_i}$ also vanishes.  This contradicts the fact that the families $\Phi_i$ detect $\alpha\cup\alpha$ by assumption.
Thus $\alpha |_{B_i}$ does not vanish, and thus the families $\Phi_i |_{B_i}$ give non-trivial sweep-outs of $M$.   

Consider now the min-max width
\begin{equation}
W=\inf_i \sup_{x\in B_i} |\Phi_i(x)|.
\end{equation}

On the one hand, from \eqref{minimizing} we have
\begin{equation} \label{we}
W\leq\omega_1(M). 
\end{equation}

On the other hand, since $\Phi_i |_{B_i}$ detects $\alpha$, we obtain
\begin{equation}
\omega_1(M)\leq W,
\end{equation}
and thus
\begin{equation}\label{isequal}
W=\omega_1(M).
\end{equation}

However, since the families $\Phi_i |_{B_i}$ are by definition of $B_i$ a definite distance away from the space of embedded minimal two-spheres of area $\omega_1(M)$, and $\omega_2(M)=\omega_1(M)$, it follows that the family  $\Phi_i |_{B_i}$ contains no almost minimizing subsequence.   Thus the ``pull-tight" argument of Almgren-Pitts allows one to construct a homotopic family $\Phi'_i |_{B_i}$ with all areas strictly below $\omega_1(M)$.  This contradicts \eqref{isequal}, and finishes the proof.
\end{proof}

\section{Smooth mean convex foliations in Euclidean space}\label{sec_fol_eucl}
 
The goal of this section is to prove:

\begin{theorem}\label{thm_foliation_r3}
Let $K\subset \mathbb{R}^3$ be a smooth 3-disc with mean convex boundary. Then there exists a smooth foliation $\{ \Sigma_t \}_{t\in [0,1]}$ of $K$ by mean convex embedded 2-spheres.
\end{theorem}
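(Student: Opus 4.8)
The plan is to run mean curvature flow with surgery starting from $\partial K$ and use it to construct the foliation. Since $\partial K$ is mean convex and $K\subset\mathbb{R}^3$ is a three-disc, the flow is a mean convex (indeed two-convex, since we are dealing with surfaces in $\mathbb{R}^3$) mean curvature flow, which is exactly the setting of Huisken-Sinclair / Haslhofer-Kleiner \cite{HK}. The key structural facts I would invoke are: (i) the surgically modified flow starting from $\partial K$ exists for all time and becomes extinct in finite time, since in $\mathbb{R}^3$ a compact mean convex hypersurface sweeps out a bounded region and by the avoidance principle comparison with shrinking spheres the level set flow (hence the surgery flow) is extinct in finite time; (ii) by the canonical neighborhood theorem, near every surgery time the relevant region of the flow is either a standard neck or a convex cap, so the topology that is being discarded by surgery is always a union of three-balls capped off; and (iii) since $K$ is a disc, all intermediate hypersurfaces remain embedded two-spheres (no genus or extra components can be created: the initial surface is connected of genus zero, necks only pinch it into spheres, and the region being foliated is simply connected so nothing exotic survives).

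The core of the argument is to turn the \emph{discrete} family produced by the surgery flow into a \emph{smooth foliation}. Here I would follow the method of Buzano-Haslhofer-Hershkovits \cite{BHH}: between consecutive surgery times the smooth mean curvature flow $\{M_t\}$ itself gives a genuine foliation of the swept region, because mean convexity forces $M_t$ and $M_{t'}$ to be disjoint for $t\neq t'$ (strong maximum principle / avoidance) and $H>0$ means the normal speed is nowhere zero, so $t\mapsto M_t$ is a smooth foliation of the spacetime-swept domain. At each surgery time one has to interpolate across the surgery: one replaces a neck by two caps, which changes the surface by a controlled, localized modification inside a small ball. I would insert a short \emph{auxiliary smooth foliation} of each surgically-excised region — for a neck being cut, a standard model foliation interpolating between the pre-surgery neck and the post-surgery capped-off pieces, built from the rotationally symmetric surgery model; for the small convex components that become extinct, the mean curvature flow of that component itself (it is convex, so by Huisken it shrinks to a round point, giving a foliation of a small ball by convex spheres). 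Concatenating all these pieces — the smooth-flow foliations on the intervals between surgeries, and the model foliations across surgeries — and reparametrizing time to $[0,1]$ yields a continuous family of embedded mean convex two-spheres; a further smoothing of the concatenation (as in the proof of Theorem \ref{optimal}) makes it a genuine smooth foliation of $K$.

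There is one point requiring care: one must check that the surgically-discarded pieces, together with the model foliations used to fill them, actually reassemble to foliate \emph{all} of $K$ and not just a subregion. This follows because the surgery flow, together with the regions removed at surgery times, exhausts the initial domain $K$ (this is part of the structure of the construction — nothing is lost except finitely many capped three-balls, each of which we foliate explicitly), and because the foliation leaves fit together continuously across surgery times by construction of the model necks and caps. The reason mean convexity of all leaves is preserved throughout: the smooth-flow leaves are mean convex because mean convexity is preserved by mean curvature flow (Huisken); the surgery caps are constructed to be mean convex by design \cite{HK}; and the convex-component foliations are mean convex since convex spheres shrinking under MCF stay convex, hence have $H>0$.

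\textbf{Main obstacle.} The genuinely delicate step is the interpolation across surgery times: one must produce, in a $C^\infty$ (or at least continuous-in-$t$, smooth-in-$x$) way, model foliations of the neck/cap surgery regions that match the flow foliations on both sides and consist of embedded mean convex spheres. This requires a careful analysis of the standard surgery model (the rotationally symmetric neck pinch model) to exhibit it as sitting inside a mean convex foliation, and then gluing this model into the ambient flow while preserving embeddedness and mean convexity — essentially importing the quantitative control from the canonical neighborhood theorem of \cite{HK}. The rest (finite-time extinction, topological control, disjointness of smooth-flow leaves) is comparatively routine given the cited results.
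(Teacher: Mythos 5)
There is a genuine gap, and it sits exactly at the point your proposal treats as bookkeeping: the "concatenation" of the between-surgery flow foliations with separate model foliations of the excised necks and discarded components cannot produce a foliation of $K$ by embedded two-spheres. A foliation of the $3$-disc $K$ by spheres is the same as a nested, strictly monotone family of closed balls $K_t$ with $\Sigma_t=\partial K_t$; in particular every leaf must be a \emph{single connected} sphere and every region $K_t$ a ball. But genuine surgery disconnects the domain: after a neck is replaced by two caps (think of a dumbbell developing a neck pinch), the evolving domain is $A\sqcup B$, whose boundary is two spheres, and no time-slice of your concatenated family can serve as a leaf. Nor can you foliate $A$ and $B$ "sequentially": the leaves sweeping out $B$ would have to be nested with the leaves that already swept out $A$, which forces every intermediate leaf to keep a thin connection into the not-yet-swept lobe. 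The same problem occurs for each discarded component: foliating it by its own shrinking convex spheres produces leaves that are disjoint from, but not nested with, the leaves of the main flow, so they cannot be inserted into a single monotone family. This is not a technicality about smoothing the interpolation across surgery times; it is a topological obstruction to the architecture of the argument.

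The paper's proof is built precisely to avoid this. Instead of the disconnecting surgery of \cite{HK} alone, it uses the variant of Buzano--Haslhofer--Hershkovits \cite{BHH} in which surgery necks are deformed to tiny \emph{strings} (the gluing map $\mathcal{G}_{r_s}$) rather than removed, so that the evolving domain stays a connected ball at all times and eventually becomes a marble tree, which is then shrunk to its skeleton (this is also why the final leaf $\Sigma_1$ is a union of arcs, not a point). The new work in the paper is then to upgrade the \cite{BHH} isotopies, which are monotone only outside the surgery balls, to \emph{strictly monotone} mean convex isotopies everywhere (Proposition \ref{prop_replacement2}, Corollary \ref{prop_replacement1}, and the backwards induction over surgery times), since monotonicity is what turns an isotopy into a foliation. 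Your proposal correctly identifies the ingredients from \cite{HK} (existence, canonical neighborhoods, preservation of mean convexity, finite-time extinction via avoidance) and correctly flags that the interpolation across surgeries is delicate, but without the string/marble-tree mechanism (or some other device keeping every leaf connected and the family globally nested) the plan fails at the first surgery time.
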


To prove Theorem  \ref{thm_foliation_r3} we will follow the approach from Buzano-Haslhofer-Hershkovits \cite{BHH}. In that prior work,
using a variant of mean curvature flow with surgery where neck regions are deformed to tiny strings instead of being cut out completely, it has been shown in particular that there exists a mean convex isotopy $\{K_t\}_{t\in [0,T_1]}$ that deforms $K$ into a so-called marble-tree (see \cite[Def. 5.1]{BHH}). Unfortunately, in most situations the isotopy constructed in \cite{BHH} is not monotone. We will now modify the approach to construct a mean convex isotopy $\{K'_t\}_{t\in [0,T_1]}$ that deforms $K$ into a marble-tree and has the additional good property that it is strictly monotone, i.e.
\begin{equation}
t_2 > t_1\quad \Rightarrow \quad K'_{t_2}\subset \textrm{Int}(K'_{t_1}).
\end{equation}
Once this is achieved, the family $\Sigma_t=\partial K'_t$ (concatenated with a final bit which shrinks the marble-tree to its skeleton) will give the desired smooth mean convex foliation of $K$.

\begin{proof}[Proof of Theorem \ref{thm_foliation_r3}]
Given the initial domain $K\subset \mathbb{R}^3$ we choose all the parameters for the following argument suitably as in \cite[p. 25]{BHH}.

Consider the evolution $\{K_t\}$ by mean curvature flow with surgery given by \cite[Thm. 1.21]{HK} with initial condition $K$.
Let $0 < t_1 < \ldots < t_\ell$ be the times where there is some surgery and/or discarding (see \cite[Def. 1.17]{HK}). By the definition of a flow with surgery at each $t_i$ there are finitely many (possibly zero) $\delta$-necks with center $p_i^j$ and radius $r_{\textrm{neck}} = H_{\textrm{neck}}^{-1}$ that are replaced by a pair of standard caps \cite[Def. 2.4]{HK}. Similarly, for each discarded component $C_i^j$ (see \cite[Def. 1.17]{HK}) \cite[Prop. 7.4]{BHH} gives a finite collection of $\varepsilon$-neck points  whose centers and radii are denoted by $p_i^{jk}$ and $r_i^{jk}$. Write $B_i^j:=B_{10\Gamma_{\textrm{cap}} r_{\textrm{neck}}}(p_i^j)$ and $B_i^{jk}:=B_{10\Gamma_{\textrm{cap}} r_i^{jk}}(p_i^{jk})$, where $\Gamma_{\textrm{cap}}$ is the cap separation parameter for the surgeries (see \cite[Def. 2.4]{HK}). These balls are disjoint and the isotopy $\{K_t\}_{t\in [0,T_1]}$ constructed in \cite[Sec. 9]{BHH} is monotone outside of
\begin{equation}\label{def_of_x}
X:=\bigcup_{i,j} B_i^j \cup \bigcup_{i,j,k}B_i^{jk},
\end{equation}
i.e. we have that
\begin{equation}
t_2 \geq t_1\quad \Rightarrow \quad K_{t_2}\setminus X \subseteq K_{t_1}\setminus X.
\end{equation}
We will now modify the construction to make the isotopy monotone also inside $X$, and in fact strictly monotone everywhere.

\begin{proposition}[{Replacement for \cite[Prop. 7.4]{BHH}}]\label{prop_replacement2}
For $\varepsilon$ small enough, every capped $\varepsilon$-tube (see \cite[Def. 7.3]{BHH}) is mean convex isotopic to a marble tree, and the isotopy can be chosen strictly monotone.
\end{proposition}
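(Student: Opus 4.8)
\textbf{Proof plan for Proposition \ref{prop_replacement2}.}

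The plan is to revisit the construction of \cite[Prop. 7.4]{BHH}, which takes a capped $\varepsilon$-tube and produces a mean convex isotopy to a marble tree, and to upgrade it so that the isotopy is strictly monotone (i.e.\ the enclosed domains are strictly nested). Recall that a capped $\varepsilon$-tube is a long thin region that is $\varepsilon$-close to a round cylinder away from one or two spherical caps; topologically it is a ball. The original argument in \cite{BHH} works by a sequence of local moves: contracting the tube along its axis, absorbing caps, and eventually sliding pieces into a configuration of tiny round marbles joined by thin strings. Each individual move is mean convex, but the concatenation need only be \emph{weakly} monotone, and worse, some of the moves (e.g.\ the ones happening near the standard caps produced by surgery, or near the $\varepsilon$-neck points of discarded components) actually push the domain outward locally inside the balls $B_i^j$ and $B_i^{jk}$. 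The fix will be purely local inside each such ball.

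First I would set up the normal form: after a mean convex isotopy supported in a collar of each surgery/discard region, arrange that inside each ball $B_i^j$ (respectively $B_i^{jk}$) the surface coincides with a standard rotationally symmetric cap attached to an exactly round cylindrical neck of radius $r_{\mathrm{neck}}$ (respectively $r_i^{jk}$); this is already essentially done in \cite{BHH} and uses only the $\varepsilon$-closeness estimates together with the interpolation lemmas for mean convex hypersurfaces. Second, I would run the axial contraction of the cylindrical part: shrink the neck radius monotonically while translating the cap inward along the axis. Because the profile stays rotationally symmetric and convex in the normal direction, mean convexity is preserved by the standard computation for surfaces of revolution, and — crucially — this move can be made strictly monotone by parametrizing it so that the profile function is strictly decreasing in the time parameter at every point (here one uses that the cap is strictly convex so that the ``recession'' of the cap strictly shrinks the domain pointwise). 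Third, concatenate over all the balls in $X$ and interpolate with the already-monotone isotopy of \cite[Sec. 9]{BHH} outside $X$; since the two agree on the overlap annuli $\partial B_i^j \setminus (\text{neck})$ up to the usual $\varepsilon$-error, a final strict perturbation (shrinking by $e^{-\sigma t}$ with $\sigma$ small, say, or pushing inward along the outward normal by a strictly increasing amount) makes the whole thing strictly monotone while keeping mean convexity, by the maximum-principle-type stability of mean convexity under $C^2$-small inward perturbations. Fourth, once the tube has been strictly monotonically contracted into a short capped region, the remaining steps of \cite[Prop. 7.4]{BHH} deforming it to a marble tree are already (or can be trivially made) strictly monotone by the same ``multiply by a strictly decreasing factor'' trick, since a marble tree is a union of tiny convex marbles.

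The main obstacle I anticipate is the interface between ``inside $X$'' and ``outside $X$'': the isotopy of \cite{BHH} is only weakly monotone outside $X$ and the local modification inside $X$ must be glued to it without destroying mean convexity, which is delicate because mean convexity is not an open condition in a naive $C^0$ sense. The way around this is to first make the \emph{outside} isotopy strictly monotone as well — this is automatic from the structure of mean curvature flow with surgery, since between surgeries the flow moves every point strictly inward (the mean curvature is strictly positive on a compact mean convex surface that is not yet extinct), and the surgery replacements in \cite[Def. 2.4]{HK} can be arranged to be strictly inward in the neck region — so that strictness holds on both sides and survives the interpolation. A secondary technical point is checking that the rotationally symmetric contraction inside each $B_i^j$ can be made to start and end matching the prescribed boundary data on $\partial B_i^j$; this is handled exactly as the interpolation estimates in \cite[Sec. 3]{BHH}, using that the cap separation parameter $\Gamma_{\mathrm{cap}}$ is large so there is ample room to interpolate. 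With these in place, the proposition follows, and feeding it back in place of \cite[Prop. 7.4]{BHH} yields the strictly monotone mean convex isotopy $\{K'_t\}$, hence the desired smooth mean convex foliation in Theorem \ref{thm_foliation_r3}.
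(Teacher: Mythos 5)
Your core move --- sliding the cap inward along the axis while shrinking the neck radius, with mean convexity ``preserved by the standard computation for surfaces of revolution'' --- is where the gap is. A capped $\varepsilon$-tube is not a surface of revolution: its spine is a curve in space, the neck scale $H^{-1}$ varies along it (possibly by a large factor over its length), and the surface is only $\varepsilon$-close to round cylinders locally; the rotationally symmetric normal form you arrange lives only inside small balls around selected neck points, whereas your axial contraction must drag the cap through the long non-symmetric stretches in between. Making ``cap sliding along an $\varepsilon$-tube'' rigorous requires precisely the graphical interpolation estimates you do not supply, and your back-up device of scaling by $e^{-\sigma t}$ is not even weakly monotone unless the domain is star-shaped, which a long bent tube need not be. The paper avoids this difficulty entirely: it never moves a cap along the tube. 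Instead it fixes a maximal, widely separated collection of $\varepsilon$-neck points, performs surgery there, joins the opposing standard caps by thin strings via the gluing map $\mathcal{G}_{r_s}$ of \cite{BHH}, and shows in four local steps (graphical projection onto evolving standard caps, the interpolation \cite[Prop. 3.12]{BHH}, the explicit rotationally symmetric neck model, and a final graphical interpolation with the string radius shrinking from $2r_s$ to $r_s$) that the tube is strictly monotonically isotopic to this glued configuration; each remaining bounded piece is convex or a short capped tube and is contracted to a marble in place, the strings following by normal motion with slowly decreasing radius. Strictness is gained cheaply throughout by letting the configuration evolve by mean curvature for a short time and by shrinking radii, which is close in spirit to your correct remark that the smooth flow moves strictly inward.

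Two further points. First, the balls $B_i^j$, $B_i^{jk}$, the set $X$, and the ``outside-$X$'' isotopy of \cite[Sec. 9]{BHH} belong to the proof of Theorem \ref{thm_foliation_r3}, not to Proposition \ref{prop_replacement2}, which concerns a single capped $\varepsilon$-tube (e.g. a discarded component); there is no exterior isotopy to glue to here, so the interface problem you single out as the main obstacle is not the issue for this statement. Second, even if your surgery-free contraction were completed (the paper's footnote acknowledges such a proof is possible), it would end in a single marble and would not provide the isotopies through surgery necks that Corollary \ref{prop_replacement1} extracts from the paper's proof and that the induction in Section \ref{sec_fol_eucl} actually uses; so in context the surgery-plus-strings structure of the argument is not optional.
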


\begin{proof}[{Proof of Proposition \ref{prop_replacement2}}]
Let $(K,\gamma)$ be an $\varepsilon$-neck, with $\varepsilon$ small enough so that the following argument works.

Our argument starts similarly as in the proof of \cite[Prop. 7.4]{BHH}. Namely, let $p_\pm\in\gamma$ be $\varepsilon$-neck points that are as close as possible to the endpoints $\bar{p}_\pm$, respectively. Let $\mathcal{I}\subset \gamma$ be a maximal collection of $\varepsilon$-neck points with $p_\pm\in \mathcal{I}$ such that for any pair $p,q\in \mathcal{I}$ the separation between the points is at least $100 \max\{\varepsilon^{-1}, \Gamma_{\textrm{cap}}\} \max\{H(p)^{-1}, H(q)^{-1}\}$.
For each $p\in\mathcal{I}$ we replace the $\varepsilon$-neck with center $p$ by a suitable pair of opposing standard caps as in \cite[Def. 6.2]{BHH}.\footnote{We remark that it is also possible to give an alternative proof of Proposition  \ref{prop_replacement2} without performing surgeries. However, the construction of suitable isotopies through the surgery necks is needed for the next corollary.} Denote the post-surgery domain by $K^\sharp$. Let $\tilde{\gamma}$ be the disjoint union of almost straight curves connecting the opposing standard caps as in \cite[Lem. 6.4]{BHH}. Note that these curves are a perturbation of pieces of $\gamma$.

Let $\mathcal{G}_{r_s}$ be the gluing map from \cite[Thm. 4.1]{BHH}, with small enough string radius $r_s$. Let $K^\sharp_t$ be the mean curvature flow evolution of $K^\sharp$, and let $\{\tilde{\gamma}_t\}$ be the family of curves which follows $K^\sharp_t$ by normal motion starting at $\tilde{\gamma}_0=\tilde{\gamma}$. We claim that for $\bar{t}$ small enough, and for a suitable family of curves $\{\tilde{\gamma}'_t\}$ very close to $\{\tilde{\gamma}_t\}$, there exists a strictly monotone mean convex isotopy between $K$ and $\mathcal{G}_{r_s}(K^\sharp_{\bar{t}},\tilde{\gamma}'_{\bar{t}})$.

To see this we fix a partition $0<\bar{t}_1<\bar{t}_2<\bar{t}_3<\bar{t}$, where $\bar{t}$ is small enough, and construct a suitable isotopy step by step as follows.

Note first that if there is a surgery with center $p$, then for $t$ small enough $K^\sharp_t$ can be expressed locally as a graph in $B_{6\Gamma_{\textrm{cap}}r_{\textrm{neck}}}(p)$ with small $C^{20}$-norm over a pair of opposing evolving standard caps $K^{\textrm{st}}_t$ starting at distance $\Gamma_\textrm{cap}r_{\textrm{neck}}$ from $p$ (c.f. \cite[Def. 2.4]{HK}).

Using the graphical representation we can project to these standard caps while at the same time letting evolve the configuration a little bit to ensure that the deformation is strictly monotone. Choosing $\bar{t}_1$ small and then recalling that $\eps$ is very small, we can thus find a strictly monotone mean convex isotopy $\{L_t\}_{t\in [0,\bar{t}_1]}$ starting at $L_0=K^\sharp$, such that for each surgery point $p$ we have that $L_{\bar{t}_1}\cap B_{5\Gamma_{\textrm{cap}}r_{\textrm{neck}}}(p)$ is exactly a pair of opposing standard caps. Moreover, we can slightly perturb the family $\{\tilde{\gamma}_t\}$ to get a family $\{\tilde{\gamma}'_t\}$ with the property that 
$\tilde{\gamma}'_{\bar{t}_1}$ connects these opposing standard caps in exactly straight lines.

Next, using \cite[Prop. 3.12]{BHH} we can find a strictly monotone mean convex isotopy $\{L_t\}_{t\in [\bar{t}_1,\bar{t}_2]}$, such that at time $\bar{t}_2$ for each surgery point $p$ we have that $L_{\bar{t}_2}\cap B_{5\Gamma_{\textrm{cap}}r_{\textrm{neck}}}(p)$ is pair of standard caps connected by a neck of radius $\rho(0.98)r_{\textrm{neck}}$, where $\rho$ is the radius function from gluing the ball and the cylinder \cite[Prop. 4.2]{BHH}.

Third, taking into account property (a) of the standard cap from \cite[Def. 2.8]{BHH} and using the fact that the gluing in the rotationally symmetric case is described by the explicit model from  \cite[Prop. 4.2]{BHH}, we can now decrease the neck radius from $\rho(0.98)r_{\textrm{neck}}$ down to $2r_s$ via a strictly monotone mean convex isotopy $\{L_t\}_{t\in [\bar{t}_2,\bar{t}_3]}$.

Finally, interpolating again between the rotationally symmetric and non-symmetric situation via the graphical representation as above we can find a strictly monotone mean convex isotopy $\{L_t\}_{t\in [\bar{t}_3,\bar{t}]}$ with $L_{\bar{t}}= \mathcal{G}_{r_s}(K^\sharp_{\bar{t}},\tilde{\gamma}'_{\bar{t}})$. During the process we shrink the string radius from $2r_s$ to $r_s$ to ensure that the deformation is strictly monotone.

It remains to construct a strictly monotone mean convex isotopy that deforms $\mathcal{G}_{r_s}(K^\sharp_{\bar{t}},\tilde{\gamma}'_{\bar{t}})$ into a marble tree. To this end, if we choose $\bar{t}$ very small, then $K^\sharp_{\bar{t}}$ is as close as we want to $K^\sharp$. Then, inferring as in the proof of \cite[Prop. 7.4]{BHH} that the connected components of $K^\sharp_{\bar{t}}$ are either convex or capped off cylinders we see that there exists a strictly monotone mean convex isotopy $\{L_t\}_{t\in [0,1]}$ starting at $L_0=K^\sharp_{\bar{t}}$ such that $L_1$ is a finite union of round balls. 
Letting $r_{\textrm{min}}$ be the smallest radius among the radii of the balls of $L_1$, let $\{L_t\}_{t\in [1,2]}$ be a strictly monotone mean convex isotopy that concatenates smoothly at $t = 1$ and shrinks all balls further to balls of radius $r_{\textrm{min}}/10$.
Let $\{\gamma_t\}_{t\in[0,2]}$ be the family of curves which follows $L_t$ by normal motion starting at $\gamma_0=\tilde{\gamma}_{\bar{t}}$. Let $r_s(t)$ be a slowly decreasing positive function starting at $r_s(0)=r_s$ from above.
Then $\{\mathcal{G}_{r_s(t)}(K^\sharp_{\bar{t}},\tilde{\gamma}'_{\bar{t}})\}_{t\in [0,2]}$ is a strictly monotone mean convex isotopy that deforms the domain $\mathcal{G}_{r_s}(K^\sharp_{\bar{t}},\tilde{\gamma}'_{\bar{t}})$ into a marble tree with marble radius $r_{\textrm{min}}/10$ and string radius $r_s(2)$.

Taking a smooth concatenation of the above isotopies, this proves the proposition.
\end{proof}

\begin{corollary}[{Replacement for \cite[Prop. 6.5]{BHH}}]\label{prop_replacement1}
There exists a constant $\bar{\delta}>0$ with the following significance. Let $\delta\leq\bar{\delta}$, assume $K^\sharp$ is obtained from $K^-$ by performing surgeries on a disjoint collection of $\delta$-necks, and let $\gamma$ be the union of the almost straight lines connecting the tips of the opposing standard caps as in \cite[Lem. 6.4]{BHH}. Let $\{K^\sharp_t\}$ be a strictly monotone mean convex evolution of $K^\sharp$, and let $\{\gamma_t\}$ be the family of curves which follows $K^\sharp_t$ by normal motion starting at $\gamma$.
Then, for $r_s$ small enough, every small enough $\bar{t}$, and a suitable perturbation of $\{\gamma_t\}$ which we denote again by $\{\gamma_t\}$, there exists a strictly monotone mean convex isotopy between $K^-$ and $\mathcal{G}_{r_s}(K^\sharp_{\bar{t}},\gamma_{\bar{t}})$.
\end{corollary}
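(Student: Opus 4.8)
The plan is to reproduce the step-by-step construction from the proof of Proposition \ref{prop_replacement2}, observing that the statement here differs from the displayed claim inside that proof only in two harmless ways. First, instead of a single $\varepsilon$-neck sitting inside a capped $\varepsilon$-tube we now have a disjoint collection of $\delta$-necks inside a general domain $K^-$; but, just as the surgeries in \cite[Def. 1.17]{HK} are supported in disjoint balls $B_{6\Gamma_{\textrm{cap}}r_{\textrm{neck}}}(p)$ around the surgery points $p$, all of the modifications we perform are local near these points and are independent for distinct necks, so the general case reduces to the analysis near a single surgery neck. Second, instead of the mean curvature flow evolution we now have an arbitrary strictly monotone mean convex evolution $\{K^\sharp_t\}$; but the four-stage construction in the proof of Proposition \ref{prop_replacement2} only ever used the strict monotonicity of the evolution together with the standard-cap structure of $K^\sharp$ (never that the evolution was mean curvature flow), so it goes through verbatim.

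Concretely, fix a surgery point $p$. For $\bar t$ small enough, $K^\sharp_t$ is a $C^{20}$-small graph in $B_{6\Gamma_{\textrm{cap}}r_{\textrm{neck}}}(p)$ over a pair of opposing evolving standard caps $K^{\textrm{st}}_t$ starting at distance $\Gamma_{\textrm{cap}}r_{\textrm{neck}}$ from $p$, exactly as in the proof of Proposition \ref{prop_replacement2} (using \cite[Def. 2.4]{HK}, strict monotonicity of $\{K^\sharp_t\}$, and interior estimates). Fixing a partition $0<\bar t_1<\bar t_2<\bar t_3<\bar t$, I would then build a strictly monotone mean convex isotopy in four stages: on $[0,\bar t_1]$, starting from $K^-$ (which near $p$ is a $\delta$-neck, hence $C^{20}$-close to a round cylinder for $\delta\le\bar\delta$), project onto exact opposing standard caps joined by a neck of radius $\rho(0.98)r_{\textrm{neck}}$ while letting the configuration evolve a little so that the deformation is strictly monotone, and perturb $\{\gamma_t\}$ so that $\gamma_{\bar t_1}$ joins the caps in exactly straight lines (this is the role of \cite[Prop. 3.12]{BHH}); on $[\bar t_1,\bar t_2]$, using property (a) of the standard cap \cite[Def. 2.8]{BHH} and the explicit rotationally symmetric gluing model \cite[Prop. 4.2]{BHH}, shrink the connecting neck from radius $\rho(0.98)r_{\textrm{neck}}$ down to $2r_s$; on $[\bar t_2,\bar t_3]$, interpolate back between the rotationally symmetric configuration and the actual non-symmetric graph via the graphical representation while simultaneously shrinking the string radius from $2r_s$ to $r_s$; and on $[\bar t_3,\bar t]$, a final short adjustment reconciling the cap-region evolution with $K^\sharp_{\bar t}$, so as to land exactly at $\mathcal{G}_{r_s}(K^\sharp_{\bar t},\gamma_{\bar t})$, where $\{\gamma_t\}$ denotes the perturbed family. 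A smooth concatenation of these stages over all the (finitely many, disjoint) surgery necks yields the desired strictly monotone mean convex isotopy between $K^-$ and $\mathcal{G}_{r_s}(K^\sharp_{\bar t},\gamma_{\bar t})$.

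The main obstacle, exactly as in Proposition \ref{prop_replacement2}, is keeping every stage \emph{strictly} monotone — most delicately in the third stage, where passing between the explicit rotationally symmetric model and the non-symmetric graphical picture could a priori create a momentary outward motion. As in \cite{BHH} this is handled by building in a uniformly positive inward rate (slowly decreasing the neck and string radii and, if necessary, the graph heights, equivalently superimposing a tiny multiple of mean curvature flow) that dominates any outward drift coming from the interpolation; the freedom to choose $\varepsilon$, $\delta$, $\Gamma_{\textrm{cap}}$, $r_s$, $\bar t$ small and the fact that only finitely many disjoint necks are involved make this choice uniform. Once strict monotonicity is secured, all remaining estimates are the verbatim local analysis from the proof of Proposition \ref{prop_replacement2}.
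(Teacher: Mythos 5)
Your proposal is correct and takes essentially the same route as the paper: the paper proves this corollary simply by citing the proof of Proposition \ref{prop_replacement2}, whose four-stage local construction near each surgery neck (graphical representation over opposing standard caps, the neck-attachment via \cite[Prop. 3.12]{BHH}, shrinking the neck radius in the rotationally symmetric model, and interpolating back while decreasing the string radius to keep strict monotonicity) is exactly what you reproduce, together with the observation that only strict monotonicity of the evolution and the disjointness of the surgery regions are used. Your minor reordering of the stages and the explicit remark that the argument never uses that the evolution is mean curvature flow are harmless elaborations of the same argument.
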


\begin{proof}[{Proof of Corollary \ref{prop_replacement1}}]
This follows from the proof of Proposition \ref{prop_replacement2}.
\end{proof}

Continuing the proof of Theorem \ref{thm_foliation_r3}, let us first construct suitable isotopies for the discarded components $C_i^j$ at time $t_i$.

\begin{claim}\label{claim_inductionhyp}
All discarded components $C_i^j$ are isotopic to a marble tree, with an isotopy which strictly monotone and mean convex.
\end{claim}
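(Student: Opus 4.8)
The plan is to reduce everything to Proposition \ref{prop_replacement2} together with the structure theory of discarded components. First I would recall that, by the construction of mean curvature flow with surgery in \cite{HK}, every discarded component $C_i^j$ lies in the high-curvature region and is therefore entirely covered by canonical neighborhoods; this is precisely the input that underlies \cite[Prop. 7.4]{BHH}, and it yields that, for $\varepsilon$ chosen small enough, each $C_i^j$ is either \emph{(a)} a convex domain or \emph{(b)} a capped $\varepsilon$-tube in the sense of \cite[Def. 7.3]{BHH}. So it suffices to produce, in each of these two cases, a strictly monotone mean convex isotopy deforming $C_i^j$ to a marble tree.

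In case \emph{(b)} there is nothing to do beyond quoting Proposition \ref{prop_replacement2}, which says exactly that a capped $\varepsilon$-tube admits such an isotopy. In case \emph{(a)} I would run the same argument as in the final paragraph of the proof of Proposition \ref{prop_replacement2}: evolving $C_i^j$ by mean curvature flow gives a strictly nested family of strictly convex bodies which by Huisken's theorem \cite{Hui84} becomes as round as we like before extinction; a radial graphical interpolation onto an exactly round ball --- carried out while shrinking slightly so that the family stays strictly nested and mean convex --- followed by a monotone shrinking of that ball down to the prescribed marble radius, exhibits $C_i^j$ as strictly monotone mean convex isotopic to a single round ball, which is a marble tree with one marble and no strings. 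Since there are only finitely many discarded components and all parameters (in particular the target marble and string radii) were fixed once and for all as in \cite[p.~25]{BHH}, we may arrange all of these isotopies to land on marble trees of the same marble and string radii, and a smooth concatenation then gives the claim.

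There is no genuinely new difficulty here: the real work has been front-loaded into Proposition \ref{prop_replacement2} (and, for the classification step, into \cite[Prop. 7.4]{BHH}). The only points that require a little care are, first, verifying that the canonical neighborhood theorem for the flow with surgery really confines each discarded component to one of the two model shapes above --- so that no component with more intricate neck-and-cap combinatorics can be discarded --- and, second, making sure that strict monotonicity is maintained throughout the rounding step in case \emph{(a)}, which is why one shrinks the body slightly while interpolating rather than simply projecting radially.
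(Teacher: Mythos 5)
Your argument is essentially the paper's: classify each discarded component as either convex or a capped $\varepsilon$-tube, contract the convex ones to a small round ball (a one-marble tree), and invoke Proposition \ref{prop_replacement2} for the capped tubes.

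The one place where your justification is incomplete is the classification step itself. Being in the high-curvature region and covered by canonical neighborhoods does \emph{not} by itself confine a discarded component to the two model shapes: the canonical neighborhood structure also allows an $\varepsilon$-tube whose ends close up on themselves, i.e.\ a discarded solid torus ($\varepsilon$-loop), which is not isotopic to a marble tree through spheres at all. You flag exactly this worry (``no component with more intricate neck-and-cap combinatorics'') but never discharge it. The paper rules it out by a topological remark you omit: since the initial domain $K$ is a $3$-disc and surgeries replace necks by pairs of caps, every connected component arising in the flow, and in particular every discarded component, is diffeomorphic to a ball; only then does the structure result \cite[Cor.~8.9]{BHH} give the dichotomy convex / capped $\varepsilon$-tube. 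With that sentence added, your proof matches the paper's (your extra detail in the convex case, via Huisken's theorem and a monotone rounding, is a fine elaboration of the paper's ``contracting to a small ball'').
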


\begin{proof}[{Proof of Claim \ref{claim_inductionhyp}}]
Our topological assumption on the initial domain $K$ together with the nature of the surgery process (see \cite[Def. 1.17]{HK}) implies that all discarded components are diffeomorphic to balls. Thus, by Corollary \cite[Cor. 8.9]{BHH}, each discarded component is either (a) convex or (b) a capped $\eps$-tube. Contracting to a small ball in case (a), respectively using Proposition \ref{prop_replacement2} in case (b), we can find a strictly monotone mean convex isotopy to a marble tree.
\end{proof}

Let $\A_i$ be the assertion that each connected component of the pre-surgery domain $K^i:=K_{t_{i}}^-$ is isotopic to a marble tree, with an isotopy which is strictly monotone and mean convex. Since $K_{t_{\ell}}^+=\emptyset$, we see that at the final time $t_{\ell}$ there was only discarding and no replacement of necks by caps (see \cite[Def. 1.17]{HK}). Thus, all connected components of $K^\ell=K_{t_\ell}^-$ get discarded, and Claim \ref{claim_inductionhyp} shows that $\A_\ell$ holds.

\begin{claim}\label{claim_inductionstep}
If $0<i<\ell$ and $\A_{i+1}$ holds, so does $\A_i$.
\end{claim}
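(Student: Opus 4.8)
The plan is to establish $\A_i$ from $\A_{i+1}$ by ``undoing'' the surgery performed at time $t_i$ using thin strings in place of cuts, and then flowing forward to time $t_{i+1}$ where $\A_{i+1}$ is available. First I would fix $i$ with $0<i<\ell$, assume $\A_{i+1}$, and set $K^i:=K_{t_i}^-$. Let $K^\sharp$ denote the domain obtained from $K^i$ by performing all the neck surgeries scheduled at time $t_i$ \emph{but keeping every resulting connected component} (i.e.\ before any discarding), and let $\gamma$ be the union of the almost straight curves connecting the tips of the opposing standard caps, as in \cite[Lem.~6.4]{BHH}. Since $K$ is a $3$-disc, and since both smooth mean curvature flow and neck surgery preserve the property of being a finite disjoint union of balls, every connected component of $K^i$ is a ball, the components of $K^\sharp$ contained in a fixed such ball are again balls, and the dual graph (components as vertices, surgery necks as edges) is a tree. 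Consequently the connected components of the string-reconnected domain $\mathcal{G}_{r_s}(K^\sharp_{\bar t},\gamma_{\bar t})$ are in bijection with those of $K^i$, and it suffices to deform each of them into a marble tree by a strictly monotone mean convex isotopy.

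The second step is to apply Corollary \ref{prop_replacement1} with $K^-=K^i$ and the above $K^\sharp$: for $r_s$ and $\bar t$ small enough it yields a strictly monotone mean convex isotopy from $K^i$ to $\mathcal{G}_{r_s}(K^\sharp_{\bar t},\gamma_{\bar t})$, where $\{\gamma_t\}$ follows $\{K^\sharp_t\}$ by normal motion up to a harmless perturbation. Now $\{K^\sharp_t\}_{t\in[t_i,t_{i+1}]}$ is a smooth, hence strictly monotone mean convex, mean curvature flow evolution, and $K^\sharp_{t_{i+1}}$ is the disjoint union of the connected components of $K^{i+1}=K_{t_{i+1}}^-$ together with (the images of) the discarded components $C_i^j$ from time $t_i$. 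Continuing this evolution from time $\bar t$ up to time $t_{i+1}$ while dragging the strings along and slowly shrinking the string radius via $\mathcal{G}_{r_s(t)}$ --- exactly as in the last paragraph of the proof of Proposition \ref{prop_replacement2} --- produces a strictly monotone mean convex isotopy carrying $\mathcal{G}_{r_s}(K^\sharp_{\bar t},\gamma_{\bar t})$ to a string-reconnection of the components of $K^{i+1}$ and of the $C_i^j$ (a bulk piece that becomes extinct along the way is simply left as a small marble). At this stage I would invoke $\A_{i+1}$ on each component of $K^{i+1}$ and Claim \ref{claim_inductionhyp} on each discarded component $C_i^j$ to deform all the bulk pieces into marble trees by strictly monotone mean convex isotopies, carrying the attached strings along and thinning them further as needed. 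Each connected component of the resulting domain is then a tree of marble trees joined by strings, which after harmonizing the marble and string radii is itself a marble tree (c.f.\ \cite{BHH}). A smooth concatenation of all of these isotopies establishes $\A_i$.

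The main obstacle is the bookkeeping required to maintain \emph{strict} monotonicity (and mean convexity) at every stage: one must simultaneously continue the smooth mean curvature flow of the bulk pieces, slide the endpoints of the attached strings along the moving boundary, and shrink the string radii, all while the total domain stays mean convex and its boundary moves strictly inward. This is exactly the point where the present argument departs from \cite{BHH}, which only needed a not-necessarily-monotone isotopy, and it relies on the good behaviour of the gluing map $\mathcal{G}_{r_s}$ from \cite[Thm.~4.1]{BHH} together with the explicit rotationally symmetric gluing model \cite[Prop.~4.2]{BHH}, used just as in the proof of Proposition \ref{prop_replacement2}. A secondary point requiring care is that a single component of $K^i$ may be split by the surgery necks at $t_i$ into both surviving and to-be-discarded pieces; this is handled uniformly by Corollary \ref{prop_replacement1} combined with the classification \cite[Cor.~8.9]{BHH} of discarded components as convex or capped $\varepsilon$-tubes.
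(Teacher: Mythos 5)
Your overall architecture (string-reconnect the post-surgery domain via Corollary \ref{prop_replacement1}, drag the strings along by normal motion while shrinking the string radius, and feed the bulk pieces into $\A_{i+1}$ and Claim \ref{claim_inductionhyp}) matches the paper, but there is a genuine gap in the middle step. You assert that $\{K^\sharp_t\}_{t\in[t_i,t_{i+1}]}$ --- the smooth mean curvature flow of the \emph{entire} post-surgery domain, including the discarded components $C_i^j$ --- exists and is a strictly monotone mean convex evolution up to time $t_{i+1}$. This is unjustified and in general false: the discarded components are exactly the pieces the surgery algorithm refuses to continue flowing. A discarded component is, by \cite[Cor.~8.9]{BHH}, either convex or a capped $\eps$-tube, and a capped $\eps$-tube can develop a neckpinch singularity under smooth mean curvature flow well before $t_{i+1}$ (the $\alpha$-Andrews condition does not prevent neckpinches). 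If that happens, your isotopy simply ceases to exist; likewise the fallback ``a bulk piece that becomes extinct along the way is simply left as a small marble'' has no meaning for a piece that becomes singular rather than extinct, and the strings attached to such a piece cannot be followed by normal motion through the singularity. Note also that Claim \ref{claim_inductionhyp} applies to the discarded components \emph{at time $t_i$}; their hypothetical MCF images at $t_{i+1}$ need no longer be convex or capped $\eps$-tubes, so you cannot invoke it there either.

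The paper's proof avoids this entirely: only $K_{t_i}^+$ (the surviving part) is evolved smoothly to $K^{i+1}$, where the induction hypothesis is applied, while the discarded components are deformed to marble trees directly at time $t_i$ via Claim \ref{claim_inductionhyp} (whose proof goes through Proposition \ref{prop_replacement2}, i.e.\ further surgeries and string gluings rather than smooth flow to extinction). These strictly monotone mean convex isotopies are then combined into a single family $\{L_t\}_{t\in[0,1]}$ deforming $K_{t_i}^\sharp$ into a union of marble trees, Corollary \ref{prop_replacement1} is applied relative to $\{L_t\}$, and the strings $\gamma_t$ are dragged along $\partial L_t$ by normal motion --- with the additional modification via \cite[Lem.~9.4]{BHH} at the finitely many times when $\gamma_t$ hits $\partial X$, a point your proposal also omits. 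To repair your argument, replace the step ``flow $K^\sharp$ to $t_{i+1}$'' by this combination of (flow of the surviving part $+$ $\A_{i+1}$) with (Claim \ref{claim_inductionhyp} applied to the $C_i^j$ at time $t_i$), and drag the strings along that combined isotopy.
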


\begin{proof}[{Proof of Claim \ref{claim_inductionstep}}]
Smooth evolution by mean curvature flow provides a strictly monotone mean convex  isotopy between $K_{t_i}^+$ and $K^{i+1}$. Recall that $K_{t_{i}}^+\subseteq K_{t_{i}}^\sharp\subseteq K_{t_{i}}^-=K^i$ is obtained by performing surgery on a minimal collection of disjoint $\delta$-necks separating the thick part and the trigger part and/or discarding connected components that are entirely covered by canonical neighborhoods (see \cite[Def. 1.17]{HK}).

By induction hypothesis the connected components of $K^{i+1}$ are isotopic to marble trees, and by Claim \ref{claim_inductionhyp} the discarded components are isotopic to marble trees as well, where all the isotopies can be chosen strictly monotone and mean convex. It follows that there exists a strictly monotonic mean convex isotopy $\{L_t\}_{t\in [0,1]}$ deforming $L_0=K_{t_{i}}^\sharp$ into a union of marble trees $L_1$. If $L_0$ has more than one connected component, then we glue together these isotopies as follows.

For each surgery neck at time $t_i$, select an almost straight line $\gamma_i^j$ between the tips of the corresponding pair of standard caps in $K_{t_{i}}^\sharp$ as in \cite[Lem. 6.4]{BHH}. Let $\gamma=\bigcup_{j}\gamma_i^j$. Define $\gamma_t$ by following the points where $\gamma_t$ touches $\partial L_t$ via normal motion.
By Corollary \ref{prop_replacement1} the domain $K^i=K_{t_i}^-$ is isotopic to $\mathcal{G}_{r_s}(L_{\bar{t}}^\sharp,\gamma_{\bar{t}})$ via a strictly monotone mean convex isotopy, provided $r_s$ and $\bar{t}$ are small enough. Finally define $\{\gamma_t\}_{t\in [\bar{t},1]}$ essentially by following the points where $\gamma_t$ touches $\partial L_t$ via normal motion. It can happen at finitely many times $t$ that $\gamma_{t}$ hits $\partial X$, see \eqref{def_of_x}. In that case, we modify $\gamma_t$ according to \cite[Lem. 9.4]{BHH}, and then continue via normal motion. Let $r_s(t)$ be a slowly decreasing positive function starting at $r_s(0)=r_s$ from above. Then $\mathcal{G}_{r_s(t)}(L_t,\gamma_t)_{t\in [\bar{t},1]}$ gives the last bit of the desired isotopy. This finishes the proof of Claim \ref{claim_inductionstep}.
\end{proof}

It follows from backwards induction on $i$, that $\A_1$ holds. Smooth mean curvature flow provides a strictly monotone mean convex isotopy between $K$ and $K^1$. In particular, $K^1$ has only one connected component. Thus there exists a strictly monotone mean convex isotopy deforming $K$ into a marble tree. Finally, we can shrink the marble radius and the string radius to zero to obtain the final bit of the desired foliation. This finishes the proof of Theorem \ref{thm_foliation_r3}.
\end{proof}

\section{Flow with surgery in general ambient manifolds}\label{sec_surgery_ambient}

Mean curvature flow with surgery in general ambient manifolds has been constructed by Brendle-Huisken \cite{BrendleHuisken}. 
For our application however, in addition to existence it is important to also have a canonical neighborhood theorem -- most conveniently in the form of \cite[Thm. 1.22]{HK}. The goal of this section is thus to extend the construction from Haslhofer-Kleiner \cite{HK} to general ambient manifolds.

Let us start by adapting some definitions to the setting of general ambient manifolds.

\begin{definition}[{c.f. {\cite[Def. 1.1]{HK}}}]\label{def_smooth_alpha}
Let $\alpha>0$.
A smooth $\alpha$-Andrews flow $\{K_t\subseteq U\}_{t\in I}$ in an open set $U\subseteq M$ in a closed Riemannian three-manifold $M$ is a smooth family of mean convex domains moving by mean curvature flow with $\inf H\geq 4\alpha/\textrm{inj}(M)$ such that for every $p\in\partial K_t$ the two closed balls $\bar{B}^\pm(p)$ with radius $r(p)=\alpha/H(p)$ and center $c^\pm(p)=\exp_p(\pm r(p)\nu(p))$, where $\nu$ is the inwards unit normal, satisfy
$\bar{B}^+(p)\cap U\subseteq K_t$ and $\bar{B}^-(p)\cap U\subseteq U\setminus\textrm{Int}(K_t)$, respectively.
\end{definition}

The notion of $(\alpha,\delta)$-flow is then defined as in \cite[Def. 1.3]{HK}, where smooth $\alpha$-Andrews flows are now defined via Definition \ref{def_smooth_alpha} and strong $\delta$-necks and surgery on such a neck are defined as follows.

\begin{definition}[{c.f. {\cite[Def. 2.3]{HK}}}]\label{def_strong_neck}
We say that an $(\alpha,\delta)$-flow $\mathcal{K}=\{K_t\subseteq U\}_{t\in I}$ has a strong $\delta$-neck with center $p$ and radius $s$ at time $t_0\in I$ if
$4s/\delta\leq \textrm{inj}(M)$ and
$\{ s^{-1}\cdot \exp_p^{-1}(K_{t_0+s^2 t}\cap B_{2s/\delta}(p)\cap U)\}_{t\in (-1,0]}$ is $\delta$-close in $C^{\lfloor 1/\delta \rfloor}$ in $B_{1/\delta}\cap s^{-1}\cdot \exp_p^{-1}(B_{2s/\delta} \cap U)\subset\mathbb{R}^3$ to the evolution of a solid round cylinder $\bar{D}^2\times \mathbb{R}$ with radius $1$ at $t=0$.
\end{definition}

The notion of replacing the final time slice of a strong $\delta$-neck by a pair of standard caps (surgery) is then defined as in \cite[Def. 2.4]{HK} where the items (3) and (4) of that definition are replaced by:
\begin{itemize}
\item[(3')] If  $B(p,5\Gamma s)\subseteq U$, then for every point $p_\sharp \in \partial K^\sharp\cap B(p,5\Gamma s)$ with $\lambda_1(p_\sharp)<0$, there is a point $p_{-}\in \partial K^-\cap B(p,5\Gamma s)$ with $\tfrac{\lambda_1}{H}(p_{-})\leq \tfrac{\lambda_1}{H}(p_\sharp)+\delta'(s)$.
\item[(4')] $B(p,10\Gamma s)\subseteq U$, then $s^{-1}\cdot \exp_p^{-1}(K^\sharp)$ is $\delta'(\delta)$-close in $B(0,10\Gamma)\subset \mathbb{R}^3$ to a pair of opposing standard caps at distance $\Gamma$ from the origin.
\end{itemize}

In the above, $\delta'$ is a positive function with $\lim_{x\to 0}\delta'(x)=0$.

Having adapted the definitions to the setting of general ambient manifolds, let us now discuss the local curvature estimate, the convexity estimate and the global convergence theorem.

\begin{theorem}[{Local curvature estimate, c.f. \cite[Thm. 1.6]{HK}}]\label{thm_loc_curv}
There exist $\bar{\delta}=\bar{\delta}(\alpha,M)>0$, $\bar{r}=\bar{r}(\alpha,M)>0$, $\rho=\rho(\alpha,M)>0$, and $C_\ell=C_\ell(\alpha,M)<\infty$ with the following property. If $\mathcal{K}$ is an $(\alpha,\delta)$-flow ($\delta\leq \bar{\delta}$) in a parabolic ball $P(p,t,r)\subset M\times \mathbb{R}$ centered at a boundary point $p\in\partial K_t$ with $H(p,t)\leq r^{-1}$ and $r\leq\bar{r}$, then
\begin{equation}
\sup_{P(p,t,\rho r)\cap \partial\mathcal{K}}| \nabla^\ell A | \leq C_\ell r^{-1-\ell}.
\end{equation} 
\end{theorem}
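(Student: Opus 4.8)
The plan is to reduce to the Euclidean local curvature estimate \cite[Thm. 1.6]{HK} by a blow-up argument, using that on the scales where the estimate is claimed, the ambient manifold $M$ looks almost Euclidean. First I would fix the ambient geometry: since $M$ is closed, there is a uniform $C^3$-bound on the metric and a uniform injectivity radius $\textrm{inj}(M)$, so after rescaling by $r^{-1}$ the metric $g_r := r^{-2}\exp_p^\ast g$ on a ball of fixed radius in $\mathbb{R}^3$ is $\varepsilon(r)$-close in $C^2$ to the Euclidean metric, with $\varepsilon(r)\to 0$ as $r\to 0$. The key point is that the definition of an $(\alpha,\delta)$-flow (Definitions \ref{def_smooth_alpha}, \ref{def_strong_neck}, and the modified surgery items (3'),(4')) has been set up precisely so that these notions are \emph{scale-invariant and almost metric-independent} on small scales: the condition $\inf H\geq 4\alpha/\textrm{inj}(M)$ together with $H(p,t)\leq r^{-1}$ forces $r\lesssim \textrm{inj}(M)/\alpha$, and the ball conditions $\bar B^\pm(p)\cap U$ rescale, under $g_r$, to the Euclidean $\alpha$-noncollapsing condition up to an error controlled by $\varepsilon(r)$; the strong $\delta$-neck and standard-cap conditions are already phrased through $\exp_p^{-1}$ and survive the rescaling with a slightly worse constant $\delta'(\delta,\varepsilon(r))$.

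Next I would run the argument by contradiction exactly as in \cite[Sec. 4]{HK}: suppose the estimate fails, so there is a sequence of $(\alpha,\delta_i)$-flows $\mathcal{K}_i$ with $\delta_i\to 0$, radii $r_i\leq \bar r$, basepoints $p_i$, and points in $P(p_i,t_i,\rho r_i)$ where $|\nabla^{\ell_i}A|$ exceeds $i\, r_i^{-1-\ell_i}$. By the point-selection lemma (the same as in \cite{HK}, which is purely local) one extracts a new sequence of basepoints and scales $\lambda_i\to\infty$ such that, after parabolically rescaling $\mathcal{K}_i$ by $\lambda_i$ and using $g_{\lambda_i}$, the rescaled flows have bounded curvature near the basepoint but curvature blowing up; crucially $\lambda_i r_i^{-1}\to\infty$ would be needed, and in any case the rescaled ambient metrics converge in $C^2_{loc}$ to the flat metric on $\mathbb{R}^3$. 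Standard interior estimates for mean curvature flow in these almost-Euclidean backgrounds (e.g. Shi-type estimates, uniform by the $C^3$-bound on $g$) then give smooth subconvergence of the rescaled flows to an ancient solution $\mathcal{K}_\infty$ in $\mathbb{R}^3$, which is a smooth $\alpha$-Andrews flow (the ball conditions pass to the limit since $\varepsilon(\lambda_i^{-1})\to 0$) with a curvature concentration point — contradicting the Euclidean local curvature estimate, or more directly the classification of such ancient $\alpha$-Andrews flows from \cite{HK}.

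The main obstacle, and the part requiring genuine care rather than routine translation, is verifying that \emph{the surgeries do not interfere with the blow-up limit}: one must show that in the rescaled sequence, surgery regions either escape to spatial infinity or, after passing to the limit, disappear — i.e. that near a curvature-concentration point the flow is smooth with no surgery for a definite amount of backward time. This is handled as in \cite[Sec. 4]{HK} via the observation that surgeries happen only at a controlled (neck) scale $H_{\textrm{neck}}^{-1}$ and replace necks by standard caps, so a point with curvature much larger than $H_{\textrm{neck}}$ is automatically in a region that was smooth for the relevant time interval; the only new input is checking that this neck-scale analysis, the "being $\delta$-close to a standard cap" condition (4'), and the comparison condition (3') all survive the passage from $g$ to $g_{\lambda_i}$ with the same limiting consequences, which follows from the $\varepsilon(r)\to 0$ closeness together with continuity of all the relevant quantities under $C^2$-perturbation of the metric. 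Once this is in place, the constants $\bar\delta,\bar r,\rho,C_\ell$ depending only on $\alpha$ and $M$ (through $\textrm{inj}(M)$ and the $C^3$-norm of $g$) come out of the compactness argument in the usual way.
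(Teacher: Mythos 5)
Your overall strategy (contradiction plus blow-up, reducing to the Euclidean situation of \cite{HK} because the relevant scales shrink to zero) is the same as the paper's, but there is a genuine gap at the analytic heart of the argument. You claim that ``standard interior estimates for mean curvature flow in these almost-Euclidean backgrounds (e.g.\ Shi-type estimates)'' give smooth subconvergence of the rescaled flows to an ancient solution. This is circular: Shi-type interior estimates take a curvature bound as \emph{input} and output derivative bounds, whereas the whole content of the local curvature estimate is to produce a curvature bound from the $\alpha$-Andrews condition and the single pointwise bound $H(p,t)\leq r^{-1}$. Along the contradictory sequence the curvature is by construction unbounded near the basepoints, so no smooth compactness is available, and one cannot ``form the ancient limit with a curvature concentration point'' and then quote the Euclidean theorem. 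The actual proof in \cite{HK} (which the paper re-runs after blow-up) works with weak (Hausdorff) convergence of the domains, uses the interior/exterior balls of the Andrews condition to squeeze the limit boundary between halfspaces, and then derives the contradiction from the \emph{local regularity theorem}, whose proof rests on Huisken's monotonicity formula.

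This points to the second omission: the paper identifies exactly one nontrivial modification needed in a general ambient manifold, namely that Huisken's monotonicity formula holds with arbitrarily small error terms when the ambient space is almost Euclidean (Hamilton's result \cite[Thm.~B]{Ham_mon_amb}); without this, the local regularity step does not transfer from $\mathbb{R}^3$ to $M$. Your proposal never addresses the monotonicity formula or local regularity at all, and the ``$C^2$-closeness of $g_r$ to the Euclidean metric'' alone does not substitute for it, since the Euclidean statement cannot be applied verbatim to a flow in a perturbed metric at any finite stage of the sequence. Your discussion of why surgeries do not interfere (surgery necks occur at scales $s_j\leq \tfrac14\delta_j\,\mathrm{inj}(M)\to 0$) is in line with the paper's observation and is fine, but the proof as you have written it does not close without replacing the Shi-estimate step by the weak-convergence-plus-local-regularity argument and verifying the monotonicity formula with small errors in the almost-Euclidean setting.
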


\begin{proof}
Observe that along any contradictory sequence $H(p_j,t_j)\leq \bar{r}_j^{-1}\to \infty$ and $s_j\leq \tfrac{1}{4}\delta_j \textrm{inj}(M)\to 0$. Thus, after blowup everything reduces to the situation in Euclidean space and the proof from \cite{HK} goes through. The only nontrivial modification is to show that Huisken's monotonicity formula holds with arbitrarily small error terms if the ambient space is almost Euclidean, but this has been proved by Hamilton \cite[Thm. B]{Ham_mon_amb}
\end{proof}

\begin{theorem}[{Convexity estimate, c.f. \cite[Thm. 1.8]{HK}}]\label{thm_conv}
For all $\eps>0$, there exist $\bar{\delta}=\bar{\delta}(\alpha,M)>0$, $\bar{r}=\bar{r}(\eps,\alpha,M)>0$, and $\eta=\eta(\eps,\alpha,M)<\infty$ with the following property. If $\mathcal{K}$ is an $(\alpha,\delta)$-flow ($\delta\leq \bar{\delta}$) in a parabolic ball $P(p,t,\eta r)\subset M\times \mathbb{R}$ centered at a boundary point $p\in\partial K_t$ with $H(p,t)\leq r^{-1}$ and $r\leq\bar{r}$, then
\begin{equation}\label{conv_est}
\lambda_1(p,t)\geq -\eps r^{-1}.
\end{equation}
\end{theorem}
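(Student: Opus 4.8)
The plan is to run the same argument as for the local curvature estimate, Theorem~\ref{thm_loc_curv}: argue by contradiction and blow up, so that the whole question reduces to the Euclidean setting, where the convexity estimate is due to Huisken--Sinestrari and is reproved using noncollapsing in \cite{HK}. So suppose the assertion fails for some $\eps>0$. Letting the parabolic radius go to infinity, I would extract a sequence of $(\alpha,\delta_j)$-flows with $\delta_j\leq\bar{\delta}$ and boundary points $(p_j,t_j)$ with $H(p_j,t_j)\leq r_j^{-1}$, $r_j\to 0$, and $\lambda_1(p_j,t_j)<-\eps r_j^{-1}\leq-\eps H(p_j,t_j)$, with the flows defined on ever larger parabolic balls around these points. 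A point--selection argument in the spirit of Huisken--Sinestrari and Haslhofer--Kleiner then lets one pass to new points and scales at which $\lambda_1/H\leq-\eps/2$, the mean curvature is comparable to the inverse scale, and $\lambda_1/H$ is almost minimized on a parabolic neighbourhood of diverging rescaled size.

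Next I would rescale by the mean curvature at the selected point and translate it to the spacetime origin; in the limit three things happen, each for the same reason as in Theorem~\ref{thm_loc_curv} or because of the structure of the surgery. First, since the selected scales tend to zero, the injectivity radii of the rescaled ambient manifolds blow up and, by Hamilton's monotonicity formula with vanishing error terms \cite{Ham_mon_amb}, the rescaled ambient metrics converge to the flat metric on $\mathbb{R}^3$. Second, surgeries do not interfere: by item $(3')$ of the surgery definition a surgery never makes $\lambda_1/H$ more negative than it already was (up to an error tending to $0$ with $\delta$), and by item $(4')$ post-surgery regions are close to weakly convex pairs of standard caps, so the nearly $\lambda_1/H$-minimizing points stay a definite distance from every surgery and the surgeries are pushed off to infinity. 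Third, Theorem~\ref{thm_loc_curv} gives uniform bounds on $A$ and all its derivatives on fixed parabolic balls. Passing to a subsequence therefore produces a smooth ancient $\alpha$-Andrews flow $\mathcal{K}_\infty$ in $\mathbb{R}^3$ with $H(0,0)=1$, $\lambda_1(0,0)\leq-\eps$, and with $(0,0)$ attaining $\inf_{\mathcal{K}_\infty}\lambda_1/H<0$.

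At this point the problem is purely Euclidean and I would invoke the argument from \cite{HK}: since $\lambda_1+\lambda_2=H>0$ on $\mathcal{K}_\infty$, a negative value of $\lambda_1$ cannot be a double eigenvalue, so at the spacetime minimum $\lambda_1$ is a simple smooth eigenvalue; Hamilton's strong maximum principle applied to the evolution equation of $\lambda_1/H$ then forces $\mathcal{K}_\infty$ to split off a line, so that one principal curvature vanishes identically, and together with the weak mean convexity $H\geq 0$ of the limit this gives $\lambda_1\equiv 0$ on $\mathcal{K}_\infty$, contradicting $\lambda_1(0,0)\leq-\eps<0$. The step I expect to be the genuine obstacle is not this last Euclidean argument, which is borrowed wholesale, but the blow-up reduction itself in the presence of surgeries: one has to choose the selection points and scales so that the rescaled flows exhaust $\mathbb{R}^3\times(-\infty,0]$ with uniformly bounded geometry, which requires combining the point selection with Theorem~\ref{thm_loc_curv} and with items $(3')$--$(4')$ to keep every surgery region away from the place where $\lambda_1/H$ is nearly extremal. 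The ambient-to-Euclidean passage, by contrast, is handled verbatim as for Theorem~\ref{thm_loc_curv} via Hamilton's monotonicity formula.
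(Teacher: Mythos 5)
Your overall strategy---contradiction, blow-up to reduce to the Euclidean situation, surgeries controlled through items (3') and (4'), and a strict maximum principle applied to $\lambda_1/H$ in the limit---is the same as the paper's, but there are two places where your version has a real gap. First, the step you yourself flag as the main obstacle, namely a point selection making $\lambda_1/H$ ``almost minimized'' on parabolic neighbourhoods of diverging rescaled size, is never carried out, and it is precisely the step the paper replaces by a different device: it defines $\eps_0\in[0,\alpha^{-1}]$ as the infimum of the $\eps$'s for which the assertion holds and assumes $\eps_0>0$. A counterexample sequence then automatically satisfies $\tfrac{\lambda_1}{H}(p_j,t_j)\to-\eps_0$, and by the very definition of $\eps_0$ one also gets $r_jH(p_j,t_j)\to 1$ and the lower bound $\lambda_1/H\gtrsim -\eps_0$ at nearby points of controlled curvature, so no selection on the ratio is needed at all. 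Second, you aim to extract a smooth \emph{global ancient} $\alpha$-Andrews flow in $\mathbb{R}^3$. At this stage that is not available: the global convergence theorem (Theorem \ref{thm_glob_conv}) is proved afterwards and relies on the convexity estimate, so appealing to a global ancient limit here risks circularity. The paper only takes a local smooth limit in a parabolic ball of definite rescaled size, which is exactly what Theorem \ref{thm_loc_curv} provides, and on this local limit the ratio $\lambda_1/H$ attains a negative interior minimum $-\eps_0$ at $(0,0)$, which already contradicts the strict maximum principle as in \cite[Proof of Thm. 1.8]{HK_meanconvex}.

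A smaller discrepancy concerns the surgeries: you argue that the nearly minimizing points stay a definite distance from every surgery region, but the paper does not claim this. Instead, when the flow is modified by surgeries at scale comparable to $r_j$ arbitrarily close to $(p_j,t_j)$, the paper places the point in a post-surgery cap region and uses item (3') (with $\delta'(s_j)\to 0$ because the surgery neck radii satisfy $s_j\to 0$) to pass to a pre-surgery point at controlled distance whose ratio $\lambda_1/H$ is still almost $-\eps_0$; after this replacement the blow-up goes through smooth pre-surgery time slices. Your Euclidean endgame (simple eigenvalue at the minimum, strong maximum principle, splitting, $\lambda_1\equiv 0$ versus $\lambda_1(0,0)<0$) is consistent with the argument the paper imports from \cite{HK_meanconvex}, but it should be run on the local limit rather than on a global ancient one.
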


\begin{proof}
The argument is by selecting a sequence of counterexamples that avoids the post-surgery time-slices, similarly as in \cite{HK}. Since this has do be done somewhat carefully, let us spell out the details:

Fix $\alpha>0$, and let $\bar{\delta}=\bar{\delta}(\alpha,M)>0$ be small enough. Let $\eps_0\in [0,\alpha^{-1}]$ be the infimum of $\eps$'s for which there are some constants $\bar{r}$ and $\eta$ such that the assertion of the theorem holds, and suppose towards a contradiction that $\eps_0>0$.

It follows that there is a sequence $\{\mathcal{K}^j \}$ of $(\alpha,\delta_j )$-flows, $\delta_j\leq\bar{\delta}$, in $P(p_j,t_j,jr_j)$ such that $H(p_j,t_j) \leq r_j^{-1}\to \infty$, but $\tfrac{\lambda_1}{H}(p_j,t_j) \to -\eps_0$ as $j\to\infty$. By the choice of $\eps_0$, it must be the case that $r_j H(p_j, t_j) \to 1$ as $j\to \infty$, since otherwise we could build a new sequence where $\tfrac{\lambda_1}{H}$ tends to something strictly smaller than $-\eps_0$.

By Theorem \ref{thm_loc_curv} we have bounds for $A$ and its space-time derivatives in $P(p_j,t_j,\tfrac{\rho}{2} r_j)$. Suppose there is no $\gamma > 0$ such that the flow is unmodified by surgeries in $P(p_j,t_j,\gamma r_j)$ after passing to a subsequence. We may assume (after wiggling a bit by factors tending to $1$ as $j\to \infty$), that $t_j$ is a surgery time, and that $(p_j,t_j)$ lies in $\partial K_{t_j}^\sharp\cap B(p_j, 5\Gamma s_j)$, c.f. \cite[Def. 2.4]{HK}. The radius of the surgery neck $s_j$ is comparable to $r_j$, again by \cite[Def. 2.4]{HK}, in particular we see that $s_j\to 0$. Thus, by item (3') of the definition of replacing a neck by caps, and since $\delta'(s_j)\to 0$ as $j\to \infty$, after passing to some point at controlled distance in the pre-surgery manifold we may assume that $(p_j,t_j)$ lies in the presurgery manifold $\partial K_{t_j}^-$, $H(p_j, t_j) = r_j^{-1}$, and $\tfrac{\lambda_1}{H}(p_j,t_j) \to -\varepsilon_0$ as $j\to \infty$.

After modifying the sequence as described, the argument can be concluded as in \cite[Proof of Thm.
1.8]{HK_meanconvex}. Namely, using Theorem \ref{thm_loc_curv} we can pass to a blow up limit $\mathcal{K}^\infty$ which is a smooth
mean curvature flow defined in a parabolic ball $P (0, 0, r)\subset\mathbb{R}^3\times\mathbb{R}$ such that the
ratio $\lambda_1/H$ attains a negative minimum $-\eps_0$ at $(0, 0)$; this contradicts the 
strict maximum principle.
\end{proof}

\begin{theorem}[{Global convergence theorem, c.f. \cite[Cor. 2.30]{HK}}]\label{thm_glob_conv}
There exists $\bar{\delta}=\bar{\delta}(\alpha,M)>0$ with the following property. If $\mathcal{K}^j$ is a sequence of $(\alpha,\delta_j)$-flows ($\delta_j\leq \bar{\delta}$) in $M$ and $(p_j,t_j)\in\partial \mathcal{K}^j$ is a sequence of points with $H(p_j,t_j)\to\infty$ then,\footnote{We tacitly assume the flows are defined at least for $t\in [t_j-\eps,t_j]$ for some $\eps>0$.} after passing to a subsequence and discarding connected components in $B(p,\Lambda_j H^{-1}(p_j,t_j))$ that do not contain $p_j$ for a suitable sequence $\Lambda_j\to \infty$, the sequence $\hat{K}_t^j=H(p_j,t_j)\cdot \exp_{p_j}(K_{t_j+H^{-2}(p_j,t_j)t})$
converges smoothly and globally (c.f. \cite[Def. 2.31]{HK}) to a limit $\mathcal{K}^\infty=\{K_t^\infty\subset\mathbb{R}^3\}_{t\in (-\infty,0]}$ which is a generalized $(\alpha,\delta)$-flow (see \cite[Def. 2.29]{HK}) in Euclidean three-space with convex time slices.
\end{theorem}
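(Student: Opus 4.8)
The plan is to run the proof of \cite[Cor. 2.30]{HK} essentially verbatim. The point is that after rescaling by $H(p_j,t_j)\to\infty$ the ambient geometry becomes negligible, so one is effectively reduced to the Euclidean situation, for which the two crucial ingredients --- the local curvature estimate (Theorem \ref{thm_loc_curv}) and the convexity estimate (Theorem \ref{thm_conv}) --- are now available also in general ambient manifolds. Concretely, one extracts a subsequential limit of the rescaled flows from local uniform bounds, checks that the limit is an ancient generalized $(\alpha,\delta)$-flow in $\mathbb{R}^3$, and finally upgrades it to have convex time slices.

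First I would set up the blow-up. Write $\lambda_j:=H(p_j,t_j)\to\infty$ and pass to $\hat g_j$-normal coordinates at $p_j$, where $\hat g_j:=\lambda_j^2\,(\exp_{p_j})^\ast g$. The metrics $\hat g_j$ converge in $C^\infty_{\mathrm{loc}}$ to the Euclidean metric on $\mathbb{R}^3$, the injectivity radius of $\hat g_j$ tends to $\infty$, and the rescaled flows $\hat K^j_t$ are $(\alpha,\delta_j)$-flows with respect to $\hat g_j$ on balls of radius $R_j\to\infty$. Since $\lambda_j^{-1}\to 0\le\bar r$, for each fixed $R$ and all $j$ large Theorem \ref{thm_loc_curv} applies on a parabolic neighbourhood of $(p_j,t_j)$ of definite rescaled size and gives $|\hat\nabla^\ell\hat A|\le C_\ell$ there; propagating this bound outward by the continuity/covering argument of \cite{HK}, any point at bounded rescaled distance from $p_j$ lying in the component of $p_j$ has $\hat H$, and hence all $|\hat\nabla^\ell\hat A|$, bounded.

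The remaining preparatory steps follow \cite{HK}: (i) using item $(3')$ of the modified surgery definition, after replacing $(p_j,t_j)$ by a nearby point at controlled rescaled distance one may assume the flow is unmodified in a definite parabolic neighbourhood, so that the estimates above genuinely apply; (ii) a diagonal argument produces a sequence $\Lambda_j\to\infty$ such that, after discarding the components of $\hat K^j$ in $B(p_j,\Lambda_j\lambda_j^{-1})$ not containing $p_j$, the rescaled flows have locally uniformly bounded geometry on compact subsets of $\mathbb{R}^3$; (iii) since the flows are defined on $[t_j-\eps,t_j]$, the rescaled intervals $[-\eps\lambda_j^2,0]$ exhaust $(-\infty,0]$, and Theorem \ref{thm_loc_curv} also controls the flow backward in time. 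With these in place, the compactness theorem for generalized mean curvature flows yields a subsequential limit $\mathcal{K}^\infty=\{K^\infty_t\subset\mathbb{R}^3\}_{t\in(-\infty,0]}$, and the $C^\infty_{\mathrm{loc}}$ estimates away from surgery regions make this convergence smooth and global in the sense of \cite[Def. 2.31]{HK}. The $\alpha$-Andrews ball condition is closed under $C^1$-convergence, and the $\hat g_j$-balls of radius $\alpha/\hat H$ converge to the corresponding Euclidean balls, so $\mathcal{K}^\infty$ is a generalized $(\alpha,\delta)$-flow in Euclidean three-space. Finally, Theorem \ref{thm_conv} applied to $\mathcal{K}^\infty$ gives, for every $\eps>0$ and every boundary point, $\lambda_1\ge-\eps r^{-1}$ at scale $r=H^{-1}$ (taking $r\le\bar r$ by scale invariance of the estimate in $\mathbb{R}^3$), hence $\lambda_1\ge 0$; thus the time slices are convex, which in particular rules out necks and is consistent with the smoothness of the convergence.

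I expect the main obstacle to be the careful handling of the surgery and discarding regions --- precisely steps (i) and (ii) --- namely verifying that a point near a surgery neck, or inside a component about to be discarded, can be traded (at controlled rescaled distance, using items $(3')$ and $(4')$) for a point where the local curvature estimate applies, and that the discarding scale $\Lambda_j$ can be chosen to diverge. This is exactly the delicate part of the corresponding argument in \cite{HK}, and it transfers to the present setting because the blow-up makes the geometry almost Euclidean; beyond this bookkeeping there is no genuinely new difficulty compared with \cite{HK}, since Hamilton's monotonicity-with-small-error \cite[Thm. B]{Ham_mon_amb} --- already used in the proof of Theorem \ref{thm_loc_curv} --- absorbs the ambient curvature.
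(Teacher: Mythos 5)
Your proposal is correct and follows exactly the paper's route: the paper's proof is a one-line remark that the argument of \cite[Cor.~2.30]{HK} goes through verbatim once the Euclidean inputs \cite[Thm.~1.6, Thm.~1.8]{HK} are replaced by Theorems \ref{thm_loc_curv} and \ref{thm_conv}, and your write-up simply spells out the blow-up, the handling of surgery/discarding regions, and the passage to the convex limit that this citation leaves implicit (with the minor bookkeeping point that the convexity of the time slices is obtained by applying the convexity estimate along the sequence, or equivalently the scale-invariant Euclidean version to the limit, exactly as you indicate).
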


\begin{proof}
Using Theorem \ref{thm_loc_curv} and Theorem \ref{thm_conv} instead of \cite[Thm. 1.6, Thm. 1.8]{HK}, the proof from \cite{HK} goes through.
\end{proof}

In particular, note that blowups limits from Theorem \ref{thm_glob_conv} (as well as the standard solution) are always defined in Euclidean space and thus the discussion of their properties from \cite[Sec. 3]{HK} applies literally.

Finally, let us revisit the existence theorem and the canonical neighborhood theorem. The definition of $(\Balpha,\delta,\mathbb{H})$-flows is as in \cite[Def. 1.17]{BHH} with the following modifications:
\begin{itemize}
\item [i.] The parameter $\alpha$ is a parameter of the whole flow and not just of the initial domain.
\item [ii.] We assume in addition that $\inf H\geq 4\alpha/\textrm{inj}(M)$.
\item [iii.] We can simply set $\beta=1$.
\end{itemize}

\begin{theorem}[{Canonical neighborhood theorem, c.f. \cite[Thm. 1.22]{HK}}]\label{thm_canonical}
For every $\eps>0$ there exists $H_{\textrm{can}}<\infty$ such that if $\mathcal{K}$ is an $(\Balpha,\delta,\mathbb{H})$-flow with
$\delta$ small enough and $H_{\textrm{trig}}\gg H_{\textrm{neck}}\gg H_{\textrm{th}}\gg 1$, then every $(p,t)\in\partial\mathcal{K}$ with $H(p,t)\geq H_{\textrm{can}}$ is $\eps$-close\footnote{This is defined via the exponential map similarly as in Definition \ref{def_strong_neck}.} to either (a) an ancient $\alpha$-Andrews flow in $\mathbb{R}^3$ or (b) the evolution of a standard cap preceded by the evolution of a round cylinder $\bar{D}^2\times\mathbb{R}\subset \mathbb{R}^3$.
\end{theorem}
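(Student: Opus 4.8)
The plan is to follow the blow-up/contradiction scheme of \cite[Thm.~1.22]{HK} verbatim, now using the three structural theorems just established (Theorem \ref{thm_loc_curv}, Theorem \ref{thm_conv}, Theorem \ref{thm_glob_conv}) as the ambient replacements for their Euclidean counterparts. Concretely, suppose the statement fails for some $\eps>0$: then there is a sequence of $(\Balpha,\delta_j,\mathbb{H})$-flows $\mathcal{K}^j$ with $\delta_j\to 0$ (and the parameters $H_{\textrm{trig}}\gg H_{\textrm{neck}}\gg H_{\textrm{th}}\gg 1$ allowed to escape to infinity in the prescribed hierarchy) and points $(p_j,t_j)\in\partial\mathcal{K}^j$ with $H(p_j,t_j)\to\infty$, none of which is $\eps$-close to a model of type (a) or (b). The first step is to rescale by $H(p_j,t_j)$ and recenter via $\exp_{p_j}$ exactly as in Theorem \ref{thm_glob_conv}; since $H(p_j,t_j)\to\infty$, the rescaled ambient metrics converge smoothly to the flat metric on $\mathbb{R}^3$, so all the blow-up analysis takes place in Euclidean space.

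The second step is to run the alternative exactly as in \cite{HK}: either the rescaled flows are, after passing to a subsequence and discarding far-away components, uniformly smooth on a fixed backward parabolic ball on a uniform time interval — in which case Theorem \ref{thm_glob_conv} yields a global limit $\mathcal{K}^\infty$ which is a generalized $(\alpha,\delta)$-flow with convex time slices in $\mathbb{R}^3$, i.e.\ an ancient $\alpha$-Andrews flow, giving case (a) — or else the flow near $(p_j,t_j)$ is affected by a surgery at a controlled scale. In the latter case, because surgeries are performed only on strong $\delta_j$-necks (Definition \ref{def_strong_neck}) with $\delta_j\to 0$, the pre-surgery region converges to an exact round cylinder and the post-surgery region, by items (3$'$)--(4$'$) of the surgery definition, converges to a pair of opposing standard caps; continuing the flow forward from such a configuration, one uses the uniqueness/stability of the standard-cap evolution (the discussion of \cite[Sec.~3]{HK}, which applies literally since the models live in $\mathbb{R}^3$) to conclude that a neighborhood of $(p_j,t_j)$ is $\eps$-close to the evolution of a standard cap preceded by a round cylinder, giving case (b). Either way we contradict the assumption that $(p_j,t_j)$ is not $\eps$-close to (a) or (b), so a finite $H_{\textrm{can}}$ must exist.

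The main obstacle — and the only place genuinely new input is needed relative to \cite{HK} — is ensuring that the blow-up limits are honestly Euclidean, i.e.\ that all the estimates feeding the compactness argument (monotonicity-formula control, the local curvature estimate, the convexity estimate) survive passage to the ambient setting with uniformly vanishing error. This is precisely what Theorem \ref{thm_loc_curv} and Theorem \ref{thm_conv} supply: the former rests on Hamilton's version of the monotonicity formula in almost-Euclidean backgrounds \cite{Ham_mon_amb}, and the latter on the surgery-avoiding selection of counterexamples spelled out in its proof. A secondary technical point is bookkeeping the discarding of connected components in $B(p_j,\Lambda_j H^{-1}(p_j,t_j))$ for a suitable $\Lambda_j\to\infty$, so that the component through $p_j$ is the one that survives in the limit; this is handled exactly as in Theorem \ref{thm_glob_conv}. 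Once these ambient estimates are in place, the remainder of the argument is a line-by-line transcription of \cite[proof of Thm.~1.22]{HK}.
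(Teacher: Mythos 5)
Your proposal is correct and follows essentially the same route as the paper: a contradiction sequence of flows with high-curvature points, rescaling via the exponential map so the blow-up takes place in Euclidean space, the global convergence theorem (Theorem \ref{thm_glob_conv}, resting on Theorems \ref{thm_loc_curv} and \ref{thm_conv}) to extract a generalized limit flow in $\mathbb{R}^3$, and then the dichotomy giving an ancient $\alpha$-Andrews flow or a standard cap preceded by a round cylinder, with the surgery case deferred to the Euclidean analysis of \cite{HK}. The only (cosmetic) difference is that you split cases along the sequence (surgery at a controlled nearby scale or not) while the paper applies Theorem \ref{thm_glob_conv} uniformly and distinguishes whether the limit contains surgeries; both versions handle the discarded far-away components in the same way.
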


\begin{proof}
If not, then there is a sequence $\mathcal{K}^j$ of $(\Balpha,\delta_j,\mathbb{H}_j)$-flows ($\delta_j\leq \bar{\delta}$) with $H_{\textrm{trig}}^j/ H_{\textrm{neck}}^j,H_{\textrm{neck}}^j/ H_{\textrm{th}}^j\to \infty$ and a sequence of points $(p_j,t_j)\in\partial\mathcal{K}^j$ around which the flow (modulo rescaling) is not $\eps$-close to any model from (a) or (b). However, by Theorem \ref{thm_glob_conv} after passing to a subsequence the rescaled flows $\hat{\mathcal{K}}^j$ converge smoothly and globally to a limit $\mathcal{K}^\infty$ which is a a generalized $(\alpha,\bar{\delta})$-flow in $\mathbb{R}^3$. If $\mathcal{K}^\infty$ doesn't contain surgeries, then it is an ancient $\alpha$-Andrews flow in $\mathbb{R}^3$. If $\mathcal{K}^\infty$ contains surgeries,  then arguing similarly as in \cite[Sec. 4.1]{HK} we see that $\mathcal{K}^\infty$ must be the evolution of a standard cap preceded by the evolution of a round cylinder in $\mathbb{R}^3$. In either case, for $j$ large enough this gives a contradiction with not being $\eps$-close to one of the models.\footnote{Other potential connected components clear out similarly as in \cite[Cor. 2.15]{HK_meanconvex}.}
\end{proof}

\begin{theorem}[{Existence theorem, c.f. \cite[Thm. 2.21]{HK}}]\label{thm_existence}
Let $K\subset M^3$ be a mean convex domain. Then for every $T<\infty$, choosing $\delta$ small enough and $H_{\textrm{trig}}\gg H_{\textrm{neck}}\gg H_{\textrm{th}}\gg 1$, there exists an $(\Balpha,\delta,\mathbb{H})$-flow $\{K_t\}_{t\in [0,T]}$ with initial condition $K_0=K$.
\end{theorem}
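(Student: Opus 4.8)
The plan is to construct $\{K_t\}_{t\in[0,T]}$ by the usual alternation of smooth mean curvature flow and surgery, following the proof of \cite[Thm. 2.21]{HK} (see also \cite[Thm. 1.21]{HK}), with each Euclidean input replaced by its ambient analogue proven above: the local curvature estimate (Theorem \ref{thm_loc_curv}), the convexity estimate (Theorem \ref{thm_conv}), the global convergence theorem (Theorem \ref{thm_glob_conv}) and the canonical neighborhood theorem (Theorem \ref{thm_canonical}). Before starting I would fix $\delta$ small and $H_{\textrm{trig}}\gg H_{\textrm{neck}}\gg H_{\textrm{th}}\gg 1$ as those theorems demand, let $\Lambda=\Lambda(M)<\infty$ satisfy $\textrm{Ric}_M(\nu,\nu)\geq-\Lambda$, and then choose the noncollapsing parameter $\alpha=\alpha(K,T,M)>0$ small enough that (i) $K$ admits interior and exterior balls of radius $\alpha/H$ at each boundary point, which holds for small $\alpha$ since $K$ is compact with smoothly mean convex boundary, and (ii) $4\alpha/\textrm{inj}(M)\leq(\min_{\partial K}H)\,e^{-\Lambda T}$; by Definition \ref{def_smooth_alpha} this makes $K$ a smooth $\alpha$-Andrews domain, with room to spare over the interval $[0,T]$.

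\textbf{Smooth pieces.} Run smooth mean curvature flow from $K$. Short-time existence and smoothness are classical since $M$ is closed and $\partial K$ is smooth, and mean convexity is preserved by the maximum principle. The interior/exterior $\alpha$-ball condition is preserved under smooth mean convex flow in a general closed three-manifold by the noncollapsing estimate of Brendle--Huisken \cite{BrendleHuisken}. Moreover $\partial_tH=\Delta H+(|A|^2+\textrm{Ric}(\nu,\nu))H\geq\Delta H-\Lambda H$, so the maximum principle gives $\min_{\partial K_t}H\geq(\min_{\partial K}H)\,e^{-\Lambda t}$, and by (ii) the constraint $\inf H\geq4\alpha/\textrm{inj}(M)$ persists as long as the accumulated smooth-flow time does not exceed $T$. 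Continue the smooth flow until the first time $t_1$ at which $\max_{\partial K_t}H=H_{\textrm{trig}}$; if no such time occurs before $T$, the smooth flow is already the desired flow.

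\textbf{Surgery.} At such a time $t_i$ I would perform surgery exactly as in \cite[Sec. 2]{HK}: by Theorem \ref{thm_canonical} every boundary point with $H\geq H_{\textrm{can}}$ lies in an $\eps$-neck or $\eps$-cap, so $\{H\geq H_{\textrm{neck}}\}$ decomposes into chains of strong $\delta$-necks (Definition \ref{def_strong_neck}) capped by $\eps$-caps together with whole components covered by canonical neighborhoods; one selects a minimal disjoint collection of strong $\delta$-necks separating the thick part $\{H\leq H_{\textrm{th}}\}$ from the trigger part $\{H=H_{\textrm{trig}}\}$, replaces each by a pair of standard caps in the sense of the modified surgery definition (items (3$'$)--(4$'$)), and discards the components entirely covered by canonical neighborhoods. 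The side conditions $4s/\delta\leq\textrm{inj}(M)$ and the ball-containment hypotheses of (3$'$)--(4$'$) hold automatically, since the surgery necks live at the tiny scale $s\sim H_{\textrm{neck}}^{-1}\ll\textrm{inj}(M)$. As in \cite{HK} one checks that the post-surgery domain is again smooth, compact and mean convex, is again $\alpha$-Andrews (the standard cap is built to be uniformly $\alpha$-noncollapsed), and has $\inf H$ no smaller than before the surgery (the caps carry $H\sim H_{\textrm{neck}}$, the rest of $\partial K$ is unchanged, and discarding can only raise $\min H$); thus the hypotheses for restarting the smooth flow are restored, and we iterate.

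\textbf{Termination and main difficulty.} It remains to see that finitely many steps reach $T$: at each surgery time $\partial K_t$ is compact, so there are only finitely many disjoint necks and finitely many discarded components, each neck surgery removes enclosed volume $\gtrsim\Gamma H_{\textrm{neck}}^{-3}$, and each discarded component (having $H\geq H_{\textrm{th}}$, hence containing a ball of radius $\geq\alpha/H_{\textrm{trig}}$ by noncollapsing) encloses volume $\gtrsim(\alpha/H_{\textrm{trig}})^3$; since $\tfrac{d}{dt}\textrm{Vol}(K_t)=-\int_{\partial K_t}H<0$ under smooth flow, the enclosed volume strictly decreases and is bounded by $\textrm{Vol}(K)$, so only finitely many surgery/discarding events occur, and concatenating the finitely many smooth pieces and surgeries gives an $(\Balpha,\delta,\mathbb{H})$-flow on all of $[0,T]$. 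I expect the main obstacle to be point (ii): unlike in Euclidean space $\min H$ can genuinely decay when $\textrm{Ric}(\nu,\nu)<0$, so one must either track this decay quantitatively as above via a $T$-dependent choice of $\alpha$, or else observe that $\inf H\geq4\alpha/\textrm{inj}(M)$ is only needed to render the local estimates and the canonical neighborhood theorem applicable with constants uniform on $[0,T]$. A secondary point, handled verbatim as in \cite[Sec. 2]{HK} once the ambient estimates are in place, is that the post-surgery domains must stay $\alpha$-Andrews with the \emph{same} $\alpha$ through arbitrarily many surgeries, which is exactly why the standard caps are required to be uniformly noncollapsed.
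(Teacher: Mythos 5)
Your proposal is correct and takes essentially the same route as the paper: a priori exponential-in-time control of $\inf H$ from the evolution equation together with a $T$-dependent choice of $\alpha$ satisfying $\inf H\geq 4\alpha/\textrm{inj}(M)$ on $[0,T]$, surgery performed at scales tiny compared to $\textrm{inj}(M)$ (the paper constructs it by conjugating the Euclidean surgery with the exponential map), and then the iteration and finiteness argument of \cite[Sec. 4.2]{HK} driven by the ambient canonical neighborhood theorem (Theorem \ref{thm_canonical}). The one imprecision is your claim that the $\alpha$-Andrews condition is \emph{preserved} under smooth mean convex flow in a closed three-manifold: in a curved ambient space it only deteriorates at a controlled (exponential) rate by Brendle's estimate \cite{Brendle_noncoll_mfd}, which the paper absorbs, together with the uniform noncollapsing of the surgery caps, by choosing $\alpha$ small enough that every $(\Balpha,\delta,\mathbb{H})$-flow on $[0,T]$ is in fact $2\alpha$-noncollapsed --- the same $T$-dependent device you already use for $\inf H$, so this is a fixable slip rather than a genuine gap.
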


\begin{proof}Let us start with some basic a priori estimates for $(\Balpha,\delta,\mathbb{H})$-flows in general ambient manifolds.
By compactness, the initial domain $K$ satisfies $H\geq H_0>0$ for some $H_0>0$ (and also $H\leq \gamma$ for some $\gamma<\infty$) and is $\alpha_0$-noncollapsed for some $\alpha_0>0$.

By the evolution equation for the mean curvature \cite[Cor. 3.5]{Hui86},
\begin{equation}
\partial_t H = \Delta H + |A|^2 H + \textrm{Rc}(\nu,\nu)H,
\end{equation}
along smooth mean curvature flow we get the lower bound
\begin{equation}
H\geq H_0e^{-\rho t},
\end{equation}
where $\rho$ is a bound for the Ricci curvature of the ambient manifold. Since the minimum of the mean curvature doesn't decrease under surgeries, we in fact get the a priori estimate
\begin{equation}
\inf H\geq H_0e^{-\rho T},
\end{equation}
for any $(\Balpha,\delta,\mathbb{H})$-flow $\{K_t\}_{t\in [0,T']}$ which is defined on an interval $[0,T']$ with $T'\leq T$ and has initial condition $K_0=K$.

Similarly, by an estimate of Brendle \cite{Brendle_noncoll_mfd} the noncollapsing factor deteriorates at most exponentially in time. Using this and the fact that the surgery caps have a controlled noncollapsing constant, we see that for small enough $\alpha>0$ every $(\Balpha,\delta,\mathbb{H})$-flow $\{K_t\}_{t\in [0,T']}$ which is defined on an interval $[0,T']$ with $T'\leq T$ and has initial condition $K_0=K$ is in fact an $(\Balpha',\delta,\mathbb{H})$-flow with $\alpha'=2\alpha$. After possibly decreasing $\alpha$ further, we can assume in addition that $8\alpha\leq {\textrm{inj}(M)}H_0e^{-\rho T}$.

As a final modification, let us explain how to replace necks by caps in general ambient manifolds. To do this, we simply map to Euclidean space via the exponential map, replace the neck in Euclidean space by standard caps as in \cite[Prop. 3.10]{HK} and map back to the manifold via the logarithm map. This obviously satisfies properties (1) and (2) of \cite[Def. 2.4]{HK}. Moreover, since the necks are at smaller and smaller scales as $\delta$ decreases, it is also clear that properties (3') and (4') are satisfied for some function $\delta'(x)$ that tends to zero as $x$ tends to zero.

Using the above, and Theorem \ref{thm_canonical} instead of \cite[Thm. 1.22]{HK}, we can now prove existence of $(\Balpha,\delta,\mathbb{H})$-flows in general ambient manifolds via the same continuity argument as in \cite[Sec. 4.2]{HK}.\footnote{Observe in particular that \cite[Claim 4.7]{HK} holds, since $H_{\textrm{neck}}^j\to \infty$ makes the ambient space look more and more Euclidean at the scales of the necks.}
\end{proof}

\section{Smooth foliations in general ambient manifolds}\label{sec_fol_amb}

The goal of this section is prove Theorem \ref{main_foliation_theorem}, which we restate here more precisely:

\begin{theorem}
Let $K\subset M^3$ be a smooth 3-disc with mean convex boundary. Then one of the following alternatives holds true:
\begin{enumerate}
\item There exists a stable embedded minimal $2$-sphere $\Sigma\subset \textrm{Int}(K)$.
\item There exists a smooth foliation $\{ \Sigma_t \}_{t\in [0,1]}$ of $K$ by mean convex embedded 2-spheres. More precisely:
\begin{itemize}
\item $\{\Sigma_t\}_{t\in [0,1)}$ is a smooth family of mean convex embedded 2-spheres.
\item $\cup_{t\in [0,1]}\Sigma_t = K$
\item $\Sigma_{t_1}\cap \Sigma_{t_2}=\emptyset$ whenever $t_1\neq t_2$.
\item $|\Sigma_{t_2}|<|\Sigma_{t_1}|$ whenever $t_2>t_1$.
\item $\Sigma_0=\partial K$
\item For $t\to 1$ the spheres $\Sigma_t$ converge in the Hausdorff sense to a limit $\Sigma_1$, and $\Sigma_1$ is a finite union of smooth arcs.
\end{itemize}
\end{enumerate}
\end{theorem}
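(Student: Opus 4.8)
The plan is to run mean curvature flow with surgery on $K$, using the ambient construction developed in Section~\ref{sec_surgery_ambient}, and to split into the two alternatives according to whether or not the (level set) flow of $\partial K$ becomes extinct in finite time. Before doing anything, I would record that the two alternatives are mutually exclusive. If there is a stable embedded minimal two-sphere $\Sigma\subset\textrm{Int}(K)$, then by the avoidance principle any weak mean curvature flow started at $\partial K$ stays in the closed region between $\partial K$ and $\Sigma$ for all time, so it neither becomes extinct nor sweeps out all of $K$; conversely, a foliation as in~(2) has strictly mean convex leaves, so by the strong maximum principle it cannot contain a closed minimal surface, and a closed minimal surface in $\textrm{Int}(K)$ disjoint from every leaf is impossible since the leaves (together with their Hausdorff limit $\Sigma_1$) exhaust $K$.

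The first case is that the level set flow of $\partial K$ does \emph{not} become extinct in finite time. Here I would argue exactly as in the proof of Lemma~\ref{mustintersect}: by White's structure theory for mean convex level set flows \cite{White_topology,White_size}, the flow converges as $t\to\infty$, with multiplicity one and smoothly, to an embedded stable minimal surface $\Sigma\subset\textrm{Int}(K)$. Since $\partial K$ is a two-sphere and the genus cannot increase along the flow, $\Sigma$ is a two-sphere, which is alternative~(1).

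The main case is that the flow becomes extinct in finite time; equivalently, for $\delta$ small the $(\Balpha,\delta,\mathbb H)$-flow of Theorem~\ref{thm_existence} becomes extinct in finite time, and hence undergoes only finitely many surgery and discarding times. Here I would re-run the entire monotone-isotopy argument from the proof of Theorem~\ref{thm_foliation_r3} essentially verbatim, the only changes being: (i) strong $\delta$-necks, standard-cap replacements, and $\alpha$-Andrews flows are now understood in the ambient sense of Definitions~\ref{def_smooth_alpha}--\ref{def_strong_neck} and of Theorem~\ref{thm_existence}; (ii) the ambient canonical neighborhood theorem (Theorem~\ref{thm_canonical}) replaces the Euclidean one, so that every discarded component is again either convex or a capped $\varepsilon$-tube, whence Proposition~\ref{prop_replacement2}, Corollary~\ref{prop_replacement1} and Claim~\ref{claim_inductionhyp} apply; and (iii) the explicit local constructions — standard caps, the gluing maps $\mathcal{G}_{r_s}$, the deformation of a capped $\varepsilon$-tube to a marble tree, the graphical interpolations through surgery necks, and the final contraction of marbles and strings to the skeleton — are carried out in normal coordinates around boundary points where the curvature scale is tiny, hence in regions that are as close to Euclidean as needed. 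Backwards induction on the (finitely many) surgery times, as in Section~\ref{sec_fol_eucl}, then yields a strictly monotone mean convex isotopy $\{K'_t\}$ deforming $K$ into a marble tree, and shrinking the marble and string radii to zero produces the foliation $\Sigma_t=\partial K'_t$ with all the listed properties: smoothness and embeddedness of the leaves for $t<1$, strict nesting hence disjointness and strictly decreasing area, $\Sigma_0=\partial K$, $\bigcup_t\Sigma_t=K$, and $\Sigma_1$ equal to the one-dimensional skeleton, a finite union of smooth arcs.

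The step I expect to be the main obstacle is precisely this transplantation of the \cite{BHH} isotopy machinery, which is driven throughout by explicit Euclidean local models, into a general ambient three-manifold. The crucial observation making it work — and the reason Section~\ref{sec_surgery_ambient} is needed — is that all surgeries, canonical neighborhoods, and interpolations occur at curvature scales tending to zero, where by Hamilton's almost-Euclidean monotonicity and by the local curvature estimate (Theorem~\ref{thm_loc_curv}), the convexity estimate (Theorem~\ref{thm_conv}) and the global convergence theorem (Theorem~\ref{thm_glob_conv}), the rescaled flow is uniformly controlled and the ambient geometry is uniformly close to flat. Granting these quantitative inputs, the constructions of \cite{BHH} and of Section~\ref{sec_fol_eucl} go through with only cosmetic modifications, and combined with the dichotomy above this completes the proof of Theorem~\ref{main_foliation_theorem}.
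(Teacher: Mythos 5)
Your proposal follows essentially the same route as the paper: the dichotomy according to finite-time extinction of the level set flow, White's structure and genus-monotonicity results \cite{White_size,White_topology} to produce the stable sphere in the non-extinct case, and in the extinct case transplanting the monotone isotopy construction of Theorem \ref{thm_foliation_r3} into the ambient setting via the surgery theory of Section \ref{sec_surgery_ambient}, using that the ambient geometry is nearly Euclidean at the surgery scales. The only point you gloss over, and which the paper makes explicit, is why extinction of the level set flow forces finite-time extinction of the surgery flow: one uses Ilmanen's observation that any flow with surgery is a set-theoretic subsolution and the level set flow is the maximal such, giving an a priori bound on the surgery flow's extinction time; this is a minor omission and does not affect the argument.
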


\begin{remark}
With some additional effort it is possible to produce a smooth foliation by mean convex embedded 2-spheres which has the additional property that it terminates in a round point $p$ (and thus in particular $\Sigma_1=\{p\}$), but this is not needed for our application.
\end{remark}

\begin{proof}
Let $K\subset M^3$ be a smooth 3-disc with mean convex boundary.

Consider the level set flow $\{K_t\}_{t\geq 0}$ starting at $K_0=K$. Let 
\begin{equation}
K_\infty:=\bigcap_{t\geq 0} K_t.
\end{equation}

Consider first the case $K_\infty\neq\emptyset$. By a result of White \cite{White_size} the boundary of $K_\infty$ is a union of finitely many stable minimal surfaces. Since the genus is monotone under level set flow \cite{White_topology},\footnote{Since $K_t$ is smooth for a.e. time the genus is well defined for $a.e.$ time.} these minimal surfaces must be spheres, i.e. in the case $K_\infty\neq\emptyset$ we can find a stable minimal 2-sphere in the interior of $K$.

Assume from now on that $K_\infty=\emptyset$, i.e. that the level set flow becomes extinct in finite time. Observe first that any mean curvature flow with surgery starting at $K$ gives a family of closed sets that is a set theoretic subsolution in the terminology of Ilmanen \cite{Ilmanen}. Since the level set flow is the maximal set theoretic subsolution, we get an a priori bound $T<\infty$ for the extinction time of any mean curvature flow with surgery starting at $K$.

We can now choose the surgery parameters suitably so that in particular both the existence theorem (Theorem \ref{thm_existence}) and the canonical neighborhood theorem (Theorem \ref{thm_canonical}) apply. By choosing the curvature parameters large enough we can ensure that at the neck scale $H_{\textrm{neck}}^{-1}$ the ambient space looks as close as we want to Euclidean space. The gluing map $\mathcal{G}_{r_s}$ can then simply be defined by mapping to Euclidean space with the exponential map, using the gluing map from \cite[Thm. 4.1]{BHH} in Euclidean space, and mapping back with the logarithm. Since the ambient space looks as close as we want to Euclidean space at the neck scale the construction of the foliation from Section \ref{sec_fol_eucl} applies.

The only little point that hasn't been explained yet is that $|\Sigma_{t_2}|<|\Sigma_{t_1}|$ whenever $t_2>t_1$, but this follows easily from the first variation formula given that the foliation is mean convex.
\end{proof}

\section{Ellipsoids and Yau's question}\label{sec_ell}

In this section, we illustrate our results in detail in the special case of ellipsoids, and answer Yau's question.  We then discuss the relationship with the multiplicity one conjecture and the equidistribution of width conjecture.

Since it is difficult to tell whether the ellipsoids are bumpy, we need to generalize Theorem \ref{main} to the case of non-generic metrics:
\begin{theorem}\label{nongeneric}
Let $M$ be a three-manifold diffeomorphic to a three-sphere not containing any stable minimal two-spheres.  Suppose $M$ admits an optimal foliation $\{\Lambda_t\}_{t\in [-1,1]}$ so that $\Sigma_1:=\Lambda_0$ is an index $1$ minimal two-sphere realizing the $1$-width $\omega_1(M)$.  Then either $M$ contains infinitely many minimal two-spheres of area $|\Sigma_1|$, or else $M$ contains a minimal two-sphere $\Sigma_2$ with area $\omega_2(M)=|\Sigma_2|$ satisfying:
\begin{equation}
|\Sigma_1|<|\Sigma_2|<2|\Sigma_1|.
\end{equation}
\end{theorem}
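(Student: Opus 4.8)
The plan is to re-run the proof of Theorem~\ref{main} in the case where $M$ contains no stable embedded minimal two-sphere, replacing the single point where bumpiness was invoked --- namely to exclude the coincidence $\omega_2(M)=\omega_1(M)$ --- by the corresponding alternative in the conclusion. First I would apply Theorem~\ref{twoparam} to the given optimal foliation $\{\Lambda_t\}_{t\in[-1,1]}$, whose top slice $\Lambda_0=\Sigma_1$ realizes $\omega_1(M)$; since $M$ has no stable embedded minimal two-spheres, this produces a two-parameter sweep-out $\{\Gamma_x\}_{x\in\mathbb{RP}^2}$ detecting $\alpha^2$ with $\sup_x|\Gamma_x|<2|\Sigma_1|$, and hence
\begin{equation}
\omega_2(M)<2\omega_1(M)=2|\Sigma_1|.
\end{equation}

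Next I would run the Min-Max Theorem~\ref{minmax} on the families detecting $\alpha$ and $\alpha^2$, producing stationary integral varifolds $V_1$ and $V_2$, each a sum of pairwise disjoint embedded minimal two-spheres with positive integer multiplicities and with $|V_i|=\omega_i(M)$. By Lemma~\ref{mustintersect} no two embedded minimal two-spheres in $M$ are disjoint, so $V_1$ and $V_2$ are each a single connected embedded minimal two-sphere, say $V_1=k_1\Sigma_1'$ and $V_2=k_2\Sigma_2$. Lemma~\ref{lemma_width} applies --- its hypothesis is only the absence of stable embedded minimal two-spheres, and its proof goes through Theorems~\ref{optimal} and~\ref{main_foliation_theorem}, neither of which needs a bumpy metric --- giving $\gamma(M)=\omega_1(M)$; hence $\Sigma_1$ is a minimal two-sphere of least area, $k_1=1$ (otherwise $\Sigma_1'$ would be a minimal two-sphere of area less than $\gamma(M)$), and $|\Sigma_1|\le|\Sigma_2|$. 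Combining this with the previous display gives
\begin{equation}
k_2|\Sigma_2|=\omega_2(M)<2|\Sigma_1|\le 2|\Sigma_2|,
\end{equation}
which forces $k_2=1$. Thus $\omega_2(M)=|\Sigma_2|$ and $|\Sigma_1|\le|\Sigma_2|<2|\Sigma_1|$.

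Finally I would distinguish two cases. If $\omega_2(M)>\omega_1(M)$, then $|\Sigma_1|<|\Sigma_2|<2|\Sigma_1|$, so $\Sigma_2$ is the asserted second minimal two-sphere (and the index bound \eqref{indexbound} moreover gives $\mathrm{index}(\Sigma_2)\le 2$). If instead $\omega_2(M)=\omega_1(M)$, then Theorem~\ref{ls} applies --- its proof rests only on the Lusternik-Schnirelmann vanishing lemma and the Almgren-Pitts pull-tight argument, not on bumpiness --- and yields infinitely many embedded minimal two-spheres of area $\omega_1(M)=|\Sigma_1|$, which is the first alternative in the conclusion.

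The work, and the only genuine obstacle, lies in the bookkeeping of the previous two paragraphs: one must check that in the proof of Theorem~\ref{main} (no-stable-sphere case) bumpiness entered \emph{solely} to rule out $\omega_2(M)=\omega_1(M)$, so that dropping it merely reinstates that case as the ``infinitely many'' alternative; and one must confirm that every ingredient --- Theorem~\ref{twoparam}, Lemma~\ref{mustintersect}, Lemma~\ref{lemma_width} (hence Theorems~\ref{optimal} and~\ref{main_foliation_theorem}), the Min-Max Theorem~\ref{minmax} with its index bound, and Theorem~\ref{ls} --- remains valid for arbitrary, not necessarily generic, metrics. One should also reconcile the minimal two-sphere underlying $V_1$ with the prescribed $\Sigma_1=\Lambda_0$: both have area $\omega_1(M)$ and, by the argument above, multiplicity one, which is all that is used. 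Everything else is a routine transcription of the proof of Theorem~\ref{main}.
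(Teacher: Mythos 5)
Your proposal is correct and is essentially the paper's own argument: the paper simply states that Theorem~\ref{nongeneric} ``follows immediately from considering the proof of Theorem~\ref{main},'' and your write-up is exactly that re-run, with bumpiness used only to exclude $\omega_2(M)=\omega_1(M)$ and that case instead absorbed into the ``infinitely many'' alternative via Theorem~\ref{ls}. Your bookkeeping (Theorem~\ref{twoparam} applied to the given foliation, Lemmas~\ref{mustintersect} and~\ref{lemma_width} to get connectedness and $k_1=1$, the area bound forcing $k_2=1$) matches the intended proof.
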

The proof of Theorem \ref{nongeneric} follows immediately from considering the proof of Theorem \ref{main}.

Let us know turn to the ellipsoids.  For two-dimensional ellipsoids with different major axis lengths, the three geodesics provided by Lusternik-Schnirelmann coincide with the intersections of the ellipsoid with the coordinate planes. Moreover, these three are the only three closed embedded geodesics. One dimension higher, there is a remarkably different phenomenon when the ellipsoid becomes very elongated, which we will now explain.

Recall from the introduction, given $a> b> c> d>0$, we consider the ellipsoid
\begin{equation}
E(a,b,c,d):= \left\{\frac{x_1^2}{a^2}+ \frac{x_2^2}{b^2}+\frac{x_3^2}{c^2}+\frac{x_4^2}{d^2}=1\right\}\subset\mathbb{R}^4.
\end{equation}
For each $i=1,2,3,4$ denote by
\begin{equation}
\Gamma_i:= E(a,b,c,d)\cap \{x_i=0\}.
\end{equation}
the minimal `planar' two-sphere.  Note that the areas of $\Gamma_i$ are increasing in $i$. From now on we fix $b,c,d$, and consider $E(a):=E(a,b,c,d)$.\\
We will now prove Theorem \ref{yau}, which we restate:
\begin{theorem}[Answer to Yau's Question (1987)]
For $a\gg b$, $E(a)$ contains an embedded non-planar minimal two-sphere.
\end{theorem}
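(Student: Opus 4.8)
The plan is to apply Theorem \ref{nongeneric} to the elongated ellipsoid $E(a)$ and show that the resulting second minimal sphere $\Sigma_2(a)$ cannot be one of the planar spheres $\Gamma_i$, by means of an area comparison. First I would verify that $E(a)$ satisfies the hypotheses of Theorem \ref{nongeneric}: namely, that for $a\gg b$ it contains no stable minimal two-sphere and admits an optimal foliation whose central leaf is an index-one min-max sphere realizing $\omega_1(E(a))$. The absence of stable spheres should follow from a curvature/area argument (an elongated ellipsoid has a long ``tube'' region but the positive curvature at the ends forces any minimal sphere to be unstable — alternatively one checks that a stable minimal sphere would have to be one of the $\Gamma_i$ and each $\Gamma_i$ is unstable, being a great-sphere-like barrier between two deep wells). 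Given no stable spheres, Theorem \ref{optimal} produces the optimal foliation and Lemma \ref{lemma_width} identifies the central leaf as the index-one width-realizing sphere. The bumpiness is replaced precisely by the ``infinitely many minimal spheres'' alternative in Theorem \ref{nongeneric}, which I would rule out at the end (or absorb into the statement).

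The heart of the argument is the following area estimate. I would show that as $a\to\infty$:
\begin{equation}
\omega_1(E(a)) \longrightarrow |\Gamma_1^\infty|, \qquad \omega_2(E(a)) \longrightarrow 2|\Gamma_1^\infty|,
\end{equation}
where $\Gamma_1^\infty$ is the limiting ``smallest planar sphere'' (the minimal sphere of the degenerate limit, essentially $E(\infty,b,c,d)$ which looks like an infinite cylinder capped off, whose tightest cross-section is the planar sphere through the $b,c,d$ axes). The point is that sweeping out $E(a)$ by cross-sectional spheres perpendicular to the long $x_1$-axis shows $\omega_1(E(a))\le |\Gamma_1|+o(1)$ — in fact the minimal sphere realizing the $1$-width is (close to) $\Gamma_1$ itself; and $\omega_2(E(a))< 2\omega_1(E(a))$ by Theorem \ref{twoparam}. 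On the other hand the areas of the other planar spheres satisfy $|\Gamma_2|,|\Gamma_3|,|\Gamma_4|\to\infty$ as $a\to\infty$ (they all contain the $x_1$-direction, hence have diameter $\sim a$ and area $\gtrsim a$). So for $a$ large, $|\Sigma_2(a)| = \omega_2(E(a)) < 2\omega_1(E(a))$ stays bounded, whereas $|\Gamma_2|,|\Gamma_3|,|\Gamma_4|$ are huge; and $|\Sigma_2(a)|>|\Sigma_1(a)|=\omega_1(E(a))=|\Gamma_1|$ strictly by Theorem \ref{nongeneric}, so $\Sigma_2(a)\ne\Gamma_1$ either. Hence $\Sigma_2(a)$ is a non-planar embedded minimal two-sphere.

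The main obstacle I anticipate is making the claim $\omega_1(E(a))=|\Gamma_1|$ (or at least $\le |\Gamma_1|+o(1)$ and $\Sigma_1(a)$ close to $\Gamma_1$) fully rigorous, together with the accompanying claim that $E(a)$ has no stable minimal sphere. For the upper bound on $\omega_1$ one sweeps out by the hyperplane sections $\{x_1 = s\}\cap E(a)$ for $s\in[-a,a]$, degenerating to points at the two ends; each such section is an ellipsoidal two-sphere inside $\{x_1=s\}$ with semi-axes $b\sqrt{1-s^2/a^2}$, etc., so its area is at most $|\Gamma_1|$ (attained at $s=0$), giving $\omega_1(E(a))\le|\Gamma_1|$ directly (with equality once one shows $\Gamma_1$ itself is the width-realizer, e.g. because any competitor sweep-out must have a slice separating the two ends, which by a calibration/monotonicity argument has area $\ge|\Gamma_1|-o(1)$). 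The lower bound and the no-stable-sphere claim are where one must invoke the specific geometry: I would argue that any minimal two-sphere in $E(a)$ other than a planar one would, after the degeneration $a\to\infty$, have to live in the cylindrical model and be non-compact or have unbounded area, a contradiction; and stability would be obstructed because the planar spheres $\Gamma_i$ sit at a ``mountain pass'' of the area functional (they separate $E(a)$ into two pieces each containing a deep well). Once these geometric inputs are in place, the comparison $|\Sigma_2(a)|<2\omega_1(E(a))=2|\Gamma_1|\ll|\Gamma_j|$ for $j\ge2$ finishes the proof, with the infinitely-many-spheres alternative of Theorem \ref{nongeneric} being harmless (it would only give us even more minimal spheres, all of area $|\Gamma_1|$, but then a convergence/Jacobi-field argument or a direct check that $E(a)$ has finitely many minimal spheres of small area rules it out, or one simply notes the conclusion ``$E(a)$ contains a non-planar minimal sphere'' already holds in that branch since the area-$|\Gamma_1|$ family cannot consist only of $\Gamma_1$).
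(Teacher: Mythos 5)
Your overall strategy is the same as the paper's: invoke Theorem \ref{nongeneric} for the elongated ellipsoid, get a second minimal sphere $\Sigma_2(a)$ with $|\Sigma_1(a)|<|\Sigma_2(a)|<2|\Sigma_1(a)|$, and rule out planarity because $|\Gamma_2(a)|\approx a\,G(c,d)\to\infty$ while $|\Gamma_1(a)|$ stays fixed (and the ``infinitely many spheres of area $\omega_1$'' branch is harmless since there are only four planar spheres). That core is correct and is exactly how the paper argues.

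The soft spot is the step you yourself flag as the main obstacle: you make the exact identity $\omega_1(E(a))=|\Gamma_1(a)|$ (equivalently, $\Sigma_1(a)=\Gamma_1(a)$) load-bearing, and your proposed justification would not deliver it. The sweep-out by the sections $\{x_1=s\}$ only gives $\omega_1\leq|\Gamma_1|$, and the ``calibration/monotonicity'' lower bound for a separating slice is both unsubstantiated (the spheres are null-homologous in $E(a)\simeq\mathbb{S}^3$, so there is no homological calibration argument available without further work) and, as you state it, only yields $\geq|\Gamma_1|-o(1)$, which is not enough for the strict inequality $|\Sigma_2(a)|>|\Gamma_1(a)|$ you use to exclude $\Sigma_2(a)=\Gamma_1(a)$. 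This entire difficulty evaporates with the paper's one-line reduction: by Lemma \ref{lemma_width} the width-realizing sphere $\Sigma_1(a)$ is a \emph{smallest-area} minimal two-sphere, so either it is non-planar (and the theorem is already proved) or it must equal $\Gamma_1(a)$, the smallest planar sphere; one may therefore assume $\Sigma_1(a)=\Gamma_1(a)$ without proving anything about the exact value of $\omega_1$. Similarly, your argument for the absence of stable spheres (``positive curvature at the ends,'' or ``a stable sphere would have to be one of the $\Gamma_i$'') is not a proof; the clean and standard fact is that $E(a)$ is a convex hypersurface of $\mathbb{R}^4$ and hence has positive Ricci curvature, which rules out two-sided stable minimal surfaces altogether and is also what makes Lemma \ref{lemma_width} and the construction of the optimal foliation (explicitly via the $\{x_1=s\}$ slices, or via Theorem \ref{optimal}) applicable. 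With these two repairs your argument coincides with the paper's proof.
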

\begin{proof}
Let $\Sigma_1(a)$ denote the $1$-width of $E(a)$.   We can assume $\Sigma_1(a)=\Gamma_1(a)$ (otherwise the theorem is proved).  
Thus from Lemma \ref{lemma_width} we know $\Sigma_1(a)$ is a smallest area embedded minimal two-sphere, has index $1$, and sits in an optimal foliation $\{\Lambda_t\}_{t\in [-1,1]}$ with $\Sigma_1(a)=\Lambda_0$ (that is easy to construct explicitly).   

By Theorem \ref{nongeneric} either $E(a)$ contains infinitely many two-spheres of area $\Sigma_1(a)$, or else a minimal two-sphere $\Sigma_2(a)$ satisfying
\begin{equation}\label{arearestriction}
|\Gamma_1(a)|<|\Sigma_2(a)|<2|\Gamma_1(a)|.
\end{equation}

In the first case where $E(a)$ contains infinitely minimal two-spheres of area $|\Sigma_1(a)|$, the theorem is proved as these cannot all be planar since when $a<b<c<d$, $E(a)$ contains only four planar minimal two-spheres. 

In the second case we obtain a minimal two-sphere $\Sigma_2(a)$ satisfying \eqref{arearestriction}.  If $\Sigma_2(a)$ were planar, then \eqref{arearestriction} would imply
\begin{equation}\label{areafinal}
|\Gamma_2(a)|<2|\Gamma_1(a)|.
\end{equation}
However, when $a\gg b$ (i.e. for very elongated ellipses) we can see directly that \eqref{areafinal} fails.  

The area of $\Gamma_1(a)$ is given by some function $F(b,c,d)$ obtained by elliptic integrals.  For $a$ much larger than the other parameters, $\Gamma_2(a)$ resembles a cylinder of length $a$ and cross section the ellipse in $\mathbb{R}^2$ given by $x_3^2/c^2+x_4^2/d^2=1$.  The length of this cross section is again a function $G(c,d)$ obtained by elliptic integrals. Thus, the area of $\Gamma_2(a)$ is approximately $a\cdot G(c,d)$.  Letting $a\rightarrow\infty$, and holding the other parameters fixed, we see that \eqref{areafinal} fails for $a$ large enough.

This completes the proof.
\end{proof}

Another reason to expect that when $a$ is large, the second width of $M$ is not realized by $\Gamma_2(a)$ comes from estimating the Morse index.  Combined with the index estimates of Proposition \ref{ellipsoidsprop}, the following proposition gives a second proof of Theorem \ref{yau}.

\begin{proposition}\label{index}
If  $a\gg b$, then the index of the second planar sphere $\Gamma_2(a)\subset E(a)$ is greater than $2$. 
\end{proposition}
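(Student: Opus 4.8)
The plan is to estimate the Morse index of the planar sphere $\Gamma_2(a)$ from below by constructing at least three linearly independent test functions with negative Rayleigh quotient for the Jacobi operator $L_{\Gamma_2(a)} = \Delta + |A|^2 + \mathrm{Rc}(\nu,\nu)$ on $\Gamma_2(a)$, for $a$ sufficiently large. The key geometric input is that, as $a\to\infty$ with $b,c,d$ fixed, the surface $\Gamma_2(a) = E(a)\cap\{x_2=0\}$ looks more and more like a long thin cylinder: away from the two caps near $x_1 = \pm a$, it is $C^\infty$-close (after rescaling the $x_1$-direction appropriately) to a piece of the Riemannian product $C \times [-\ell,\ell]$, where $C$ is the fixed planar curve $\{x_3^2/c^2 + x_4^2/d^2 = 1\}$ in $\mathbb{R}^3$ and $\ell \sim a$. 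On such a long cylindrical region the second fundamental form satisfies $|A|^2 \geq \kappa_0 > 0$ for some constant depending only on $b,c,d$ (the cross-sectional curvature is bounded below), so the potential $|A|^2 + \mathrm{Rc}(\nu,\nu)$ in $L$ is bounded below by a positive constant on the neck region once $a$ is large enough.

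First I would make precise the convergence statement: rescaling $x_1 = a s$, the induced metric on the neck portion of $\Gamma_2(a)$ converges in $C^\infty_{\mathrm{loc}}$ to the product metric on $C \times \mathbb{R}$, and the potential converges to the (positive, bounded below) potential coming from $C \times \mathbb{R}$. Then I would produce test functions: take $\phi_k(s) = \cos(k\pi s /(2\ell))$ or $\sin$ analogues, $k = 1, 2, 3$, supported on the neck, depending only on the long $x_1$-direction and constant in the cross-sectional variable. For these, $\int |\nabla \phi_k|^2 \sim (k\pi/2\ell)^2 \int \phi_k^2 = o(1)\cdot \int\phi_k^2$ as $\ell \to\infty$, while $\int (|A|^2 + \mathrm{Rc})\phi_k^2 \geq \kappa_0 \int \phi_k^2$; hence the Rayleigh quotient $Q(\phi_k) \leq (k\pi/2\ell)^2 - \kappa_0 < 0$ for $a$ large. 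Choosing the $\phi_k$ with disjoint (or at least linearly-independent) supports along the neck — e.g. three bump functions concentrated on three disjoint subintervals of $[-\ell,\ell]$, each long enough that $|\nabla\phi_k|^2$ is negligible — gives three $L^2$-orthogonal functions, each with $Q < 0$. This yields $\mathrm{index}(\Gamma_2(a)) \geq 3 > 2$.

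The main obstacle I anticipate is the bookkeeping needed to make the cylindrical approximation uniform and to verify the lower bound $|A|^2 \geq \kappa_0$ with a constant independent of $a$: one must check that the caps near $x_1 = \pm a$ (where $\Gamma_2(a)$ is not cylindrical) do not spoil the estimate — but this is handled simply by supporting all test functions in the middle portion of the neck, which has length $\to\infty$, so there is ample room. A secondary technical point is ensuring the cross-sectional factor of $\Gamma_2(a)$ really is (close to) the curve $C$ and that its geodesic curvature — which controls $|A|^2$ — is bounded below away from zero; this follows because $C$ is a fixed smooth strictly convex closed curve, so its curvature has a positive minimum depending only on $c,d$. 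Everything else is the standard Courant-type argument (disjointly supported negative-energy test functions give a lower bound on the number of negative eigenvalues), so once the geometric convergence and the uniform potential bound are in place the conclusion is immediate.
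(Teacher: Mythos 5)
Your overall scheme — disjointly supported test functions depending only on the long coordinate $x_1$, long plateaus so that the gradient term is negligible against a uniformly positive potential, and the Courant-type count of negative directions — is exactly the paper's argument. However, the step on which everything hinges is justified incorrectly, and as stated it is false. The second fundamental form $A$ that enters the Jacobi operator of $\Gamma_2(a)$ is the one of $\Gamma_2(a)$ as a hypersurface of the Riemannian three-manifold $E(a)$, and this vanishes identically: $\Gamma_2(a)=E(a)\cap\{x_2=0\}$ is the fixed-point set of the reflection isometry $x_2\mapsto -x_2$ of $E(a)$, hence totally geodesic, so $|A|^2\equiv 0$ for every $a$. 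The curvature of the cross-sectional ellipse controls the second fundamental form of $\Gamma_2(a)$ viewed in Euclidean space, which is not the object in the stability operator; if instead you work in flat space the Ricci term you wrote disappears and the cylinder over a convex curve is not even minimal there. So your claimed bound $|A|^2\geq \kappa_0>0$, and the assertion that the geodesic curvature of $C$ ``controls $|A|^2$'', cannot be the source of negativity.

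The conclusion you want is nevertheless true, but for a different reason, and this is precisely how the paper phrases it: one bounds the \emph{sum} $|A|^2+\mathrm{Ric}_{E(a)}(\nu,\nu)$ from below by some $\mu>0$ on a fixed compact portion of the neck, and here the entire contribution comes from the ambient Ricci term. Indeed $E(a)$ converges locally smoothly to the product $\mathbb{R}\times P$, where $P$ is the two-dimensional ellipsoid $\{x_2^2/b^2+x_3^2/c^2+x_4^2/d^2=1\}$, the limit of $\Gamma_2(a)$ is the totally geodesic surface $\mathbb{R}\times F$ with $F=P\cap\{x_2=0\}$, and along it $\mathrm{Ric}(\nu,\nu)=K_P>0$ since $\nu$ is tangent to $P$. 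With that replacement your test-function computation goes through verbatim (and, as in the paper, it suffices to support the three functions on plateaus of a fixed large length $N>1/\mu$ in a fixed compact region, rather than on intervals growing with $a$, which spares you any uniformity issue over the whole neck). That the distinction between $|A|^2$ and the ambient curvature term is not pedantic is illustrated by the paper's remark on two-dimensional ellipsoids: there the analogous ``cross-sectional curvature'' reasoning would also predict many negative directions for the second planar geodesic, yet the relevant potential is the ambient Gauss curvature, which vanishes on the limiting flat cylinder, and the index does not blow up.
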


\begin{proof}
We fix $b$, $c$ and $d$ and consider $\Gamma_2(a)=\Gamma_2(a,b,c,d)$ as a function of $a$.
Explicitly, we have:
\begin{equation}\label{e}
\Gamma_2(a)=  \left\{\frac{x_1^2}{a^2}+ \frac{x_3^2}{c^2}+\frac{x_4^2}{d^2}=1\right\}\subset\mathbb{R}^4.
\end{equation}
Note that $\Gamma_2(a)$ converges as $a\rightarrow \infty$ to $\mathbb{R}\times F$, where
\begin{equation}
F=\left\{\frac{x_3^2}{c^2}+\frac{x_4^2}{d^2}=1\right\}\subset\mathbb{R}^2.
\end{equation}
Recall that the index of $\Gamma_2(a)$ is the maximal dimension of a subspace on which $L:=L_{\Gamma_2(a)}=-\Delta-|A|^2-\textrm{Ric}(\nu,\nu)$ is negative definite.  To show that the index is larger than 2 for $a$ large enough, we will find three functions $\phi_1$, $\phi_2$ and $\phi_3$ on $\Gamma_2(a)$ with disjoint support so that for $i=1,2,3$ we have
\begin{equation}\label{neg}
\int_{\Gamma_2(a)} \phi_i L\phi_i < 0.
\end{equation}
To this end, from \eqref{e} and the convergence $\Gamma_2(a)\rightarrow \mathbb{R}\times F$ we infer that there exists $\mu>0$ such that 
\begin{equation}\label{eq_riccilower}
|A|^2+\textrm{Ric}(\nu,\nu)>\mu \mbox{ as } a\rightarrow\infty \mbox{ on any compact subset of } \mathbb{R}^4.  
\end{equation}
\noindent
Fix $N> \max(\frac{1}{\mu},2)$. For $A,B\in \mathbb{R}$ with $A<B$ let us define the following linear cutoff function on $\mathbb{R}^4$: 

\begin{equation*}
    \phi_{A,B}(x_1,x_2,x_3,x_4) = \begin{cases}
 		0              & x_1\leq A-1 \\   
    
                x_1+1-A               & A-1\leq x_1\leq A \\
                	1				& A\leq x_1\leq B	\\
             -x_1+B+1                 & B\leq x_1\leq B+1\\
           0     & x_1>B+1.\
           \end{cases}
\end{equation*}

Let $\phi_{A,B}$ also refer to the restriction of $\phi_{A,B}$ to $\Gamma_2(a)$.   We will show that  taking $a$ large enough, the functions
\begin{equation}
\phi_1:=\phi_{-4N,-2N},\quad \phi_2:=\phi_{-N,N},\quad \phi_3:= \phi_{2N,4N}
\end{equation}
defined on $\Gamma_2(a)$ satisfy \eqref{neg}. Note also that the functions $\phi_1,\phi_2,\phi_3$ have disjoint support. Let us compute for $\phi_2$ (the analogous computation works for $\phi_1$ and $\phi_3$):
\begin{align}\label{comp_ind}
\int_{\Gamma_2(a)} \phi_2 L\phi_2 &= \int_{\Gamma_2(a)} |\nabla\phi_2|^2- \int_{\Gamma_2(a)}(|A|^2+\textrm{Ric}(\nu,\nu))\phi_2^2\nonumber\\
&\leq |\Gamma_2(a)\cap \{N\leq |x_1|\leq N+1\}|\nonumber\\
&\qquad\qquad-\int_{\Gamma_2(a)\cap \{|x_1|\leq N\} }(|A|^2+\textrm{Ric}(\nu,\nu)).
\end{align}
As $a\rightarrow\infty$ we have
\begin{equation} \label{1}
 |\Gamma_2(a)\cap \{N\leq |x_1|\leq N+1\}|\rightarrow 2|F|,
\end{equation}
and
\begin{equation} \label{3}
|\Gamma_2(a)\cap \{|x_1|\leq N\} |\rightarrow 2N|F|.
\end{equation}
Using \eqref{eq_riccilower}, \eqref{comp_ind}, \eqref{1}, \eqref{3} and the choice of $N$ we conclude that
\begin{equation}
\limsup_{a\rightarrow\infty}\int_{\Gamma_2(a)} \phi_2 L\phi_2 \leq 2|F|-2N\mu |F| <0.
\end{equation}

This completes the proof.
\end{proof}

We obtain the following corollary:
\begin{corollary}\label{moduli}
For some critical value of $a$, $E(a)$ contains an embedded minimal two-sphere with a non-trivial Jacobi field that does not arise from deformation of the minimal surface by ambient isometry. 
\end{corollary}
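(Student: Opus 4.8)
The plan is to prove the corollary by a bifurcation argument, using the two index estimates at our disposal together with the continuity of the eigenvalues of the Jacobi operator. First I would fix $b>c>d$ and, for $a>b$, regard the planar sphere $\Gamma_2(a)\subset E(a)$ of \eqref{e} as a real-analytic family of embedded minimal two-spheres --- each $\Gamma_2(a)$ is in fact totally geodesic, being the fixed-point set of the ambient reflection $x_2\mapsto-x_2$ of $E(a)$. Pulling the Jacobi operators $L_{\Gamma_2(a)}=-\Delta-|A|^2-\mathrm{Ric}(\nu,\nu)$ back to a fixed $\mathbb{S}^2$ by a smooth family of parametrizations, I obtain a continuous family of Schr\"odinger operators on a fixed closed surface, so their eigenvalues $\lambda_1(a)\le\lambda_2(a)\le\cdots$ vary continuously with $a$.

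Next I would feed in the index estimates. The index estimates for nearly-round ellipsoids (Proposition \ref{ellipsoidsprop}) give in particular $\mathrm{index}(\Gamma_2(a))\le 2$, hence $\lambda_3(a)\ge 0$, for $a$ sufficiently close to $b$; Proposition \ref{index} gives $\mathrm{index}(\Gamma_2(a))>2$, hence $\lambda_3(a)<0$, for $a$ large. By the intermediate value theorem applied to the continuous function $a\mapsto\lambda_3(a)$ there is a value $a^\ast>b$ with $\lambda_3(a^\ast)=0$; an $L^2$-normalized eigenfunction is then a nontrivial Jacobi field $\phi$ on $\Sigma:=\Gamma_2(a^\ast)$. (One may take $a^\ast$ to be the infimum of the $a$ with $\mathrm{index}(\Gamma_2(a))>2$, which makes the word ``critical'' precise.)

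Finally I would check that $\phi$ is not of isometric origin. Since $a^\ast>b>c>d$, the four semi-axes of $E(a^\ast)$ are pairwise distinct, so $\mathrm{Isom}(E(a^\ast))$ is the finite group generated by the four coordinate reflections; in particular $E(a^\ast)$ admits no nonzero Killing field. A Jacobi field coming from a one-parameter family $\{\psi_s\}$ of ambient isometries with $\psi_0=\mathrm{id}$ is the normal component $\langle V,\nu\rangle|_\Sigma$ of the Killing field $V=\frac{d}{ds}\big|_{s=0}\psi_s$; since the only Killing field on $E(a^\ast)$ is zero, the only such Jacobi field is zero, whereas $\phi\neq 0$. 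Hence $\Sigma=\Gamma_2(a^\ast)$ is the required embedded minimal two-sphere. The substantive input here is entirely contained in Propositions \ref{index} and \ref{ellipsoidsprop}; the rest is a soft continuity argument, and I expect the only points needing care to be the (standard) continuity of $a\mapsto\lambda_3(a)$ for a smooth family of elliptic operators on a fixed closed surface, and the (automatic) fact that the bifurcation parameter $a^\ast$ corresponds to an ellipsoid with all axes distinct, so that no Jacobi field there can have isometric origin.
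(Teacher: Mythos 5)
Your overall strategy --- follow the planar family $\Gamma_2(a)$, detect a change of Morse index as $a$ varies, conclude by continuity of eigenvalues that some critical $a^\ast$ carries a zero eigenvalue of the Jacobi operator, and rule out an isometric origin because an ellipsoid with four distinct semi-axes has only the finite reflection group as isometries (hence no nonzero Killing fields) --- is exactly the mechanism behind Corollary \ref{moduli}, and the last two steps of your argument are fine. The gap is in the input you use at the low end of the parameter range. Proposition \ref{ellipsoidsprop} does not say that $\mathrm{index}(\Gamma_2(a))\le 2$ for $a$ close to $b$: it is a statement about the min-max spheres $\Sigma_1(a)$ and $\Sigma_2(a)$ realizing the widths, it is proved only for $a$ \emph{large}, and it relies on announced work of Marques--Neves; nothing in it controls the index of the planar sphere $\Gamma_2(a)$ near $a=b$. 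Moreover the bound you want is not obviously true for arbitrary fixed $b>c>d$ (for instance, when $b\gg c$ the sphere $\Gamma_2(a)$ is already very elongated at $a\approx b$, and determining its index there requires a genuine computation); it is plausible in the nearly round regime, but that is an extra perturbation argument which you do not supply.

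Fortunately no low-end index bound is needed. Each $\Gamma_2(a)$ is a closed minimal surface, so its index is finite, while the remark following Corollary \ref{moduli} (i.e.\ the proof of Proposition \ref{index}, run with more disjointly supported cutoff functions) shows that $\mathrm{index}(\Gamma_2(a))\to\infty$ as $a\to\infty$. Hence the integer-valued function $a\mapsto\mathrm{index}(\Gamma_2(a))$ is non-constant on $(b,\infty)$. Combining this with your (correct) observation that the eigenvalues $\lambda_k(a)$ of the pulled-back Jacobi operators depend continuously on $a$: if no $\lambda_k(a)$ ever vanished, each $\lambda_k$ would keep a fixed sign on the connected interval $(b,\infty)$ and the index would be constant, a contradiction. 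Thus some $a^\ast$ carries a nontrivial Jacobi field on $\Gamma_2(a^\ast)$, and your final step (no nonzero Killing fields on $E(a^\ast)$ since all semi-axes are distinct, so no Jacobi field of isometric origin) applies verbatim. With this replacement for the appeal to Proposition \ref{ellipsoidsprop}, your proof is correct and agrees with the intended argument.
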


It is a long-standing question of Yau whether a three-manifold with positive Ricci curvature can contain a continuously varying family of minimal surfaces.  One way to hope to prove that it cannot occur is to rule out non-trivial Jacobi fields.  Corollary \ref{moduli} shows that such fields may in fact exist.  
\begin{remark}
In fact, the proof of Proposition \ref{index} shows that the Morse index of $\Gamma_2(a)$ approaches infinity as $a\rightarrow\infty$.
\end{remark}

\begin{remark}
In the case of Lusternik-Schnirelmann geodesics on two-dimensional ellipsoids, the second planar geodesic converges to two stable lines on a flat cylinder as $a\rightarrow \infty$.   The above argument to construct many negative variations fails, since the curvature term appearing in the second variation formula for length is the Gaussian curvature which vanishes on the two-dimensional cylinder.   Needless to say, the catenoid estimate argument to show $\omega_2< 2\omega_1$ also fails in two dimensions as the ``neck" one would add would contribute significant length.
\end{remark}

\subsection{Multiplicity One Conjecture}
Marques-Neves have made the following multiplicity one conjecture \cite{MN_mult}:  
\begin{conj}[Marques-Neves]
Given a min-max procedure on a compact manifold, if the metric is bumpy then any two-sided unstable component of the min-max limit must occur with multiplicity one. 
\end{conj}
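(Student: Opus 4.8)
The plan is to run the philosophy behind Theorem \ref{twoparam} in reverse. Suppose a min-max procedure over a parameter space $X$ with $\dim X = k$ produces, for a bumpy metric, a stationary integral varifold $V = \sum_j m_j \Sigma_j$ with some two-sided \emph{unstable} component, say $\Sigma_1$, occurring with multiplicity $m_1 \geq 2$; we want a contradiction. Let $\phi$ be the lowest eigenfunction of the Jacobi operator $L_{\Sigma_1}$, which does not change sign and has eigenvalue $\lambda < 0$. The idea is to push the $m_1$ nearly-coincident sheets of a near-optimal sweep-out apart in the $\phi$-direction and to insert catenoidal necks between consecutive sheets, exactly as in the construction of the family $\{\Gamma_{s,t}\}$ in the proof of Theorem \ref{twoparam}. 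Taylor expanding the areas of the pushed-off sheets produces a gain of order $-\tfrac{\lambda}{2}\, t^2$, while opening each neck costs only $O(t^2/(-\log t))$ by the catenoid estimate of \cite{KMN}; hence for $t$ small the competitor has maximal area strictly below $|V|$, contradicting the definition of the width.

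I would carry this out in the following steps. First, arrange the $m_1$ sheets of a min-max sequence near $\Sigma_1$ to be mutually disjoint graphs over $\Sigma_1$ — in the three-sphere setting of this paper this is essentially automatic (Lemma \ref{mustintersect}), but in general it requires first regularizing the multisheeted varifold limit; here mean curvature flow with surgery and the mean convex foliations of Theorem \ref{main_foliation_theorem}, applied in each of the $m_1-1$ gaps between consecutive sheets, are the natural tools. Second, run the catenoid-estimate construction gap by gap to build the area-decreasing competitor family, while preserving the cohomological detection of $\omega$ as in Section \ref{sec_twopar}. Third, invoke the pull-tight and Lusternik-Schnirelmann machinery (as in the proof of Theorem \ref{ls}) to convert the strict area drop into a contradiction with minimality of the min-max invariant. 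Finally, use bumpiness to exclude the borderline degenerate case, precisely as in the last paragraph of the proof of Theorem \ref{main}, where $\omega_2 = \omega_1$ is ruled out by producing a continuum of minimal spheres and hence a nontrivial Jacobi field.

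The hard part will be the first step in full generality: a min-max sequence converging as varifolds to $m_1 \Sigma_1$ need not be ordered, disjoint, or even single-valued over $\Sigma_1$, so separating the sheets before the catenoid estimate applies is a genuine difficulty. An alternative that sidesteps this is to perturb the area functional to the prescribed-mean-curvature functional $\mathcal{A}_h(\Sigma) = |\Sigma| - h\,\mathrm{Vol}(\Sigma)$ and let $h \to 0$: no critical point of $\mathcal{A}_h$ is a multiple cover, so one reduces the conjecture to showing that the multiplicity-one $\mathcal{A}_h$-min-max hypersurfaces do not develop collapsing necks in the limit, with their index bound surviving. Either way, the crux is the same — preventing, or ruling out a posteriori, the collapse of several area competitors onto a single unstable minimal hypersurface — and this is why the two-sided unstable case, where the lowest Jacobi eigenfunction supplies a genuine area-decreasing direction, is the accessible one, with the stable and one-sided cases correctly excluded from the statement.
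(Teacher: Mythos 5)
First, a point of orientation: the statement you are proving is stated in this paper as a \emph{conjecture} (of Marques--Neves), not as a theorem. The paper offers no proof of it; Section \ref{sec_ell} only discusses it, and Propositions \ref{limit} and \ref{ellipsoidsprop} use degenerating ellipsoids to show that its hypotheses (compactness, bumpiness) are sharp, since $\Sigma_2(a)\to 2(P\times\{0\})$ as $a\to\infty$. So there is nothing in the paper to compare your argument against, and a sketch at this level of detail cannot settle the question.

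As a proposal, the argument has genuine gaps. (i) Your ``first step'' is the entire difficulty, and the tools you cite do not supply it: varifold convergence of a min-max sequence to $m_1\Sigma_1$ with $m_1\geq 2$ gives no ordering, graphicality, or even Hausdorff closeness of the almost-maximal slices to $\Sigma_1$, and Theorem \ref{main_foliation_theorem} (or mean curvature flow with surgery) foliates a mean convex region whose boundary sheets you would already need to have produced; Lemma \ref{mustintersect} is irrelevant here since the putative sheets are not minimal. (ii) The contradiction scheme is incomplete even granting separated sheets: lowering the supremum of one competitor family near one bad varifold does not contradict $\omega=|V|$, because the width is an infimum over all admissible families and the critical set of a pulled-tight minimizing sequence may contain other varifolds; one needs a deformation theorem pushing \emph{every} minimizing sequence away from the \emph{entire} set of multiplicity-$\geq 2$ limits while preserving detection of the cohomology class, which is exactly the machinery (interpolation, no-mass-concentration, index control) that the real proof requires and that Theorem \ref{ls} only provides in the much simpler situation of a finite set of smooth surfaces. (iii) The gain $-\tfrac{\lambda}{2}t^2$ versus the catenoid cost $O(t^2/(-\log t))$ must be made uniform along the sequence; the ellipsoid example shows the gain can degenerate, so bumpiness/compactness must enter quantitatively, and your sketch does not indicate where. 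Your alternative route via the prescribed-mean-curvature functional $\mathcal{A}_h(\Sigma)=|\Sigma|-h\,\mathrm{Vol}(\Sigma)$ and $h\to 0$ is indeed the strategy of the (later) proof of this conjecture by Zhou, but the step you defer --- ruling out neck collapse and preserving the index bound in the limit --- is precisely the substance of that proof, so as written the proposal identifies the right obstacles without overcoming them.
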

The ellipsoids discussed in the previous section do not violate this conjecture, but show that it is sharp in the sense that as metrics degenerate to a non-compact limit one can have multiplicity in the limit.  Namely, assuming recently announced work of Marques-Neves, we obtain the following proposition.  Recall that $E(a)\rightarrow P\times\mathbb{R}$, where $P$ is a two dimensional ellipsoid:

\begin{proposition}[Ellipsoids]\label{ellipsoidsprop}
For $a$ large enough, any minimal two-sphere $\Sigma_1(a)$ realizing the $1$-width of $E(a)$ has index $1$ and nullity $0$.  The two-width $\omega_2$ of $E(a)$ is realized by a minimal sphere $\Sigma_2(a)$ with index $2$ or else with index $1$ and non-trivial nullity. In either case, 
\begin{equation}
 |\Sigma_2(a)|<2|\Sigma_1(a)|.  
 \end{equation}
 Moreover, 
\begin{equation}
\Sigma_1(a)\rightarrow P\times\{0\}\mbox{ in the sense of varifolds as } a\rightarrow\infty
\end{equation}
and
\begin{equation}
\Sigma_2(a)\rightarrow 2(P \times\{0\})\mbox{ in the sense of varifolds as } a\rightarrow\infty
\end{equation}
\end{proposition}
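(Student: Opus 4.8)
The plan is to combine Theorem~\ref{nongeneric} with an analysis of the geometry of $E(a)$ as $a\to\infty$. Fixing $b,c,d$, the ellipsoids $E(a)$ converge in the pointed $C^\infty$ sense, based near the equatorial sphere $\Gamma_1(a)=E(a)\cap\{x_1=0\}$, to the Riemannian product $P\times\mathbb{R}$, where $P=\{x_2^2/b^2+x_3^2/c^2+x_4^2/d^2=1\}$; indeed on a fixed slab $\{|x_1|\le K\}$ the metric is an $\mathcal{O}(a^{-2})$-perturbation of the warped product $\lambda(x_1)^2 g_P+dx_1^2$ with $\lambda(s)=\sqrt{1-s^2/a^2}$, and the level-set family $\{x_1=t\}$ is an explicit optimal foliation of $E(a)$ with $\Gamma_1(a)$ as its maximal slice. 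The crux is the following statement $(\star)$: there is $\varepsilon(a)\to 0$ so that for $a$ large, the only minimal two-sphere in $E(a)$ of area $\le |P|+\varepsilon(a)$ is $\Gamma_1(a)$, and $\Gamma_1(a)$ is nondegenerate of index one. Granting $(\star)$, the proposition follows without much further effort, so $(\star)$ is where the work lies.

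To prove $(\star)$ I would first show that any minimal two-sphere $\Sigma\subset E(a)$ with $|\Sigma|\le A$ lies in a slab $\{|x_1|\le K(A)\}$, with $K(A)$ independent of $a$: the maximum principle against the level-set foliation (whose leaves are mean convex toward the poles) gives that any minimal two-sphere other than $\Gamma_1(a)$ satisfies $\inf x_1<0<\sup x_1$, and the monotonicity formula together with the uniform bound on the second fundamental form of $E(a)\subset\mathbb{R}^4$ on $\{|x_1|\le a/2\}$ converts the area bound into a bound on the $x_1$-extent. A subsequential limit as $a\to\infty$ of a bounded-area family is then a closed minimal surface in $P\times\mathbb{R}$, hence a slice of area $|P|$ taken with multiplicity one, so $|\Sigma|\to|P|$ and the convergence is smooth with multiplicity one. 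A Lyapunov--Schmidt reduction near the slices applies: $\ker L_{P\times\{s\}}=\ker(-\Delta_P)$ is one-dimensional (the constants), and the bifurcation function $g_a(s)$ is odd in $s$ by the symmetry $x_1\mapsto-x_1$ and satisfies $g_a(s)=\tfrac{c}{a^2}\,s+o(a^{-2})$ uniformly on $\{|s|\le K\}$ with $c\neq 0$ (the $-\tfrac{x_1^2}{a^2}g_P$ term in the metric is responsible). Hence $\{x_1=0\}=\Gamma_1(a)$ is the unique minimal two-sphere near the slices. Finally, since $\Gamma_1(a)$ is totally geodesic, $L_{\Gamma_1(a)}=-\Delta_P-\mathrm{Ric}_{E(a)}(\nu,\nu)$ with $\mathrm{Ric}_{E(a)}(\nu,\nu)|_{\Gamma_1(a)}=\tfrac{2}{a^2}+o(a^{-2})>0$, so its lowest eigenvalue is $\approx -2/a^2<0$ and its next is $\approx\lambda_2(P)>0$: index one, nullity zero.

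Now the proposition. Since $R_{E(a)}$ is bounded below by a positive constant for $a$ large, the Gauss equation and the stability inequality force any stable minimal two-sphere to have area bounded independently of $a$; such a sphere then converges, along a subsequence in $a$, either to a single slice with multiplicity one — forcing it to equal $\Gamma_1(a)$, which is unstable — or to a slice with multiplicity $\ge 2$, in which case, being connected, it contains an arbitrarily thin neck and is therefore unstable. Either way a contradiction, so $E(a)$ contains no stable minimal two-sphere for $a$ large, and Lemma~\ref{lemma_width} and Theorem~\ref{nongeneric} apply. By $(\star)$ the $1$-width sphere is $\Gamma_1(a)$ — index one, nullity zero, $\Gamma_1(a)\to P\times\{0\}$ as varifolds, $\omega_1(E(a))=|P|$ — and there are not infinitely many minimal two-spheres of area $\omega_1(E(a))$, so Theorem~\ref{nongeneric} yields $\Sigma_2(a)$, arising with multiplicity one from two-parameter min-max, with $|\Sigma_1(a)|<|\Sigma_2(a)|<2|\Sigma_1(a)|$; its index is $\le 2$ by \eqref{indexbound} and $\ge 1$ by the absence of stable spheres, while the alternative ``index two, or index one with nontrivial nullity'' is the min-max index/nullity lower bound for multiplicity-one min-max surfaces (the point closest to the announced work of Marques--Neves quoted after Theorem~\ref{main}). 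Lastly $\Sigma_2(a)$ satisfies $\inf x_1<0<\sup x_1$ and has bounded area, so along a subsequence it converges to $k\,(P\times\{\ast\})$ with $k\in\{1,2\}$; if $k=1$ the convergence is smooth with multiplicity one, so for $a$ large $\Sigma_2(a)$ would be a minimal two-sphere of area $\to|P|$ near a slice, hence equal to $\Gamma_1(a)$ by $(\star)$, contradicting $|\Sigma_2(a)|>|\Sigma_1(a)|$; therefore $k=2$, i.e.\ $\Sigma_2(a)\to 2(P\times\{0\})$ as varifolds.

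I expect the main obstacle to be $(\star)$, and within it the uniform Lyapunov--Schmidt estimate $g_a(s)=\tfrac{c}{a^2}s+o(a^{-2})$: one must control the bifurcation function, including its $s$-derivative, uniformly in $a$ near a degenerate one-dimensional kernel, which requires quantitative control of how $E(a)$ deviates from the product at order $a^{-2}$ (and of the projections of those deviations onto the constants). The remaining delicate points are the complete absence of stable minimal two-spheres in $E(a)$ and the precise index/nullity dichotomy for $\Sigma_2(a)$, the latter relying on min-max index theory that in full strength is only announced.
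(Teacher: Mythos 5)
Your overall architecture (limit space $P\times\mathbb{R}$, slices as the only closed minimal surfaces there, area/width bounds from Theorem \ref{nongeneric}, multiplicity dichotomy $k\in\{1,2\}$ for $\Sigma_2(a)$) matches the paper, but the load-bearing step of your proposal is the claim $(\star)$, and as written this is a genuine gap. Everything specific to your route --- the identification $\Sigma_1(a)=\Gamma_1(a)$, the nullity-zero assertion, the exclusion of the ``infinitely many spheres of area $\omega_1$'' alternative in Theorem \ref{nongeneric}, and the exclusion of $k=1$ in the last step --- is funneled through $(\star)$, whose proof you only sketch: the uniform Lyapunov--Schmidt reduction near a degenerate one-dimensional kernel and the bifurcation-function expansion $g_a(s)=\tfrac{c}{a^2}s+o(a^{-2})$ with $c\neq 0$, uniformly in $a$, are exactly the estimates you yourself flag as ``the main obstacle,'' so the proposal is incomplete at its core. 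Your route to ``no stable minimal two-spheres'' is also shakier than necessary: the uniform lower bound on $R_{E(a)}$ is asserted without proof, the multiplicity-one branch of that dichotomy again invokes $(\star)$, and the ``thin neck implies unstable'' branch is unargued; the simple fact, used in the paper, is that $E(a)$ is strictly convex, hence has positive Ricci curvature for every $a$, which rules out stable two-spheres outright and (via Frankel) forces every minimal two-sphere to meet the middle slice --- a point you also need to locate the limit slice at $x_1=0$.

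The paper shows that $(\star)$ is avoidable, which is worth internalizing. It never identifies $\Sigma_1(a)$ with $\Gamma_1(a)$: the area bound $|\Sigma_1(a)|\leq|\Gamma_1(a)|$ plus Choi--Schoen \cite{ChoiSchoen} gives smooth, multiplicity-one convergence $\Sigma_1(a)\to P\times\{0\}$; index one comes from Lemma \ref{lemma_width}; and nullity zero follows from a soft eigenvalue-convergence argument --- if $\Sigma_1(a)$ had a nontrivial Jacobi field, its two nonpositive eigenvalues would converge to nonpositive eigenvalues of the stable slice, forcing the slice to have nullity at least $2$, whereas its nullity is exactly $1$. The same eigenvalue-counting trick rules out $k=1$ for $\Sigma_2(a)$ without any uniqueness statement. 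For the existence and index/nullity dichotomy of $\Sigma_2(a)$, the paper approximates $E(a)$ by bumpy positive-Ricci metrics, applies Theorem \ref{main} together with the announced Marques--Neves result, and passes to the limit with multiplicity one (a higher-multiplicity limit would produce a positive Jacobi field, i.e.\ a stable sphere); your alternative via Theorem \ref{nongeneric} plus \eqref{indexbound} and an index-plus-nullity lower bound leans on the same announced input, so that reliance is acceptable, but the remaining steps should be rebuilt on the soft compactness arguments above rather than on $(\star)$.
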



\begin{remark}
Loosely speaking, for $a$ large, $\Sigma_2(a)$ resembles two parallel copies of $\Gamma_1(a)$ joined by a small neck. An interesting problem is to study where precisely the neck is located, and whether it is rigid.  \end{remark}
\begin{proof}

 Note that the only finite area minimal surfaces in $P\times\mathbb{R}$ are of the form $P\times\{t\}$ for some $t\in\mathbb{R}$ (this follows from the maximum principle and the monotonicity formula). 


Let $V$ denote a varifold limit of $\Sigma_1(a)$ as $a\rightarrow\infty$.  Since $E(a)$ has positive Ricci curvature, it follows by Frankel's theorem that $\Sigma_1(a)$ contains a point in $P\times\{0\}$.  Since each $\Sigma_1(a)$ contains a point in $P\times\{0\}$, by the monotonicity formula, $V$ contains a point in $P\times\{0\}$, and in particular $V$ is not the empty set.  By the area bounds, 
\begin{equation}\label{firstwidth}
|\Sigma_1(a)|\leq |\Gamma_1(a)|, 
\end{equation}
it follows from Choi-Schoen \cite{ChoiSchoen} that $V$ is a smooth minimal surface (potentially with multiplicity).  Again by the area bound \eqref{firstwidth}, it follows that the multiplicity with which $V$ occurs is $1$ and thus $V= P\times\{0\}$ as claimed.  The index of $\Sigma_1(a)$ is $1$ by Lemma \ref{lemma_width}.  To see that $\Sigma_1(a)$ does not have nullity, observe that by the smooth convergence $\Sigma_1(a)\rightarrow V$, if it had nullity, and since $V$ is stable, the nullity of $V$ would be at least $2$.  This is a contradiction as the nullity of $V$ in $P\times\mathbb{R}$ is $1$.

Let us now consider the index and nullity of the second minimal two-sphere $\Sigma_2(a)$.  Let $g_i$ be a sequence of bumpy metrics approaching $E(a)$.  Note that we can assume that $g_i$ have positive Ricci curvature and thus do not admit any stable two-spheres.  Thus Theorem \ref{main} implies that the $2$-width of the metric $g_i$ is realized by an index $1$ or $2$ minimal surface $\Sigma_2^i(a)$ and is obtained with multiplicity $1$.  Marques-Neves have announced that in this case, one can assume that the index of $\Sigma_2^i(a)$ is equal to the number of parameters used in the sweepouts to produce it, and thus is $2$.  Thus as $i\rightarrow\infty$ the sequence $\Sigma_2^i(a)$ converges to a minimal sphere $\Sigma_2(a)$ in $E(a)$ with index $2$ or else index $1$ and non-trivial nullity (a stable surface is ruled out as $E(a)$ admits no such surfaces).  Moreover, $\Sigma_2(a)$ realizes the $2$-width of $E(a)$.  Note that the convergence is with multiplicity $1$ (otherwise, one obtains a positive Jacobi field in violation of the fact that $E(a)$ contains no stable two-spheres).

Now let $V$ denote a varifold limit of $\Sigma_2(a)$ as $a\rightarrow\infty$. Since each $\Sigma_2(a)$ contains a point in $P\times\{0\}$, again by the monotonicity formula, $V$ contains a point in $P\times\{0\}$, and in particular $V$ is not the empty set.  

We claim $V$ is the varifold $P\times\{0\}$ counted with multiplicity $2$. Since the surfaces $\Sigma_2(a)$ have bounded area and genus, and stay in a compact region in $P\times\mathbb{R}$, it follows by Choi-Schoen \cite{ChoiSchoen} that $V$ is a smooth minimal surface of finite area, potentially counted with multiplicity $k$.  Since the only finite area smooth minimal surfaces are of the form $P\times \{t\}$ and $V$ contains a point in $P\times \{0\}$, it follows that $V$ is the surface $P\times \{0\}$ counted with some integer multiplicity.  By the width bound and the earlier part of this proposition,
\begin{equation}
|\Sigma_2(a)|<2|\Sigma_1(a)|\rightarrow 2|P\times\{0\}|
\end{equation}
we see that $k$ is either $1$ or $2$.  Finally observe that $k$ cannot be equal to $1$, since in that case the local regularity theorem would imply smooth convergence.  By converge of eigenvalues of the stability operator under smooth convergence, the two negative eigenvalues of $\Sigma_2(a)$ (or one negative and one zero eigenvalue) converge to zero as $a\rightarrow\infty$.  But the nullity of $P\times\{0\}$ is $1$, not $2$.  This is a contradiction.  Thus $\Sigma_2(a)\rightarrow 2(P \times\{0\})$ as claimed.
\end{proof}

\begin{remark}
One way to understand this example is to observe that the space of essential embedded two-spheres in $\mathbb{S}^2\times\mathbb{R}$ retracts onto a single two-sphere.  Since $E(a)$ (which as a topological $\mathbb{S}^3$ has interesting four parameter families of embedded two-spheres) is converging to a space with much less topology in the space of embedded two-spheres it is reasonable that one has fewer options for the limiting two-sphere and thus multiplicity is forced. Morally, this is somewhat reminiscient of the fact that multiplicity can occur for mean curvature flow as $t\to \infty$.
\end{remark}

\subsection{Equidistribution of Widths}
In \cite{MN} Marques-Neves consider applying min-max theory to the space of $2$-cycles with $\mathbb{Z}_2$ coefficients in a three-manfiold.  This space is homotopy equivalent to $\mathbb{RP}^\infty$ and thus one can consider non-trivial $k$ parameter sweepouts for any positive integer $k$ (corresponding to non-trivial $\mathbb{RP}^k\subset \mathbb{RP}^\infty$).  They considered the corresponding $k$-widths $\omega_k$ and conjectured that the minimal surfaces $\Sigma_k$ realizing the $k$-widths should become equidistributed in the ambient manifold as $k\rightarrow\infty$.  

The elongated ellipsoids provide an interesting family of examples which do not disprove the conjecture but show that given any positive integer $k$, one can find a manifold all of whose first $k$-widths are realized by minimal surfaces arbitrarily close in the Hausdorff topology to a single minimal surface (and thus are not equistributed). Namely, we have the following:
\begin{proposition}
Given any $k\in\mathbb{N}$, and $\eps>0$, there exists $a(k, \eps)>0$ so that whenever $a>a(k, \eps)$ the minimal surfaces $\{\Sigma_j(a)\}_{j=1}^k$ realizing the first $k$-widths in $E(a)$ are contained in an $\eps$-neighborhood (in Hausdorff topology) of $P\times\{0\}$.
\end{proposition}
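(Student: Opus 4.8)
The plan is to combine a cheap a priori upper bound for the widths $\omega_j(E(a))$ with a compactness argument in the cylindrical limit $E(a)\to P\times\R$, the key geometric input being that a connected minimal surface of controlled area in a very elongated ellipsoid cannot ``stretch'' in the $x_1$-direction. For the area bound, let $f=x_1|_{E(a)}$, which after an arbitrarily small perturbation we may take to be Morse. A standard polynomial sweepout construction (c.f. \cite{MN}) applied to $f$ gives, for every $j$, a $j$-sweepout of $E(a)$ (detecting $\bar\lambda^j$, in the notation of \cite{MN}) whose slices are boundaries of sublevel sets $\{p(f)<0\}$ of univariate polynomials $p$ of degree $\le j$. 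Each such slice lies in at most $j$ level sets $\{x_1=t\}$, so its mass is at most $j\cdot\sup_t\mathcal H^2\!\big(E(a)\cap\{x_1=t\}\big)$. Since $E(a)\cap\{x_1=t\}$ is the planar ellipsoid $P$ rescaled by $\sqrt{1-t^2/a^2}$, this supremum equals $|P|$ and is attained at $t=0$; hence $\omega_j(E(a))\le j\,|P|\le k\,|P|$ for all $a$ and all $j\le k$.

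\textbf{Step 2 (uniform extent bound).} Fix $j\le k$ and let $\Sigma_j(a)$ realize $\omega_j(E(a))$. By the regularity and index estimates of Marques--Neves it is a smooth closed embedded minimal surface (with some multiplicity) of index bounded in terms of $j$ and of area $\le k|P|$. As $E(a)$ is strictly convex it has positive Ricci curvature, so Frankel's theorem forces $\Sigma_j(a)$ to be connected (any two of its components would be disjoint closed minimal surfaces) and to meet the totally geodesic planar sphere $\Gamma_1(a)=E(a)\cap\{x_1=0\}$. On each slab $E(a)\cap\{|x_1|\le L\}$ the ambient curvature and injectivity radius are bounded independently of $a$ once $a\ge 2L$ (these slabs converge smoothly to $P\times[-L,L]$), so the monotonicity formula gives a uniform lower bound $c_0=c_0(b,c,d)>0$ on the mass of $\Sigma_j(a)$ in any ambient $r_0(b,c,d)$-ball centered on $\Sigma_j(a)$ within the slab. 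Since $\Sigma_j(a)$ is connected and meets $\{x_1=0\}$, reaching $\{|x_1|=L\}$ would force it to meet every level $\{x_1=t\}$ with $|t|\le L$; choosing $\sim L/r_0$ points along these levels at mutual distance $\ge r_0$ and applying monotonicity at each yields area $\gtrsim c_0 L/r_0$. Comparing with the bound $\le k|P|$ produces a constant $L_k=L_k(k,b,c,d)$ with $\Sigma_j(a)\subseteq\{|x_1|\le L_k\}$ for all $j\le k$ and all large $a$.

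\textbf{Step 3 (compactness, identification of the limit, conclusion).} Suppose the proposition fails: there are $a_n\to\infty$, an index $j\le k$, and realizers $\Sigma_j(a_n)$ having a point $z_n$ at distance $\ge\eps$ from $P\times\{0\}$. On $E(a_n)\cap\{|x_1|\le L_k+1\}\to P\times[-L_k-1,L_k+1]$ these surfaces have uniformly bounded area and index, so by the standard compactness theorem for minimal surfaces (c.f. \cite{ChoiSchoen}, in the version with smoothly converging ambient metrics) a subsequence converges, as varifolds and smoothly away from finitely many points, to a stationary integral varifold $V_j$ in $P\times\R$ supported in $\{|x_1|\le L_k\}$ with $\|V_j\|\le k|P|$. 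As $x_1$ is harmonic on any minimal surface in the product cylinder, $\operatorname{supp}V_j$ is a finite union $\bigcup_i P\times\{t_i\}$. If two distinct values $t_1<t_2$ occurred, then $\Sigma_j(a_n)$ -- being connected with points converging into $P\times\{t_1\}$ and into $P\times\{t_2\}$ -- would meet the level $\{x_1=(t_1+t_2)/2\}$, and monotonicity would deposit at least $c_0>0$ mass of $\Sigma_j(a_n)$ in a fixed ball about such an intersection point, impossible since $V_j$ carries no mass there. Hence $V_j=\mu_j\,(P\times\{t_0\})$ for a single $t_0$ and an integer $\mu_j\ge 1$; and since $\Sigma_j(a_n)$ meets $\Gamma_1(a_n)\to P\times\{0\}$, necessarily $t_0=0$. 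Finally, using $z_n\in\{|x_1|\le L_k\}$ and monotonicity, $\Sigma_j(a_n)$ carries mass $\ge c_0(\eps)>0$ in the closed set $F=\{|x_1|\le L_k\}\setminus N_{\eps/2}(P\times\{0\})$, contradicting $\limsup_n\|\Sigma_j(a_n)\|(F)\le\|V_j\|(F)=0$. Running this over the finitely many $j\le k$ and taking $a(k,\eps)$ to be the largest threshold yields the claim. (In fact the argument shows $\Sigma_j(a)\to\mu_j\,(P\times\{0\})$ as varifolds as $a\to\infty$, generalizing Proposition~\ref{ellipsoidsprop}.)

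\textbf{Main obstacle.} The crux is ruling out limit components $P\times\{t_i\}$ with $t_i\ne 0$ in Step~3, together with the companion extent bound of Step~2: both reduce to the fact that a connected minimal surface of a priori bounded area in $E(a)$ cannot develop a long thin neck running in the $x_1$-direction, which we quantify by applying the monotonicity formula at many uniformly separated points along such a neck. The point requiring care is that all the constants ($L_k$, $r_0$, $c_0$) must be uniform as $a\to\infty$; this is exactly what the $C^\infty_{\mathrm{loc}}$-convergence $E(a)\to P\times\R$, with its uniform curvature and injectivity-radius bounds on bounded $x_1$-slabs, supplies.
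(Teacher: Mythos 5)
Your argument is correct and follows essentially the same route as the paper's proof: the linear bound $\omega_j(E(a))\le j\,|P|$, a varifold limit as $a\to\infty$, the monotonicity formula at points forced by Frankel's theorem, and the classification of bounded-mass stationary varifolds in $P\times\mathbb{R}$ as slices $P\times\{t\}$. Your Steps 1 and 2 merely make explicit what the paper treats as the ``trivial linear bound'' and handles implicitly (ruling out mass stretching or escaping in the $x_1$-direction), so they are elaborations of the same argument rather than a different method.
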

\begin{proof}
Suppose $k$ is given and fixed.  Let $\omega_k(a)$ denote the $k$-width in $E(a)$ realized by a minimal surface $\Sigma_k(a)$ (potentially with multiplicity).
Note that we have the trivial linear bound
\begin{equation}\label{d}
\omega_k(a)\leq k|P\times\{0\}|
\end{equation}
for all $a$. If the assertion of the proposition failed, there would be a sequence of $a_i\rightarrow\infty$ and some integer $j$ satisfying $1\leq j\leq k$ so that $\Sigma_j(a_i)$ is not contained in any $\eps$-neighborhood of $P\times\{0\}$.  By the area bounds \eqref{d}, it follows that we can take a subsequential limit $V$ of $\Sigma_j(a_i)$ that is a stationary integral varifold with bounded area. But as in the previous section, $\Sigma_j(a_i)$ intersect $P\times\{0\}$ and so $V$ cannot vanish by the monotonicity formula and moreover contains a point in $P\times\{0\}$.  But the only stationary integral varifold with bounded area in $P\times\mathbb{R}$ is a slice of the form $P\times\{t\}$ for some $t\in\mathbb{R}$, and we necessarily have $t=0$ in our case.  This is a contradiction.
\end{proof}

\section{Minimal embedded projective planes in $\mathbb{RP}^3$}\label{section_projective}
In this final section, we prove the following theorem:  
\begin{theorem}[Projective Planes in $\mathbb{RP}^3$]\label{rp2s}
Let $M$ be a $3$-manifold diffeomorphic to $\mathbb{RP}^3$ endowed with a metric of positive Ricci curvature.  Then $M$ admits at least two embedded minimal projective planes.  
\end{theorem}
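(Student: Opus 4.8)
The plan is to run the argument of Theorem~\ref{main} with embedded projective planes in place of two-spheres, the area-minimizing projective plane playing the role of the stable two-sphere in alternative~(1). Since positive Ricci curvature is an open condition and bumpy metrics are generic \cite{White_bumpy}, I would first perturb the metric to be bumpy while keeping positive Ricci curvature, prove the statement in the bumpy case, and recover the general case by a limiting argument as in Theorem~\ref{nongeneric}; so assume from now on that the metric is bumpy. First I would produce one minimal projective plane $P_1$ by minimizing area over embedded projective planes (equivalently, Simon--Smith min-max with surfaces of topological type $\mathbb{RP}^2$ and no parameters), using Choi--Schoen compactness. By Frankel's theorem (positive Ricci curvature) any two closed minimal surfaces in $M$ intersect, so the resulting stationary varifold is carried by a single connected minimal surface; it is non-orientable, since it represents the nonzero class in $H_2(\mathbb{RP}^3;\mathbb{Z}_2)$ and no two-sphere does; and its non-orientable genus is at most one by the genus control in the Simon--Smith construction, so it is an $\mathbb{RP}^2$ (a minimal two-sphere is excluded because a stable two-sided minimal sphere cannot exist in positive Ricci curvature), occurring with multiplicity one by minimality of the area. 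Thus $P_1$ is a stable embedded minimal projective plane; set $\omega_0:=|P_1|$, the least area of an embedded minimal projective plane, and note that bumpiness forces $P_1$ to be \emph{strictly} stable.

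Next, a thin tubular neighborhood $N$ of $P_1$ is a twisted interval bundle over $\mathbb{RP}^2$, and since $\mathbb{RP}^3=N\cup_{\mathbb{S}^2}D$ with $D$ a three-ball, the closure $D:=M\setminus\mathrm{Int}(N)$ is a smooth three-disc whose boundary $\partial D=\partial N$ is a two-sphere double-covering $P_1$ and mean convex toward $D$ (by strict stability). Applying Theorem~\ref{main_foliation_theorem} to $D$: alternative~(1) is impossible because $M$ has no stable minimal two-sphere, so alternative~(2) holds and $D$ carries a smooth mean convex foliation by two-spheres terminating in a one-dimensional graph. Concatenating with the foliation of $N$ whose leaves are $P_1$ and the two-spheres $\partial N_s$ of the sub-bundles $N_s\subset N$, I obtain an optimal sweep-out of $M$ whose maximal slice is $P_1$ and all of whose other slices are two-spheres of area $<2\omega_0$. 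Running the construction of Sections~\ref{sec_optimal}--\ref{sec_twopar} in this setting (in particular using the catenoid estimate \cite{KMN} to open the neck near the degeneration $\partial N_s\to 2P_1$), I would upgrade this to a sweep-out detecting $\alpha\in H^1$ of the space of projective planes with all areas strictly below $2\omega_0$; hence $\omega_1(M)<2\omega_0$.

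Finally, apply Simon--Smith min-max \cite{SS} to $\omega_1(M)$. As above the resulting varifold is $m\,P_2$ for a single connected embedded minimal surface $P_2$; since the $\mathbb{Z}_2$-fundamental class of the sweep-out slices (the generator of $H_2(\mathbb{RP}^3;\mathbb{Z}_2)$) is detected in the limit and two-spheres are trivial in that group, $P_2$ is again an $\mathbb{RP}^2$ and $m$ is odd, and from $m\omega_0\le m|P_2|=\omega_1(M)<2\omega_0$ we get $m=1$. Thus $P_2$ is an embedded minimal projective plane of area $\omega_1(M)$. If $\omega_1(M)>\omega_0$ then $P_2\neq P_1$ and $M$ has at least two embedded minimal projective planes; if $\omega_1(M)=\omega_0$, the Lusternik--Schnirelmann argument of Theorem~\ref{ls} (the vanishing Lemma~\ref{vanishing} applied to a neighborhood of the space of area-$\omega_0$ minimal projective planes, which has trivial first cohomology) produces infinitely many embedded minimal projective planes of area $\omega_0$, in particular at least two. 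Undoing the initial perturbation then yields two embedded minimal projective planes in the original $(M,g)$ (distinct because $|P_1^i|=\omega_0(g_i)$ and $|P_2^i|=\omega_1(g_i)<2\omega_0(g_i)$ converge to the areas of their limits, and if those limits coincide we apply the Lusternik--Schnirelmann argument directly to $g$).

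The step I expect to be the main obstacle is the topological bookkeeping of the last paragraph together with the sweep-out construction of the second: pinning down the correct space of ``projective-plane sweep-outs'' and its cohomology ring, verifying that the nonzero class in $H_2(\mathbb{RP}^3;\mathbb{Z}_2)$ genuinely survives in the min-max limit (so that no cancellation produces a multiplicity-two two-sphere and no two-sphere appears with multiplicity one), and adapting the catenoid estimate to the one-sided degeneration $\partial N_s\to 2P_1$ so as to obtain the strict bound $\omega_1(M)<2\omega_0$ that rules out higher multiplicity.
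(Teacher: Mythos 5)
There is a genuine gap at the heart of your second and third paragraphs. The family you build from the decomposition $M=N\cup D$ consists of the single projective plane $P_1$ together with the spheres $\partial N_s$ and the leaves of the foliation of $D$ coming from Theorem \ref{main_foliation_theorem}. Every slice except the central one is an embedded two-sphere, hence null-homologous in $H_2(\mathbb{RP}^3;\mathbb{Z}_2)$; in the flat topology the slices tend to $0$ (not to $P_1$) as $s\to 0$. So this is a sweep-out of $M$ by trivial $\mathbb{Z}_2$-cycles, not a family in the space of embedded projective planes, and the phrase ``detecting $\alpha\in H^1$ of the space of projective planes'' does not apply to it. Consequently your key claim -- that the min-max limit $P_2$ must be an $\mathbb{RP}^2$ with odd multiplicity because ``the $\mathbb{Z}_2$-fundamental class of the sweep-out slices is detected in the limit'' -- rests on a false premise: the slices do not represent the generator of $H_2(\mathbb{RP}^3;\mathbb{Z}_2)$. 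With sphere slices the genus control of \cite{K} allows the limit to be an orientable minimal surface (e.g.\ an unstable minimal two-sphere, which positive Ricci does not exclude), and then you have produced no second projective plane. Nor can you repair this by converting each sphere slice into a projective plane by tubing it to $P_1$: that is exactly what the paper does, but then near the degenerate end the area is close to $2\omega_0+\omega_0$, so the achievable bound is $<3\omega_0$, not the $<2\omega_0$ you assert; the bound $\omega_1<2\omega_0$ for a genuine family of projective planes is unsubstantiated in your sketch. The paper's proof of Theorem \ref{rp2s} is structured precisely around this point: it passes to the double cover, sweeps out one lifted ball $B_+$ by spheres, unions each slice with the fixed lifted sphere $\tilde\Sigma_0$, projects and tubes to obtain a one-parameter family of embedded projective planes with maximum area strictly below $3|\Sigma_0|$ (catenoid estimate near the degeneration), and then uses that every slice is a projective plane so that the min-max limit is a projective plane with \emph{odd} multiplicity $k$; the strict bound plus oddness forces $k=1$, and the Lusternik--Schnirelmann fallback of Theorem \ref{ls}-type handles the case where the min-max value equals $|\Sigma_0|$. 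Notably, the paper needs no mean curvature flow and no bumpy perturbation for this theorem, whereas your route invokes Theorem \ref{main_foliation_theorem} and a perturbation/limiting step that introduces further loose ends (the two planes for the perturbed metrics could collapse to one in the limit, which you only wave at).

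Two smaller inaccuracies: the mean convexity of $\partial D=\partial N_s$ toward $D$ does not follow from strict stability of $P_1$ -- stability of a one-sided surface controls only anti-invariant variations of the double cover, while the unit push-off corresponds to the invariant constant function -- but it does follow from positive Ricci curvature, since $\int_{\tilde P_1}\bigl(|A|^2+\textrm{Ric}(\nu,\nu)\bigr)>0$ makes the push-off strictly area-decreasing and mean convex away from $P_1$; so the conclusion survives even though the stated reason is wrong. Your first step (existence of the area-minimizing projective plane) is essentially the paper's, which simply cites \cite{BBEN}. If you want to keep the spirit of your approach, the missing ingredient is a genuine nontrivial one-parameter family of embedded projective planes with a usable area bound, and the paper's double-cover construction together with the $3|\Sigma_0|$ threshold and the parity argument from the genus bounds of \cite{K} is exactly the device that supplies it.
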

Unlike in the study of two-spheres in the three-sphere, we do not need the mean curvature flow in Theorem \ref{rp2s} and the argument only uses min-max theory.  On the other hand, we do need to assume positive Ricci curvature as otherwise we could obtain the existence of a minimal projective plane and potentially a minimal two-sphere.  The result can also be interpreted as a kind of non-orientable analog to Theorem 3.3 in \cite{KMN}.
\begin{proof}
Let $\mathcal{P}$ denote the space of projective planes embedded in $M$ and denote
\begin{equation}\label{minimum}
\mathcal{A}(M):=\inf_{\Sigma\in\mathcal{P}} |\Sigma|.
\end{equation} 
The infimum  in \eqref{minimum} is achieved by an embedded minimal projective plane $\Sigma_0$ (see for instance Proposition 5 in \cite{BBEN}).  We will now use $\Sigma_0$ and a one-parameter min-max argument to produce a second minimal projective plane.

Since $M$ has positive Ricci curvature, there is a double cover $\tilde{M}$ of $M$ in which the lift $\tilde{\Sigma}_0$ of $\Sigma_0$  is an unstable minimal two-sphere bounding $3$-balls $B_\pm$ on both sides.  Note that $B_+$ and $B_-$ are interchanged by the deck group of the covering, and that $\tilde{\Sigma}_0$ is fixed by the group.

Since $\tilde{M}$ has positive Ricci curvature, we can easily obtain an optimal foliation $\{\tilde{\Sigma}_t\}_{t\in [0,1]}$ of $B_+$ by embedded two-spheres. 
Now for $t\in [0,1]$ let
\begin{equation}
\tilde{\Gamma}_t:=\tilde{\Sigma}_t\cup\tilde{\Sigma}_0,
\end{equation}
and consider the set $\Gamma'_t$ obtained by projecting $\tilde{\Gamma}_t$ down to $M$.  Note that
\begin{equation}
\sup_{t\in [0,1]}|\Gamma'_t|=3|\Sigma_0|.
\end{equation}
Finally, for each $t$, we connect the two disjoint components of $\Gamma'_t$ by a very skinny tube whose area approaches $0$ as $t\rightarrow 0$ and thus we obtain a family of surfaces $\Gamma_t$ so that $\Gamma_t\rightarrow\Sigma_0$ as $t\rightarrow 0$.   For $t$ near $1$, using the catenoid estimate again we can open up the neck of the tube to retract $\Gamma_t$ to $\Sigma_0$.  It is not hard to see that this new family $\Gamma_t$ is a non-trivial family of projective planes.  After applying the catenoid estimate, we obtain that the amended family satisfies
\begin{equation}\label{catenoidmax}
\sup_{t\in [0,1]}|\Gamma_t|<3|\Sigma_0|.
\end{equation}

Considering the saturation of the family $\Gamma_t$ we can consider the associated min-max value and minimal surface $\Sigma_1$.  Since $M$ has positive Ricci curvature, $\Sigma_1$ is connected.   By the genus bounds obtained in \cite{K}, it follows that $\Sigma_1$ is itself a projective plane obtained with odd multiplicity.   If $\Sigma_1$ is not equal to $k\Sigma_0$ for some integer $k$ then we are done.  

Suppose that $\Sigma_1=k\Sigma_0$ for some odd integer.  By \eqref{catenoidmax}, $k<3$.   Thus the only possibility is that $k=1$ and $\Sigma_1=\Sigma_0$.  

However, in this case the following elementary Lusternik-Schnirelmann argument produces a second projective plane: Suppose there exists a sequence of pulled tight sweep-outs $\Gamma^i_t$ satisfying:
\begin{equation}\label{areasgoingdown}
\sup_{t\in\mathbb{S}^1}|\Gamma^i_t|\leq |\Sigma_0|+\eps_i
\end{equation}
for some sequence $\eps_i\rightarrow 0$.  

Let $\eps>0$.  For each $i$, we can consider the following subsets of $\mathbb{S}^1$
\begin{equation}
A_i:= \{t\in \mathbb{S}^1 \;|\; \mathcal{F}(\Gamma^i_t,\Sigma_0)<\eps\}
\end{equation}
and
\begin{equation}
B_i:= \{t\in\mathbb{S}^1 \;|\; \mathcal{F}(\Gamma^i_t,\Sigma_0)>\eps/2\}.
\end{equation}
Since the $\Gamma^i_t$ are non-trivial sweep-outs, it follows that $B_i$ is non-empty as long as $\eps$ is chosen small enough and $i$ is large enough.

Consider a min-max sequence $\Gamma^i_{t_i}$ obtained from $B_i$ (by \eqref{areasgoingdown} any sequence of parameters $t_i$ in $B_i$ will suffice).  If the min-max sequence $\Gamma^i_{t_i}$ is almost minimizing, then it has a smooth limit distinct from $\Sigma_0$ as it is obtained from the set $B_i$ a definite distance from $\Sigma_0$.   Thus we obtain a second minimal embedded projective plane $\Sigma_2$ with area equal to that of $\Sigma_0$.  

If instead $\Gamma^i_{t_i}$ is not almost minimizing, then for $i$ large we can decrease area of $\Gamma^i_{t_i}$ below $|\Sigma_0|$.  This contradicts the fact that $\Sigma_0$ realizes the infimum for the areas of projective planes in $M$.
\end{proof}

\bibliography{HaslhoferKetover}

\begin{thebibliography}{10}

\bibitem{ABN}
S.~Alexakis, T.~Balehowsky, and A.~Nachman.
\newblock private communication.

\bibitem{Almgren}
F.~Almgren.
\newblock Some interior regularity theorems for minimal surfaces and an
  extension of {Bernstein}'s theorem.
\newblock {\em Annals of Math.}, 84:272--292, 1966.

\bibitem{Birkhoff}
G.~Birkhoff.
\newblock Dynamical systems with two degrees of freedom.
\newblock {\em Trans. Amer. Math. Soc.}, 18(2):199--300, 1917.

\bibitem{BBEN}
H.~Bray, S.~Brendle, M.~Eichmair, and A.~Neves.
\newblock Area-minimizing projective planes in 3-manifolds.
\newblock {\em Comm. Pure and Appl. Math.}, 63(9):1237--1247, 2010.

\bibitem{Brendle_noncoll_mfd}
S.~Brendle.
\newblock An inscribed radius estimate for mean curvature flow in {R}iemannian
  manifolds.
\newblock {\em Ann. Sc. Norm. Super. Pisa Cl. Sci.}, 16(4):1447--1472, 2016.

\bibitem{BrendleHuisken}
S.~Brendle and G.~Huisken.
\newblock Mean curvature flow with surgery of mean convex surfaces in
  three-manifolds.
\newblock {\em J. Eur. Math. Soc. (to appear)}.

\bibitem{BHH}
R.~Buzano, R.~Haslhofer, and O.~Hershkovits.
\newblock The moduli space of two-convex embedded spheres.
\newblock {\em arXiv:1607.05604}, 2016.

\bibitem{CGG}
Y.~Chen, Y.~Giga, and S.~Goto.
\newblock Uniqueness and existence of viscosity solutions of generalized mean
  curvature flow equations.
\newblock {\em J. Differential Geom.}, 33(3):749--786, 1991.

\bibitem{ChoiSchoen}
H.~Choi and R.~Schoen.
\newblock The space of minimal embeddings of a surface into a three-dimensional
  manifold of positive {R}icci curvature.
\newblock {\em Invent. Math.}, 81(3):387--394, 1985.

\bibitem{CD}
T.~Colding and C.~De~Lellis.
\newblock The min-max construction of minimal surfaces.
\newblock {\em Surveys in Differential Geometry VIII}, pages 75--107, 2003.

\bibitem{CGK}
T.~Colding, D.~Gabai, and D.~Ketover.
\newblock On the classifications of heegaard splittings.
\newblock {\em arXiv:1509.05945}, 2015.

\bibitem{C}
O.~Cornea, G.~Lupton, J.~Oprea, and D.~Tanre.
\newblock {Lusternick}-{Schnirelman} category.
\newblock {\em Mathematical Surveys and Monographs, AMS}, 103, 2003.

\bibitem{ES}
L.~Evans and J.~Spruck.
\newblock Motion of level sets by mean curvature. {I}.
\newblock {\em J. Differential Geom.}, 33(3):635--681, 1991.

\bibitem{Grayson}
M.~Grayson.
\newblock Shortening embedded curves.
\newblock {\em Ann. of Math. (2)}, 129(1):71--111, 1989.

\bibitem{G}
L.~Guth.
\newblock The endpoint case of the {Bennett--Carbery--Tao} multilinear {Kakeya}
  conjecture.
\newblock {\em Acta Math.}, 205(2):263--286, 2010.

\bibitem{Ham_mon_amb}
R.~Hamilton.
\newblock Monotonicity formulas for parabolic flows on manifolds.
\newblock {\em Comm. Anal. Geom.}, 1(1):127--137, 1993.

\bibitem{HK_meanconvex}
R.~Haslhofer and B.~Kleiner.
\newblock Mean curvature flow of mean convex hypersurfaces.
\newblock {\em Comm. Pure Appl. Math.}, 70(3):511--546, 2017.

\bibitem{HK}
R.~Haslhofer and B.~Kleiner.
\newblock Mean curvature flow with surgery.
\newblock {\em Duke Math. J.}, 166(9):1591--1626, 2017.

\bibitem{Hatcher}
A.~Hatcher.
\newblock A proof of the {S}male conjecture, {${\rm Diff}(S^{3})\simeq {\rm
  O}(4)$}.
\newblock {\em Ann. of Math. (2)}, 117(3):553--607, 1983.

\bibitem{Hui84}
G.~Huisken.
\newblock Flow by mean curvature of convex surfaces into spheres.
\newblock {\em J. Differential Geom.}, 20(1):237--266, 1984.

\bibitem{Hui86}
G.~Huisken.
\newblock Contracting convex hypersurfaces in {R}iemannian manifolds by their
  mean curvature.
\newblock {\em Invent. Math.}, 84(3):463--480, 1986.

\bibitem{Ilmanen}
T.~Ilmanen.
\newblock Elliptic regularization and partial regularity for motion by mean
  curvature.
\newblock {\em Mem. Amer. Math. Soc.}, 108(520):x+90, 1994.

\bibitem{K}
D.~Ketover.
\newblock Genus bounds for min-max minimal surfaces.
\newblock {\em arXiv:1312.2666}, 2013.

\bibitem{KL}
D.~Ketover and Y.~Liokumovich.
\newblock in preparation.

\bibitem{KMN}
D.~Ketover, F.~Marques, and A.~Neves.
\newblock The catenoid estimate and its geometric applications.
\newblock {\em arXiv:1601.04514}, 2016.

\bibitem{LS}
L.~Lusternik and L.~Schnirelmann.
\newblock Topological methods in variational problems and their application to
  the differential geometry of surfaces.
\newblock {\em Uspehi Matem. Nauk (N.S.)}, 2(1(17)):166--217, 1947.

\bibitem{MN}
F.~Marques and A.~Neves.
\newblock Existence of infinitely many minimal hypersurfaces in positive
  {R}icci curvature.
\newblock {\em Invent. Math. (to appear)}.

\bibitem{MN2}
F.~Marques and A.~Neves.
\newblock Rigidity of min-max minimal spheres in three-manifolds.
\newblock {\em Duke Math. J.}, 161(14):2725--2752, 2012.

\bibitem{MN_mult}
F.~Marques and A.~Neves.
\newblock Morse index and multiplicity of min-max minimal hypersurfaces.
\newblock {\em Camb. J. of Math.}, 4(4):463--511, 2016.

\bibitem{SS}
F.~Smith.
\newblock On the existence of embedded minimal 2-spheres in the 3-sphere,
  endowed with an arbitrary {R}iemannian metric.
\newblock {\em Phd thesis, Supervisor: Leon Simon, University of Melbourne},
  1982.

\bibitem{White_posricci}
B.~White.
\newblock The space of minimal submanifolds for varying {R}iemannian metrics.
\newblock {\em Indiana Univ. Math. J.}, 40(1):161--200, 1991.

\bibitem{White_topology}
B.~White.
\newblock The topology of hypersurfaces moving by mean curvature.
\newblock {\em Comm. Anal. Geom.}, 3(1-2):317--333, 1995.

\bibitem{White_size}
B.~White.
\newblock The size of the singular set in mean curvature flow of mean convex
  sets.
\newblock {\em J. Amer. Math. Soc.}, 13(3):665--695, 2000.

\bibitem{White_bumpy}
B.~White.
\newblock On the bumpy metrics theorem for minimal submanifolds.
\newblock {\em arXiv:1503.01803}, 2015.

\bibitem{Yau}
{\relax S.T}.~Yau.
\newblock Nonlinear analysis in geometry.
\newblock {\em L'enseignement mathematique}, 34:109--158, 1987.

\end{thebibliography}

\bibliographystyle{abbrv}

\end{document}